\def\thesection{\arabic{section}}
\def\theequation{\thesection.\arabic{equation}}
\newcommand{\ds} {\displaystyle}
\newcommand{\e}{\epsilon}
\newcommand{\al} {\alpha}
\newcommand{\de} {\delta}
\newcommand{\Om} {\Omega}
\newcommand{\ra} {\rightarrow}
\newcommand{\De} {\Delta}
\newcommand{\la} {\lambda}
\newcommand{\La} {\Lambda}
\newcommand{\noi} {\noindent}
\newcommand{\na} {\nabla}
\newcommand{\mb} {\mathbb}
\newcommand{\mc} {\mathcal}
\newcommand{\I}{\int\limits_}
\def\theequation{\@arabic{\c@section}.\@arabic{\c@equation}}
\def\QED{\hfill {$\square$}\goodbreak \medskip}
\newtheorem{Theorem}{Theorem}[section]
\newtheorem{Lemma}[Theorem]{Lemma}
\newtheorem{Proposition}[Theorem]{Proposition}
\newtheorem{Remark}[Theorem]{Remark}
\newtheorem{Definition}[Theorem]{Definition}
\def\XXint#1#2#3{{\setbox0=\hbox{$#1{#2#3}{\int}$ }
		\vcenter{\hbox{$#2#3$ }}\kern-.6\wd0}}
\begin{document}
	{\vspace{0.01in}
		\title
		{An elliptic problem involving critical Choquard and singular discontinuous nonlinearity}
		\author{ {\bf G.C. Anthal\footnote{	Department of Mathematics, Indian Institute of Technology, Delhi, Hauz Khas, New Delhi-110016, India. e-mail: Gurdevanthal92@gmail.com}\;, J. Giacomoni\footnote{ 	LMAP (UMR E2S UPPA CNRS 5142) Bat. IPRA, Avenue de l'Universit\'{e}, 64013 Pau, France.	  
					email: jacques.giacomoni@univ-pau.fr}  \;and  K. Sreenadh\footnote{Department of Mathematics, Indian Institute of Technology, Delhi, Hauz Khas, New Delhi-110016, India.	e-mail: sreenadh@maths.iitd.ac.in}}}
		\date{}
		
		\maketitle
		     
		\begin{abstract}
		\noi	The present article investigates the existence, multiplicity and regularity of weak solutions of  problems involving a combination of critical Hartree type nonlinearity along with singular and discontinuous nonlinearity (see $(\mc P_\la)$ below). By applying variational methods and using the notion of generalized gradients for Lipschitz continuous functional, we obtain the existence and the multiplicity of weak solutions for some suitable range of $\la$ and $\gamma$. Finally by studying the $L^\infty$-estimates and boundary behaviour of weak solutions, we prove their H\"{o}lder and Sobolev regularity. \\
			
			\noi \textbf{Key words:} Critical Choquard nonlinearity, Hardy-Littlewood-Sobolev inequality, existence results, discontinuous nonlinearites.\\
			
			\noi \textit{2020 Mathematics Subject Classification:} 35J20, 35J60, 35J75.
			
		\end{abstract}	
		\section{Introduction}\label{I}
		In this article, we consider the following elliptic problem involving both critical Choquard and discontinuous and singular nonlinearities. Presicely, we deal with
		\begin{align*}
			(\mc P_\la)		\begin{cases}	-\De u = \la \left( \left( \ds\I\Om \frac{|u|^{2_\mu^\ast}(y)}{|x-y|^\mu}dy\right)|u|^{2_\mu^\ast-2}u+\chi_{\{u<a\}}u^{-\gamma}\right),\\
				u> 0~\text{in}~\Om,~u \equiv 0~\text{on}~\partial \Om,
			\end{cases}
		\end{align*}
		where $\Om$ is a bounded domain of $\mb R^n$ with smooth boundary $\partial \Om$, $\gamma >0$, $n \geq 3$, $a>0$, $\la >0 $, $2_\mu^\ast =(2n-\mu)/(n-2)$, $0<\mu<n$ and $\chi_A$ denotes the characteristic function of a set $A$. \\
	An important obstacle in investigating this class of problems is that the corresponding energy functional is nondifferentiable due to the discontinuous nonlinearity. Therefore, we utilise the idea of generalized gradients as explained in the important work of F. H. Clarke (\cite{Cl}), which was later applied to the setting of partial differential equations by Chang \cite{Ch}.\\
	The occurence of discontinuous nonlinearities arises in the modelling of a number of physical issues, including the obstacle problem, the seepage surface problem, and the Elenbass equation, for further information, see \cite{Ch1, Ch2}. These significant applications have driven a long series of investigations on problems involving such nonlinearities. We mention the pioneering work of Badiale and Tarentello \cite{BT}, where existence and multiplicity results are established in the situation of critical growth and discontinuous nonlinearities in $\mb R^n$ with $n \geq 3$. We also quote further papers that consider different varieties of diffusion operators and nonlinearities, see \cite{Alb, DPST, DPST1, HaS}.\\
		The problems involving Choquard type nonlinearity are widely studied since these problems found their applications in various physical phenomena. First, S. I. Pekar \cite{Pe} used such kind of nonlinearities to describe the quantum mechanics of a polaron at rest  whereas P. Choquard \cite{Li} described the model of an electron trapped in its own hole using such nonlinearity. One of the initial study of problems involving Choquard nonlinearities using variational methods was conducted by E. H. Lieb \cite{Li} wherein he established the existence and uniqueness of a positive radial ground state of the following problem
		\begin{align*}
			-\De u + V_0 u =(I_2 \ast |u|^2)u~\text{in}~\mb R^3,
		\end{align*}
	where $I_\mu(x) =\ds \frac{A_\mu}{|x|^{n-\mu}}$ with $A_\mu =\frac{\Gamma\left(\frac{n-\mu}{2}\right)}{2^\mu \Gamma \left(\frac{\mu}{2}\right)\pi^\frac{n}{2}}$ . Without any attempt to provide the complete list, we refer to \cite{GY, MS4, MS1} and the references therein for the study of Choquard problems using variational methods.\\
	The problems involving singular nonlinearities have a very long history. These type of problems has numerous applications in the physical world, such as in the study of non-Newtonian flows in porous media and heterogeneous catalyst.
	One of the seminal breakthrough in the study of such problems was the work  \cite[Crandall-Rabinowitz-Tartar]{CRT}. By applying the method of sub-supersolutions to the non singular approximated problem and then passing to the limit, the authors proved the existence of a solution to a class of elliptic PDEs involving a singular nonlinearity. Following this pionneering work, a significant amount of study has been conducted on elliptic singular equations about existence and qualitative properties, of solutions. In this regards, we refer the survey articles \cite{GR, HM} and references therein.\\
	The investigation of singular problems in combination with critical growth nonlinearities was pioneered by  \cite[Haitao]{H} wherein the author considered the following problem:
		\begin{align}\label{ecsp}
			-\De u =\la u^{-\gamma} + u^p,~u >0~\text{in}~\Om,~u =0~\text{on}~\partial\Om
		\end{align}
		where $\Om \subset \mb R^n~(n \geq 3)$ is a smooth bounded domain and $\gamma \in (0,1)$, $1<p\leq \frac{n+2}{n-2}$. Using monotone iterations and the mountain pass lemma, the author proved existence and multiplicity results for the maximal range of parameter $\la$ (i.e. global multiplicity). Later in \cite{AG, DPST} the authors studied such problems for the higher singular cases, i.e. with $\gamma \in (1,3)$. Finally, Hirano, Saccon and Shioji in \cite{HSS} handled problem \eqref{ecsp} for any $\gamma >0$ and showed the  existence of $L^1_{\text{loc}}$ solutions $u$ satisfying $(u-\e)^+ \in H^1_0(\Om)$ for all $\e >0$ using variational methods and nonsmooth analysis arguments. We also mention the work \cite{GGS1} where the authors studied a doubly nonlocal critical singular problem in the spirit of \cite{HSS} and obtained the existence, multiplicity and regularity results.\\
		The stirring motivation to consider the problem $(\mc P_\la)$ are the works \cite{DPST} and \cite{DPST1}, where the authors discussed the problem involving critical nolinearities with singular and discontinuous nonlinearities for $n =2$ and $n \geq 3$ respectively. More precisely, in \cite{DPST1} the authors considered the following problem
		\begin{align*}
			\left\{-\De u = \la(\chi_{\{u<a\}}u^{-\gamma}+u^{2^\ast-1}),~u>0~\text{in}~\Om,~u=0~\text{on}~\partial \Om,\right.
		\end{align*}
		for $0 <\gamma<3$, $\la >0$, where $2^\ast =2n/(n-2)$ is the critical Sobolev constant and obtain the existence and multiplicity results for suitable range of $\la$.\\
		Following the above discussion, we considered the problem $(\mc P_\la)$ in the present work. The novelty features of this  work is double  with the presence of a nonlocal and critical Hartree type nonlinearity and  a more singular nonlinearity (by considering higher values of $\gamma>1$ with respect to former contributions). This brings additional technical difficulties and forces to follow a new approximation approach. Based on the notion of weak solutions given in the next section, we prove the following existence and multiplicity result:
		\begin{Theorem}\label{tmr}
			For any $a>0$, there exists $\Lambda^a >0$ such that
			\begin{enumerate}
				\item $(\mc P_\la)$ has no solution for any $\la >\Lambda^a$.
				\item $(\mc P_\la)$
				admits at least one (minimal) solution $v_\la$ for any $\la \in (0,\Lambda^a)$ and $\gamma >0$. 	Moreover for any $\omega \in (\max\{\frac{\gamma+1}{4}, 1\},\infty)$,
				$v_\la^{\omega}  \in H_0^1(\Om)$. In addtion,  $v_\la\in H^1_0(\Omega)$ if and only if $\gamma<3$.
				\item Further if we take $0 <\gamma<3$ and $\mu <\min\{4,n\}$, then $(\mc P_\la)$ admits at least two solutions for any $\la \in (0, \Lambda^a)$.
			\end{enumerate}
		\end{Theorem}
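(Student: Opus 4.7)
The argument splits into the three claims, which I would organize around a regularization of the singular discontinuous term, a monotone iteration scheme for parts (1)--(2), and a translated nonsmooth mountain pass for (3).

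\textbf{Parts (1) and (2).} Let me set $\Lambda^a=\sup\{\lambda>0:(\mathcal P_\lambda)\text{ admits a weak solution}\}$. I would first show the solvability set is a left-interval by propagating supersolutions: any solution at $\lambda$ is a supersolution at $\lambda'<\lambda$. To prove $\Lambda^a<\infty$, I would test a hypothetical solution $u$ against the first Dirichlet eigenfunction $\phi_1>0$: because $u$ vanishes on $\partial\Omega$, the set $\{u<a\}$ has positive measure near $\partial\Omega$, so the right-hand side contributes at least $c(a,\gamma)\,\lambda\int_{\{u<a\}}\phi_1>0$ plus the nonnegative Choquard term; balancing against $\lambda_1(\Omega)\int u\phi_1$ via Young's inequality bounds $\lambda$. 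For the minimal solution when $\lambda\in(0,\Lambda^a)$, I would smooth $\chi_{\{\cdot<a\}}(\cdot)^{-\gamma}$ into a locally Lipschitz $f_\varepsilon$ removing both the discontinuity and the singularity, then solve $(\mathcal P_{\lambda,\varepsilon})$ by Perron's method (a small multiple of the torsion function as subsolution and a solution at some $\lambda'\in(\lambda,\Lambda^a)$ as supersolution) followed by monotone iteration. A uniform $L^\infty$ bound on $v_{\lambda,\varepsilon}$ and monotonicity in $\varepsilon$ yield $v_\lambda$ as the limit. The regularity assertion is obtained by testing the equation with (a truncation of) $v_\lambda^{2\omega-1}$: combined with the Hopf-type boundary behavior $v_\lambda(x)\sim\mathrm{dist}(x,\partial\Omega)^{2/(1+\gamma)}$ (proved by sub/super-solution comparison with the pure singular model), one checks that $\int|\nabla v_\lambda^\omega|^2$ is finite exactly when $\omega>(\gamma+1)/4$; the same asymptotics give the sharp equivalence $v_\lambda\in H^1_0(\Omega) \Longleftrightarrow \gamma<3$.

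\textbf{Part (3).} Assume now $0<\gamma<3$ (so $v_\lambda\in H^1_0$) and $\mu<\min\{4,n\}$. I would look for a second solution in the form $u=v_\lambda+w$ with $w\in H^1_0(\Omega),\ w\geq 0$. The translated problem admits a locally Lipschitz functional $J_\lambda$: the Choquard part is $C^1$ on $H^1_0$, and the singular--discontinuous part is well-defined on $H^1_0$ because, for $\gamma<3$, the difference $\chi_{\{v_\lambda+w<a\}}(v_\lambda+w)^{-\gamma}-\chi_{\{v_\lambda<a\}}v_\lambda^{-\gamma}$ is cancelled near $\partial\Omega$ by the boundary profile of $v_\lambda$. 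The discontinuity of the characteristic function forces the use of Clarke's generalized gradient, in the framework of Chang. I would then verify: mountain pass geometry near $w=0$ (the nonlocal superlinear term dominates along one-dimensional subspaces); a Cerami condition for $J_\lambda$ at levels strictly below the Choquard critical threshold $c^\ast=\tfrac{n+2-\mu}{2(2n-\mu)}S_{H,L}^{(2n-\mu)/(n+2-\mu)}$, using the Brezis--Lieb lemma for the double convolution and Clarke subdifferential calculus on $\{u=a\}$; and the crucial level estimate that the mountain pass value along a Talenti/Choquard bubble path concentrated at an interior point stays strictly below $c^\ast$. A nonsmooth mountain pass theorem then delivers a critical point $w_\lambda\not\equiv 0$, so $u_\lambda=v_\lambda+w_\lambda$ is a second solution.

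\textbf{Main obstacle.} The delicate step is the mountain pass level estimate in part (3). The restriction $\mu<\min\{4,n\}$ is precisely what makes the cross terms between the concentrating bubble and the minimal solution $v_\lambda$ integrable in the Hartree kernel; matching the slow decay $|x-y|^{-\mu}$ against the boundary profile of $v_\lambda$ requires sharp asymptotic expansions in the spirit of Brezis--Nirenberg, now adapted to the nonlocal convolution. A secondary but substantial difficulty is verifying the Cerami condition in presence of both the set-valued Clarke subdifferential concentrated on $\{u=a\}$ and the lack of compactness of the critical Hartree term, which together preclude any direct application of the standard Palais--Smale machinery.
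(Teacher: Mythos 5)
Your overall architecture matches the paper's: regularize the discontinuous singular term, build the minimal solution by sub/supersolutions and a limit in $\varepsilon$, then translate by $v_\lambda$ and run a nonsmooth variational argument with Clarke gradients and a Talenti-bubble level estimate below $\frac{n-\mu+2}{2(2n-\mu)}\,S_{H,L}^{(2n-\mu)/(n-\mu+2)}\lambda^{-(n-2)/(n-\mu+2)}$. But there are concrete gaps. First, your proof that $\Lambda^a<\infty$ does not close: bounding the right-hand side below by $c\,\lambda\int_{\{u<a\}}\phi_1>0$ and comparing with $\lambda_1\int u\phi_1$ only yields $\lambda\le \lambda_1\int u\phi_1\big/\big(c\int_{\{u<a\}}\phi_1\big)$, and both integrals depend on the unknown $u$, so nothing prevents this ratio from growing with $\lambda$. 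The argument that works (and is the one used in the paper) is pointwise: $m t^{2_\mu^*-1}+\chi_{\{t<a\}}t^{-\gamma}\ge Mt$ for all $t>0$ (the singular term handles $t<a$, the superlinear Hartree term handles $t\ge a$), whence $\lambda_1\ge \lambda M$. Relatedly, you never establish $\Lambda^a>0$: your supersolution is ``a solution at some $\lambda'\in(\lambda,\Lambda^a)$'', which is circular for showing the solvable set is nonempty. The paper solves the problem without the jump for small $\lambda$ by monotone iteration between the purely singular solution $w_\lambda$ and $w_\lambda+z$, and observes that the resulting solution has sup-norm below $a$ for $\lambda$ small, so the indicator is identically $1$ and the two problems coincide.

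Second, in part (3) you assume the mountain pass geometry outright; the domination of the nonlocal superlinear term along rays only gives $\mathcal G_\lambda(Rw_\varepsilon)<0$ for large $R$, not a positive infimum on small spheres. Since $0$ is only a non-strict local minimum of the translated functional, the infimum on $\{\|w\|=\kappa\}$ may equal $0$, and the paper must split into a ``zero altitude'' alternative (handled by Ekeland's principle on an annulus, producing a nontrivial critical point at level $0$ via a strong-convergence argument) and the genuine mountain pass case. You also omit the step that converts a Clarke generalized critical point into a weak solution of $(\mathcal P_\lambda)$: the generalized gradient of the discontinuous term is set-valued on the level set $\{v_\lambda+w_\lambda=a\}$, and one must show this set has Lebesgue measure zero (the paper does this by showing $v_\lambda+w_\lambda\in W^{2,q}_{\mathrm{loc}}(\Omega)$, so that $-\Delta(v_\lambda+w_\lambda)=0$ a.e.\ on the level set, contradicting the strict positivity of the right-hand side there); without this, the limit of the Ekeland sequence only solves a differential inclusion. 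You likewise take for granted that $v_\lambda$ is a local minimum of the energy in the $H_0^1$ topology, which is itself a nontrivial Brezis--Nirenberg-type step and is where $\mu<\min\{4,n\}$ first enters. Finally, the boundary profile you quote, $\delta^{2/(1+\gamma)}$, is correct only for $\gamma>1$; for $\gamma<1$ it is $\delta$ and for $\gamma=1$ it is $\delta(-\log\delta)^{1/2}$, which matters for the Sobolev regularity claims in part (2).
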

		We also discuss the boundary behavior and H\"{o}lder regularity results of weak solutions. We have the following result in this direction.
		\begin{Theorem}\label{trr}
			Let $u$ be a weak solution of $(\mc P_\la)$ and $\phi_\gamma$ be given by Definition \ref{d2.3}. Then $u \in L^\infty(\Om)$ and $C_1 \phi_\gamma\leq u\leq C_2 \phi_\gamma$ for some positive constants $C_1$ and $C_2$. Moreover, the following assertions hold:
			\begin{enumerate}[label=(\roman*)]
			\item When $\gamma >1$, $u(x) \in C^\frac{2}{\gamma+1}(\overline{\Om}).$
				\item When $\gamma =1$, $u(x) \in C^\beta(\overline{\Om})$, for all $0<\beta<1$.
				\item When $\gamma <1$, $u(x) \in C^{1,1-\gamma}(\overline{\Om})$.
			\end{enumerate}
		\end{Theorem}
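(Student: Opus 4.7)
The plan is to proceed in three stages: first establish an $L^\infty$ bound on $u$, then determine the precise two-sided boundary behaviour $u\asymp\phi_\gamma$ by comparison arguments, and finally deduce the Hölder regularity up to $\partial\Omega$ by case analysis. For the $L^\infty$ estimate, the singular term $\chi_{\{u<a\}}u^{-\gamma}$ is pointwise bounded by $a^{-\gamma}$, so only the critical Hartree term requires work. I would run a Moser-type iteration tailored to the Choquard nonlinearity: test the equation against $(u\wedge M)^{2\beta+1}$ for truncations $M$, apply the Hardy--Littlewood--Sobolev inequality to control the convolution by products of $L^p$ norms, and iterate the resulting gain along a geometric sequence of exponents, starting from $u\in L^{2^\ast}$ (which is available because $(u-\varepsilon)^+\in H^1_0(\Omega)$ for every $\varepsilon>0$). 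A Brezis--Kato type bootstrap then yields $u\in L^\infty(\Omega)$.

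Once $u$ is bounded, the Hartree term is bounded as well, and near $\partial\Omega$ one has $u<a$, so the equation effectively reads $-\Delta u=\lambda u^{-\gamma}+O(1)$. The upper bound $u\le C_2\phi_\gamma$ then follows from weak comparison with the solution of the purely singular auxiliary problem $-\Delta w=Kw^{-\gamma}$ in $\Omega$, $w=0$ on $\partial\Omega$, for a suitably large $K$; by Definition \ref{d2.3} its boundary profile is precisely $\phi_\gamma$. For the lower bound I would employ a subsolution $C_1\phi_\gamma$ with $C_1$ small, combined with the strict positivity of $u$ on compact subsets coming from a Hopf-type argument. The weak comparison principle used here is the one tailored to singular nonlinearities in the spirit of \cite{CRT,HSS}; to accommodate the regime $\gamma\ge 1$, comparison is first carried out with the admissible truncated test function $(C_2\phi_\gamma-u)^+\wedge N$, which lies in $H^1_0(\Omega)$ by virtue of the integrability of powers of $u$ provided by Theorem \ref{tmr}(2), and then one passes to the limit $N\to\infty$ by monotone convergence.

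With the profile $u\asymp\phi_\gamma$ in hand, write $-\Delta u=g$ where $g$ is bounded on compact subsets of $\Omega$ and $|g(x)|\lesssim d(x)^{-\alpha}$ near $\partial\Omega$, with $d(x)=\mathrm{dist}(x,\partial\Omega)$ and $\alpha=\gamma$ when $\gamma<1$, $\alpha=1$ (up to a logarithmic correction) when $\gamma=1$, and $\alpha=2\gamma/(\gamma+1)\in(1,2)$ when $\gamma>1$. Representation through the Green function of $-\Delta$ and classical Hölder estimates on the corresponding Newton potential then yield $u\in C^{1,1-\gamma}(\overline{\Omega})$ for $\gamma<1$ and $u\in C^\beta(\overline{\Omega})$ for every $\beta<1$ when $\gamma=1$; for $\gamma>1$ a boundary blow-up argument rescaling $u_r(x)=r^{-2/(\gamma+1)}u(x_0+rx)$ at points $x_0\in\overline{\Omega}$, combined with uniform interior Schauder estimates for the rescaled equation (whose right-hand side is bounded precisely because $\phi_\gamma\asymp d^{2/(\gamma+1)}$ makes the scaling critical), delivers the optimal Hölder exponent $2/(\gamma+1)$. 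The main obstacle will be the comparison step in the strongly singular regime $\gamma\ge 1$, since then $u$ may fail to lie in $H^1_0(\Omega)$ and the standard weak comparison principles do not apply verbatim; the truncation/approximation strategy based on the integrability $v_\lambda^\omega\in H^1_0(\Omega)$ from Theorem \ref{tmr}(2), together with a careful monotone passage to the limit, is the key technical ingredient to push the argument through.
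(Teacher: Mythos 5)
Your three-stage architecture ($L^\infty$ bound, two-sided comparison with $\phi_\gamma$, Hölder regularity from the boundary profile) is exactly the paper's, and the proposal is correct; the differences lie in how two of the stages are implemented. For the $L^\infty$ bound the paper also runs a truncated Moser/Brezis--Kato iteration on $(u-1)^+$ with test functions $|u_\tau|^{q-2}u_\tau$, but the device that actually makes the critical Hartree term absorbable is the Moroz--Van Schaftingen interpolation inequality (Lemma \ref{l6.1}), which bounds $\int(|x|^{-\mu}\ast(H|u|^\theta))K|u|^{2-\theta}$ by $\e^2\|\na u\|_{2}^2+M_{\e,\theta}\|u\|_{2}^2$; your ``Brezis--Kato type bootstrap'' is the right idea, but note that a bare Hardy--Littlewood--Sobolev application at the critical exponent yields no smallness, so some such refined inequality (or an explicit small-norm splitting) is the indispensable ingredient. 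For the boundary behaviour the paper squeezes $u$ between $u_{\la,\e}$ (observing that $u$ is a supersolution of the regularized purely singular problem $(\mc S_{\la,\e})$ and invoking the weak comparison principle of Theorem \ref{wcp}, whose hypothesis $(v-\nu)^+\in H_0^1(\Om)$ is met because it is built into the definition of weak solution --- precisely the point you flag for $\gamma\ge 1$) and the solution of $-\De w=\la w^{-\gamma}+K$ with $K$ determined by $|u|_\infty$; your explicit subsolution $C_1\phi_\gamma$ is an equivalent route to the lower bound. Finally, for the Hölder exponents the paper simply cites Gui--Lin \cite[Theorem 1.2]{GL} once $u\sim\phi_\gamma$ is established, whereas you sketch a direct proof (Newton-potential estimates for $\gamma\le 1$, boundary rescaling at the critical exponent $2/(\gamma+1)$ for $\gamma>1$); your sketch is essentially a re-derivation of that cited result, so it is more self-contained but does not change the substance of the argument.
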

		 The problems involving discontinuous nonlinearities but without singular term are tackled using variational techniques and the generalized gradient theory for locally Lipschitz functional. However the presence of singular term makes the associated energy functional neither differentiable nor locally Lipschitz in $H_0^1(\Om)$ which prohibits the use of both techniques directly. In order to overcome these difficulties we first considered the regularized problem $(\mc P_{\la,\e})$ (see Section \ref{s4}). The use of this regularization makes the associated energy functional differentiable and thus allow the use of suitable variational methods.\\
		We begin our analysis by studying the purely singular discontinuous problem $(\mc S_\la)$ (see Section \ref{s3}). To this attempt, we consider the regularized problem $(\mc S_{\la,\e})$. The analysis of $(\mc S_{\la,\e})$ is divided into two cases depending up on the parameter $\gamma$ i.e., when $(a)~0<\gamma<3$ and $(b)~\gamma \geq 3$. For Case $(a)$, we applied Perron's method and show the existence of unique solution to $(\mc S_{\la,\e})$ in $H_0^1(\Om)\cap L^\infty(\Om)$. Concerning Case $(b)$, we make use of monotone methods to obtain the existence of unique solution of $(\mc S_{\la,\e})$. The existence of the minimal weak solution of $(\mc S_\la)$ is then obtained as the limit of solutions of the regularized problem. After studying the purely singular problem, we then show the existence of a weak solution of $(\mc P_\la)$ for suitable range of $\la$ taking advantage of the construction of suitable sub and supersolutions. Under the restriction $\mu <\min\{4,n\}$, this solution is then shown to be the local minimum of the energy functional in $H_0^1(\Om)$ topology. Then the existence of second solution is obtained by investigating the translated problem associated to $(\mc P_\la)$. The associated energy functional is locally Lipschitz which leads to the use of generalized gradients technique. We further employ  Ekeland variational principle and the concentration-compactness principle to get the existence of a second solution. We pointout here that the nonsmooth analysis arguments as performed in \cite{HSS} cannot be used here because of the discontinuous term.\\
		Turning to the structure of the paper, in Section \ref{P} we collect the preliminaries required in the subsequent sections. In Section \ref{s3} we study the purely singular discontinuous problem. In Section \ref{s4}, we obtain the existence of first solution. In Section \ref{s5}, the existence of the second solution in discussed that achieves the proof of Theorem \ref{tmr}. Finally, in Section \ref{s6} we discuss the regularity of the solutions. and prove Theorem \ref{trr}.\\

		\textbf{Notations:} Throughout the paper, we will use the following notations:
		\begin{itemize}
			\item $\de(x):=\text{dist} (x,\partial \Om)$ and $d_\Om =\text{diam}(\Om)$;
			\item We denote positive constants by $M, M_1, M_2, \cdots$;
			\item We denote the standard norm on $L^p(\mb R^n)$ by $|\cdot|_p$;
			\item  for any two functions $g,~h$, we write $g \prec h$ or $g \succ h$ if there exists a constant $C>0$ such that $g \leq Ch$ or $g \geq Ch$. We write $g \sim h$ if $g \prec h$ and $ g \succ h$.
			
		\end{itemize}
		\section{Preliminaries}\label{P}
		In this section we give the functional settings and collect the notations and preliminary results required in the rest of the paper. We first define the notion of a weak solution as follows:
		\begin{Definition}
			We say that $u \in H^1_{\text{loc}}(\Om)$ is a weak solution of $(\mc P_\la)$ if
			\begin{enumerate}
				\item  ess$\inf_K u>0$ for any compact set $K \subset\Om$ 
				\item $(u-\nu)^+ \in H_0^1(\Om)$ for every $\nu >0$.
				\item For any $\psi \in C_c^\infty(\Om)$ it holds
			\begin{equation}\label{e2.3}
				\I\Om \na u\na \psi =\la \I{\Om}\chi_{\{u<a\}}u^{-\gamma}\psi +\la \I\Om\I\Om\frac{u^{2_\mu^\ast}(y)u^{2_\mu^\ast-1}(x)\psi(x)}{|x-y|^\mu}dxdy.
			\end{equation}
		\end{enumerate}
		\end{Definition} 
	\begin{Remark}
		We want to remark that the assumption $(u -\nu)^+ \in H_0^1(\Om)$ for every $\nu >0$ holds if there exists $\ell \geq 1$ such that $u^\ell \in H_0^1(\Om)$. 
	\end{Remark}
		The formal energy functional $J_\la^a(u)$ associated with the problem $(\mc P_\la)$ is given as
		\begin{align*}
			J_\la^a(u)= \frac12 \|u\|^2 -\la \I{\Om}H(u)-\frac{\la}{22_\mu^{\ast}} \I{\Om}\frac{|u(y)|^{2_\mu^\ast}|u(x)|^{2_\mu^\ast}}{|x-y|^\mu}dxdy,
		\end{align*}
	where we take
		\begin{align*}
			H(u)=\begin{cases}
				0~&\text{if}~u \leq 0,\\
				(1-\gamma)^{-1} u^{1-\gamma}~&\text{if}~0<u<\frac{a}{2},\\
				(1-\gamma)^{-1}(a/2)^{1-\gamma}+\I{a/2}^u \chi_{\{t<a\}}t^{-\gamma}dt~&\text{if}~u \geq a/2,
			\end{cases}
		\end{align*}
		for $\gamma>0$, $\gamma \ne 1$ and for $\gamma =1$ we replace the terms of the form $(1-\gamma)^{-1}x^{1-\gamma}$ in the above definition with the term $\log x$ i.e., 
		\begin{align*}
			H(u)=\begin{cases}
				0~&\text{if}~u \leq 0,\\
			\log u~&\text{if}~0<u<\frac{a}{2},\\
			\log(a/2)+\I{a/2}^u \chi_{\{t<a\}}t^{-1}dt~&\text{if}~u \geq a/2.
			\end{cases}
		\end{align*}
		\begin{Definition}\label{d2.3}
			For $0 <\gamma <\infty$ we define $\phi_\gamma$ as follows:
			\begin{align*}
				\phi_\gamma =\begin{cases}
					e_1~&0<\gamma<1,\\
					e_1(-\log e_1)^\frac12~&\gamma=1,\\
					e_1^\frac{2}{\gamma+1}~&1<\gamma,
				\end{cases}
			\end{align*}
		where $e_1$ is the first positive eigenfunction of $-\De$ on $H_0^1(\Om)$ with $|e_1|_\infty $ fixed as a number less than $1$.
		\end{Definition}
		\begin{Remark}\label{r2.4}
			If $0<\gamma<3$, by an application of Hardy's inequaltiy it follows that $u^{-\gamma}\psi \in L^1(\Om)$ if $\psi \in H_0^1(\Om)$ and $u \geq M\phi_\gamma$ in $\Om$, where $M>0$ is a constant. In particular, if $u \geq M \phi_\gamma$, then \eqref{e2.3} holds for all $\psi \in H_0^1(\Om)$.
		\end{Remark}
		Now we recall the Hardy-Littlewood-Sobolev inequality which is the foundation in study of problems involving Choquard nonlinearity:
	\begin{Proposition}\label{hls}
		\textbf{Hardy-Littlewood-Sobolev inequality:} Let $r, q>1$ and $0<\mu<n$ with $1/r+1/q+\mu/n=2$, $g\in L^{r}(\mb R^n), h\in L^q(\mb R^n)$. Then, there exists a sharp constant $C(r,q,n,\mu)$ independent of $g$ and $h$ such  that 
		\begin{equation*}\label{hlse}
			\int\limits_{\mb R^n}\int\limits_{\mb R^n}\frac{g(x)h(y)}{|x-y|^{\mu}}dxdy \leq C(r,q,n,\mu) |g|_r|h|_q.
		\end{equation*}                             
	\end{Proposition}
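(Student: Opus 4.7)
The plan is to prove Hardy--Littlewood--Sobolev by reducing it, through duality, to an $L^r\to L^{q'}$ bound for the Riesz-type operator $If(x):=\int_{\mb R^n} f(y)\,|x-y|^{-\mu}\,dy$, and then to deduce that bound from a weak-type endpoint estimate via Marcinkiewicz interpolation. Concretely, the bilinear form $T(g,h):=\int\int g(x)h(y)|x-y|^{-\mu}\,dx\,dy$ is, by $L^q$--$L^{q'}$ duality, bounded on $L^r\times L^q$ if and only if $I:L^r\to L^{q'}$ is bounded, where a short computation using $1/r+1/q+\mu/n=2$ gives $1/q'=1/r+\mu/n-1$, so that $1/r-1/q'=(n-\mu)/n$ and $I$ behaves like a Riesz potential of order $n-\mu$.

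The analytic heart of the argument is a weak-type estimate: for $f\in L^r(\mb R^n)$ with $|f|_r=1$, one wants
\begin{equation*}
\bigl|\{x\in\mb R^n:|If(x)|>t\}\bigr|\le C\,t^{-q'}.
\end{equation*}
To obtain it I would split the kernel at a threshold $R>0$, writing
\begin{equation*}
If(x)=\int_{|x-y|\le R}\frac{f(y)}{|x-y|^\mu}\,dy+\int_{|x-y|>R}\frac{f(y)}{|x-y|^\mu}\,dy.
\end{equation*}
Decomposing the first integral into dyadic annuli around $x$ yields a bound of the form $CR^{n-\mu}\,Mf(x)$, where $Mf$ is the Hardy--Littlewood maximal function, while H\"older's inequality on the second integral (valid because $\mu r'>n$ throughout the admissible open region, so the truncated kernel lies in $L^{r'}$) gives $CR^{n-\mu-n/r}|f|_r$. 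Optimising in $R=R(x)$ produces the pointwise estimate $|If(x)|\le C\,(Mf(x))^{r/q'}$, from which the desired weak bound follows at once from the classical weak $(r,r)$ boundedness of $M$.

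The second step is then a routine application of Marcinkiewicz interpolation: picking two admissible pairs $(r_0,q_0)$ and $(r_1,q_1)$ on either side of $(r,q)$ along the scaling line $1/r+1/q+\mu/n=2$ gives two weak endpoint bounds for $I$, which interpolate to the strong $L^r\to L^{q'}$ estimate; dualising returns $|T(g,h)|\le C(r,q,n,\mu)|g|_r|h|_q$. The only genuine difficulty in this chain, which I would not attempt to resolve here, is identifying the \emph{sharp} constant $C(r,q,n,\mu)$ claimed in the statement: that requires Lieb's argument via the Riesz rearrangement inequality to reduce to radially symmetric non-increasing $g,h$ and then to pin down the conformal extremizers. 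For the applications in this paper, however, only the existence of \emph{some} constant is needed, so the non-sharp real-variable route above suffices.
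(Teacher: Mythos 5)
The paper does not prove this proposition at all: it is recalled as a classical preliminary (the standard references being Lieb's Annals paper or Lieb--Loss), so there is nothing in the source to compare against line by line. Your proposal is a correct rendition of the classical real-variable proof, and all the exponent arithmetic checks out: the duality reduction gives $1/q'=1/r+\mu/n-1\in(0,1/r)$ precisely because $q>1$ and $\mu<n$; the condition $\mu r'>n$ needed for the far-field H\"older estimate is equivalent to $1/r+\mu/n>1$, i.e.\ to $q>1$, so it does hold throughout the admissible region; and optimising the radius yields the Hedberg-type pointwise bound $|If(x)|\le C\,(Mf(x))^{r/q'}|f|_r^{1-r/q'}$. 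One simplification you missed: once you have that pointwise bound, the Marcinkiewicz step is unnecessary, since the strong $(r,r)$ maximal inequality (valid because $r>1$) gives directly
\begin{equation*}
|If|_{q'}\le C\,\bigl|(Mf)^{r/q'}\bigr|_{q'}\,|f|_r^{1-r/q'}=C\,|Mf|_r^{r/q'}\,|f|_r^{1-r/q'}\le C\,|f|_r,
\end{equation*}
so you pass from the pointwise estimate straight to the strong bound and then dualise. Your closing caveat is also apt: this route produces \emph{some} finite constant (hence, trivially, a best one by taking the infimum), but the explicit sharp value and its extremizers are only known via Lieb's rearrangement argument in the conformally invariant case $r=q$; for every use of the inequality in this paper a non-sharp constant suffices.
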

	In particular, let $g = h = |u|^p$ 
	then by Hardy-Littlewood-Sobolev inequality we see that,
	$$                               \int\limits_{\mb R^n}\int\limits_{\mb R^n}\frac{|u(x)|^pu(y)|^p}{|x-y|^{\mu}}dxdy$$
	is well defined if $|u|^p \in L^\nu(\mb R^n)$ with $\nu =\frac{2n}{2n-\mu}>1$.
	Thus, from Sobolev embedding theorems, we must have
	\begin{equation*}
		\frac{2n-\mu}{n} \leq p \leq \frac{2n-\mu}{n-2}.
	\end{equation*}
	From this, for $u \in L^{2^*}(\mb R^n) $ we have
	$$    \left(\int\limits_{\mb R^n}\int\limits_{\mb R^n}\frac{|u(x)|^{2_\mu^\ast}|u(y)|^{2_\mu^\ast}}{|x-y|^\mu}dxdy \right)^\frac{1}{2_\mu^\ast} \leq C(n,\mu)^\frac{1}{2_\mu^\ast}  |u|_{2^*}^2  .                                     $$
	We fix $S_{H,L}$ to denote the best constant associated to Hardy-Littlewood-Sobolev inequality, i.e,
	\begin{align*}\label{ebc}	S_{H,L}=\inf\limits_{u \in C_0^\infty(\mb R^n)\setminus \{0\}} \frac{\|\nabla u\|_{L^2(\mb R^n)}^2}{\|u\|_{HL}^2}.
	\end{align*}
Now the following lemma plays a crucial role in the sequel:
\begin{Lemma}\label{l2.3}
	The constant $S_{H,L}$ is achieved if and only if 
	\begin{equation*}
		u=C\left(\frac{b}{b^2+|x-d|^2}\right)^\frac{n-2}{2},
	\end{equation*}
	where $C>0$ is a fixed constant, $d  \in \mb R^n$ and $b \in (0,\infty)$ are parameters. Moreover,
	\begin{equation*}
		S= C(n,\mu)^\frac{n-2}{2n-\mu}S_{H,L}.
	\end{equation*}
\end{Lemma}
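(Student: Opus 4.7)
The plan is to derive the identity $S=C(n,\mu)^{(n-2)/(2n-\mu)}S_{H,L}$ and the classification of minimizers by \emph{composing} two sharp inequalities whose extremal profiles are explicitly known: the diagonal Hardy-Littlewood-Sobolev inequality from Proposition~\ref{hls} (in its sharp Lieb form) and the Sobolev inequality in $\mb R^n$ (in its sharp Aubin-Talenti form). The key observation is that the Aubin-Talenti bubbles simultaneously saturate both, so the chain is tight and the two inequalities reinforce each other.

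First, for any $u\in C_c^\infty(\mb R^n)\setminus\{0\}$, I apply Proposition~\ref{hls} with $g=h=|u|^{2_\mu^\ast}$ and $r=q=\frac{2n}{2n-\mu}$. Using $2_\mu^\ast\cdot\frac{2n}{2n-\mu}=2^\ast$ and $\frac{1}{2_\mu^\ast}=\frac{n-2}{2n-\mu}$, one obtains
\begin{equation*}
\|u\|_{HL}^{2}=\left(\I{\mb R^n}\I{\mb R^n}\frac{|u(x)|^{2_\mu^\ast}|u(y)|^{2_\mu^\ast}}{|x-y|^\mu}\,dx\,dy\right)^{\!1/2_\mu^\ast}\leq C(n,\mu)^{(n-2)/(2n-\mu)}\,|u|_{2^\ast}^{2}.
\end{equation*}
Combined with the sharp Sobolev inequality $S|u|_{2^\ast}^2\leq\|\nabla u\|_{L^2(\mb R^n)}^2$ this yields $S_{H,L}\geq S\,C(n,\mu)^{-(n-2)/(2n-\mu)}$. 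To obtain the matching reverse bound, I plug in the Aubin-Talenti bubble $u_{b,d}(x)=C\bigl(b/(b^2+|x-d|^2)\bigr)^{(n-2)/2}$: it saturates the Sobolev inequality by Talenti's classical result, and its $2_\mu^\ast$-th power is, up to a multiplicative constant, $(b^2+|x-d|^2)^{-(2n-\mu)/2}$, which is precisely the (unique, up to symmetries) extremizer of Lieb's sharp diagonal HLS inequality. Thus both steps are equalities along this family, giving the identity in the lemma and showing that $S_{H,L}$ is attained.

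For the converse, suppose $u$ attains $S_{H,L}$. Equality must hold at each step of the composite chain, so Talenti's rigidity for the Sobolev inequality forces $u$ to be an Aubin-Talenti bubble of the stated form, and the density $|u|^{2_\mu^\ast}$ is then automatically compatible with Lieb's equality case. The main obstacle is precisely this rigidity step: it requires invoking the sharp forms of both HLS and Sobolev together with the full classification of their extremizers, and one must also verify that the three parameters $C>0$, $b>0$, $d\in\mb R^n$ reflect exactly the scaling, dilation and translation invariances of the quotient $\|\nabla u\|_{L^2(\mb R^n)}^2/\|u\|_{HL}^2$, so that the minimizing manifold is exhausted by the family in the statement.
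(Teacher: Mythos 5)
Your argument is correct and is the standard proof of this classification: compose the sharp diagonal Hardy--Littlewood--Sobolev inequality with the sharp Sobolev inequality, observe that the Aubin--Talenti bubbles saturate both links of the chain (which yields the identity $S=C(n,\mu)^{\frac{n-2}{2n-\mu}}S_{H,L}$ and attainment), and then use the rigidity of the Sobolev extremizers for the converse. The paper states this lemma without proof, simply recalling it from the Choquard literature right after displaying the estimate $\|u\|_{HL}^2\le C(n,\mu)^{\frac{1}{2_\mu^\ast}}|u|_{2^\ast}^2$, which is exactly the first step of your chain; so your proposal supplies precisely the omitted (standard) argument, the only routine caveat being that attainment must be understood in the completion $\mathcal{D}^{1,2}(\mb R^n)$ of $C_0^\infty(\mb R^n)$ rather than in $C_0^\infty(\mb R^n)$ itself.
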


			\section{The purely singular Discontinuous problem}\label{s3}
		In order to prove the existence results for $( P_\la)$, we translate the problem by the minimal solution to the purely singular problem:
		\begin{equation*}
			( \mc S_\la)\begin{cases}\mc  -\De u =\la\chi_{\{u<a\}}u^{-\gamma},~u>0~\text{in}~\Om,~u=0~\text{on}~\partial \Om.
			\end{cases}
		\end{equation*}
		We first study the existence of weak solutions to $(\mc S_\la)$. We have the following result in this direction
		\begin{Proposition}\label{p3.1}
			There exists a weak minimal solution $u_\lambda$ of $(\mc S_\la)$ for any $\gamma >0$. Furthermore, we have $u_\la \sim \phi_\gamma$ near $\partial \Om$. Moreover regularity results as stated in Theorem \ref{trr} hold.
		\end{Proposition}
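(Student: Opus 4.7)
The plan is to construct $u_\la$ as the monotone limit of solutions to a regularized problem $(\mc S_{\la,\e})$ in which both the singularity $u^{-\ga}$ and the discontinuity at $\{u=a\}$ are smoothed out. For a smooth non-increasing cut-off $\theta_\e:\mb R\to[0,1]$ with $\theta_\e\to\chi_{(-\infty,a)}$ pointwise, the natural approximation is
\begin{equation*}
-\De u = \la\,\theta_\e(u)\,(u+\e)^{-\ga}\quad\text{in }\Om,\qquad u=0\text{ on }\pa\Om.
\end{equation*}
The right-hand side is then bounded and Lipschitz in $u$, so the corresponding energy functional is $C^1$ on $H_0^1(\Om)$ and classical machinery applies.

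Existence of $u_{\la,\e}$ splits along the threshold $\ga=3$. For $0<\ga<3$, I would apply Perron's method using a small multiple of $\phi_\ga$ as subsolution and a large multiple of the torsion function as supersolution; a monotone iteration between them yields a unique $u_{\la,\e}\in H_0^1(\Om)\cap L^\infty(\Om)$, with uniqueness following because $s\mapsto\la\theta_\e(s)(s+\e)^{-\ga}/s$ is strictly decreasing (a standard Diaz--Saa type comparison). When $\ga\geq 3$ the natural subsolution $\phi_\ga\sim e_1^{2/(\ga+1)}$ no longer lies in $H_0^1(\Om)$, so I would bypass the variational framework and run a pure monotone iteration directly in $L^\infty(\Om)$, using elliptic $L^\infty$ and interior $C^{1,\al}$ estimates at each step. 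In both cases the comparison principle gives monotonicity of $\e\mapsto u_{\la,\e}$ (non-increasing) and a uniform upper bound by the solution of $-\De v=\la v^{-\ga}$.

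The decisive $\e$-uniform estimate is the two-sided bound $C_1\phi_\ga\le u_{\la,\e}\le C_2\phi_\ga$ near $\pa\Om$, obtained via barrier arguments using $-\De\phi_\ga\sim\phi_\ga^{-\ga}$ in a collar of $\pa\Om$, which is direct from the explicit form in Definition \ref{d2.3}. Setting $u_\la:=\lim_{\e\to 0^+}u_{\la,\e}$, the requirement $(u_\la-\nu)^+\in H_0^1(\Om)$ for every $\nu>0$ follows by testing $(\mc S_{\la,\e})$ against $(u_{\la,\e}-\nu)^+$: on the set $\{u_{\la,\e}>\nu\}$ the regularized nonlinearity is uniformly bounded, producing an $\e$-uniform $H^1$ bound on this truncation which survives the limit. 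To pass to the weak formulation against $\psi\in C_c^\infty(\Om)$ I would note that on the compact $\mathrm{supp}(\psi)\subset\subset\Om$ the lower bound gives $u_{\la,\e}\geq c_\psi>0$ uniformly, so dominated convergence applies.

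The main obstacle is the passage to the limit through the discontinuous characteristic function on the level set $\{u_\la=a\}$. Stampacchia's lemma yields $\na u_\la=0$ and $-\De u_\la=0$ a.e.\ on this set, so its contribution to either side of the equation vanishes and the weak formulation of Definition 2.1 holds irrespective of how $\chi_{\{u<a\}}$ is interpreted at the threshold. Minimality is standard: for any other weak solution $v$, the test function $(u_{\la,\e}-v)^+$ (after truncation when $\ga\geq 3$) combined with monotonicity of the regularized nonlinearity in $u$ yields $u_{\la,\e}\le v$ for every $\e$, whence $u_\la\le v$. The H\"older and Sobolev regularity assertions of Theorem \ref{trr} follow from the two-sided bound $u_\la\sim\phi_\ga$ together with classical interior and boundary Schauder estimates, as detailed in Section \ref{s6}.
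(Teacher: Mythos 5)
Your overall strategy coincides with the paper's: regularize, produce the approximate solutions by Perron's method for $0<\gamma<3$ and by monotone methods for $\gamma\geq 3$, extract $\e$-uniform estimates, pass to the limit, and get minimality from a comparison principle and the regularity from the boundary behaviour together with \cite{GL}. The one structural difference is that you smooth the singularity and the discontinuity with a single parameter, replacing $u^{-\gamma}$ by $(u+\e)^{-\gamma}$, whereas the paper keeps the genuine singularity in $(\mc S_{\la,\e})$ (regularizing only $\chi_{\{u<a\}}$ by $\chi_\e(u-a)$) and, for $\gamma\geq 3$ only, introduces a second parameter $k$ via $(u+\tfrac1k)^{-\gamma}$. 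This difference is not cosmetic: it is exactly where your argument has a gap. You call the two-sided bound $C_1\phi_\gamma\le u_{\la,\e}\le C_2\phi_\gamma$ ``the decisive $\e$-uniform estimate'' and claim it by a barrier argument using $-\De\phi_\gamma\sim\phi_\gamma^{-\gamma}$. But for $\gamma>1$ and fixed $\e>0$, $C_1\phi_\gamma=C_1e_1^{2/(\gamma+1)}$ is \emph{not} a subsolution of your regularized equation near $\partial\Om$: there $-\De(C_1\phi_\gamma)\ge c\,C_1\phi_\gamma^{-\gamma}\to\infty$ (the term $\beta(1-\beta)e_1^{\beta-2}|\na e_1|^2$ with $\beta=2/(\gamma+1)$ dominates, using Hopf's lemma), while your right-hand side is capped by $\la\e^{-\gamma}$. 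So the lower bound of order $\phi_\gamma$ cannot be obtained uniformly in $\e$ at the level of the approximations; it must be derived a posteriori for the limit $u_\la$, which does satisfy the genuinely singular equation in a collar of $\partial\Om$ (where $u_\la<a$), or by using $\e$-dependent barriers together with the monotonicity of $\e\mapsto u_{\la,\e}$. The paper avoids this entirely because its $u_{\la,\e}$ retains the true singularity $u^{-\gamma}$, for which $C_1\phi_\gamma$ is a legitimate uniform lower barrier. Note that for passing to the limit in the weak formulation only the interior bound $u_{\la,\e}\ge c_K>0$ on compacts is needed, and that part of your argument is fine.

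A second, smaller issue is the minimality step. Testing with $(u_{\la,\e}-v)^+$ against an arbitrary weak solution $v$ is not immediately licit, because $v$ is only in $H^1_{\mathrm{loc}}(\Om)$ with $(v-\nu)^+\in H^1_0(\Om)$, so $(u_{\la,\e}-v)^+$ is not obviously an admissible test function for either equation; this is precisely why the paper proves a separate weak comparison principle (Lemma \ref{l3.4} and Theorem \ref{wcp}, following the Canino--Montoro--Sciunzi--Squassina truncation scheme with the functions $G_k$ and the cutoffs $K_\tau$). Your parenthetical ``after truncation when $\gamma\ge3$'' points in the right direction but does not supply the argument; some version of that machinery is needed for all $\gamma$, since minimality must hold against every weak solution in the sense of Definition 2.1. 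With these two points repaired, the remainder of your outline (uniform $L^\infty$ bound, $(u_\la-\nu)^+\in H^1_0(\Om)$ via testing with $(u_{\la,\e}-\nu)^+$, the treatment of the level set $\{u_\la=a\}$, and the regularity via $u_\la\sim\phi_\gamma$ and \cite{GL}) matches the paper's proof.
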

		The main idea is to solve an approximating regular problem that admits a unique solution which is a strict subsolution to $(\mc S_\la)$ and then pass through the limit. The approximating regular problem is obtained by replacing $\chi_{\{u <a\}}$ with the continuous function $\chi_\e(u-a)$ where 
		\begin{equation}\label{eq3.2}
			\chi_\e(t) =\chi_{(-\infty,-\e)}(t)-t\e^{-1}\chi_{[-\e,0)}(t),~t \in \mb R.
		\end{equation}
		So we consider the following problem
		\begin{align*}
			(\mc S_{\la,\e})	\begin{cases}	-\De v =\la\chi_\e(v-a)v^{-\gamma}~\text{in}~\Om,\\
				v \equiv 0~\text{on}~\partial\Om,~v> 0~\text{in}~\Om.
			\end{cases}
		\end{align*}
		
		We have the following existence results concerning $(\mc S_{\la,\e})$.
		\begin{Proposition}\label{p3.2} We have
			\begin{enumerate}[label=(\arabic*)]
				\item Let $ 0<\gamma <3$. Then there exists an $\e_0=\e_0(a)$ such that for $0<\e <\e_0$ there exists a unique solution $ u_{\la,\e} \in H_0^1(\Om) \cap L^\infty(\Om)$ of the problem $(\mc S_{\la,\e})$. Also the map $\e \ra u_{\la,\e}$ is nonincreasing. Moreover, we have $u_{\la,\e} \sim \phi_\gamma$ for any $\e \in (0,\e_0)$.
				\item Let $\gamma \geq 3$. Then there exists unique solution $u_{\la,\e} \in H^1_{\text{loc}}(\Om )$ of $(\mc S_{\la,\e}) $ such that for any compact $K \Subset \Om$, there exists $M(K)>0$ which satisfies $u_{\la,\e} \geq M(K)>0$ a.e. in $K$. Moreover 
					\begin{align}\label{3.3}
						u_{\la,\e}~\text{is uniformly bounded in}~L^\infty(\Om),
					\end{align}
					and 
				\begin{align}\label{equ3.2}
					u_{\la,\e}^{\ell}~\text{is uniformly bounded in}~H_0^1(\Om)~\text{with}~\ell> \frac{\gamma+1}{4}.
				\end{align}
			\end{enumerate}
			Also the map $\e \ra u_{\la,\e}$ is nonincreasing and $u_{\la,\e} \sim \phi_\gamma$ independent of $\e$.
		\end{Proposition}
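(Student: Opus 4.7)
My plan is to treat the two regimes of $\gamma$ separately, as suggested in the outline following Proposition \ref{p3.1}. A super-solution convenient in both cases is any constant $\bar u\equiv M\ge a$, since $\chi_\e(\bar u-a)=0$ gives $-\De\bar u=0=\la\chi_\e(\bar u-a)\bar u^{-\gamma}$. A natural sub-solution is $\underline u=k u_0$, where $u_0$ is the unique positive solution of $-\De u_0=u_0^{-\gamma}$ in $\Om$ with zero boundary data (a classical singular problem, cf.\ \cite{CRT}), which lies in $L^\infty(\Om)$ and satisfies $u_0\sim\phi_\gamma$. Picking $k$ small enough that $k|u_0|_\infty<a-\e$ forces $\chi_\e(\underline u-a)\equiv 1$, and the sub-solution inequality reduces to $k^{\gamma+1}\le\la$; this compatibility condition fixes $\e_0=\e_0(a)$.

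For Case (1), $0<\gamma<3$, Perron's method applied to this ordered pair yields a solution $u_{\la,\e}\in H_0^1(\Om)\cap L^\infty(\Om)$. Uniqueness follows from the (easily verified) fact that $s\mapsto\la\chi_\e(s-a)s^{-\gamma}$ is non-increasing on $(0,\infty)$, by case analysis on $(0,a-\e]$, $[a-\e,a]$, $[a,\infty)$; testing the difference equation of two solutions against $(u_1-u_2)^+\in H_0^1(\Om)$ then forces $u_1\equiv u_2$. For Case (2), $\gamma\ge 3$, solutions of the purely singular problem do not lie in $H_0^1(\Om)$, so I would build $u_{\la,\e}$ by a monotone iteration started from $u_0=\underline u$ (possibly supplemented by an auxiliary linear term to make the iterates monotone); the bound $u_n\ge\underline u\succ\phi_\gamma$ ensures that the right-hand side of each step equation $-\De u_{n+1}=\la\chi_\e(u_n-a)u_n^{-\gamma}$ is bounded on every $K\Subset\Om$, so $u_{n+1}\in H^1_{\text{loc}}(\Om)$ is well defined; the monotone limit is the required solution, and unique by the argument of Case (1). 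In both cases, monotonicity in $\e$ comes from the pointwise relation $\chi_{\e_1}\ge\chi_{\e_2}$ for $\e_1\le\e_2$ combined with the comparison principle; the equivalence $u_{\la,\e}\sim\phi_\gamma$ uniformly in $\e$ is obtained by bracketing $u_{\la,\e}$ between $\underline u$ (giving the lower bound) and a boundary super-solution built from a multiple of $\phi_\gamma$ (giving the upper bound).

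The most delicate step is the uniform estimate \eqref{equ3.2}. The uniform $L^\infty$ bound \eqref{3.3} is immediate from the constant super-solution. For \eqref{equ3.2}, I would test the equation with $u_{\la,\e}^{2\ell-1}$ (using a standard truncation argument to ensure admissibility) to obtain
\[
\frac{2\ell-1}{\ell^2}\int_\Om |\na u_{\la,\e}^\ell|^2 \;=\; \la\int_\Om \chi_\e(u_{\la,\e}-a)\,u_{\la,\e}^{2\ell-1-\gamma}.
\]
On the support of $\chi_\e(u_{\la,\e}-a)$ one has $u_{\la,\e}<a$, so the integrand is controlled by $u_{\la,\e}^{2\ell-1-\gamma}\chi_{\{u_{\la,\e}<a\}}$; combining with the uniform lower bound $u_{\la,\e}\succ\phi_\gamma\sim\de(x)^{2/(\gamma+1)}$ near $\partial\Om$, the right-hand side is finite uniformly in $\e$ precisely when $2(2\ell-1-\gamma)/(\gamma+1)>-1$, i.e.\ $\ell>(\gamma+1)/4$. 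The main obstacle is to guarantee that $u_{\la,\e}\succ\phi_\gamma$ holds with a constant independent of $\e$; this requires the sub-solution $\underline u$ to be $\e$-independent (which is the case for $\e<\e_0(a)$ in the construction above) and a comparison principle argument adapted to the singular nonlinearity, together with the passage to the limit in the truncated test function.
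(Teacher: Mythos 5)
Your Case (1) is essentially the paper's argument: an ordered sub/supersolution pair fed into the variational Perron method, with uniqueness from the monotonicity of $s\mapsto\chi_\e(s-a)s^{-\gamma}$ and $\e$-monotonicity from comparison. The choices differ cosmetically (the paper takes $\underline u=\theta e_1$ and $\overline u=\hat u$, the solution of the purely singular problem with $\chi$ replaced by $1$; your constant supersolution is not in $H_0^1(\Om)$ as Lemma \ref{l3.3} literally requires, but the method adapts, and it even gives the clean bound $u_{\la,\e}\le a$). Your computation for \eqref{equ3.2} — testing with $u^{2\ell-1}$ and using $u\succ\de^{2/(\gamma+1)}$ to get integrability exactly when $\ell>(\gamma+1)/4$ — is the paper's computation.

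The genuine gaps are in Case (2), $\gamma\ge 3$. First, the construction: the map $s\mapsto\la\chi_\e(s-a)s^{-\gamma}$ is \emph{nonincreasing}, so the iteration $-\De u_{n+1}=\la\chi_\e(u_n-a)u_n^{-\gamma}$ is order-reversing, not order-preserving; the iterates alternate ($u_0\le u_1$, $u_2\le u_1$, \dots) and the even and odd subsequences may converge to a period-two pair rather than a fixed point. The usual fix of adding $Cs$ to both sides cannot restore monotonicity here, because $s\mapsto \chi_\e(s-a)s^{-\gamma}+Cs$ fails to be nondecreasing near $s=0$ for every finite $C$. Moreover each linear step has a right-hand side behaving like $\de^{-2\gamma/(\gamma+1)}$, which for $\gamma\ge3$ is not in $H^{-1}(\Om)$, so even the single-step problem with zero boundary data needs a notion of very weak solution. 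The paper avoids both issues by regularizing the singularity, solving $(\mc S_{\la,\e,k})$ with $(u+\tfrac1k)^{-\gamma}$ in $H_0^1\cap C^{1,\al}$, obtaining a sequence monotone in $k$ with uniform local $H^1$ and power estimates, and passing to the limit. Second, uniqueness: for $\gamma\ge3$ the solutions are only in $H^1_{\mathrm{loc}}(\Om)$, so $(u_1-u_2)^+$ is not an admissible test function and "the argument of Case (1)" does not transfer. The paper needs the full weak comparison principle (Theorem \ref{wcp}), built from the obstacle-problem construction of Lemma \ref{l3.4} \`a la Canino--Montoro--Sciunzi--Squassina, together with the verification that $(u_{\la,\e}-\nu)^+\in H_0^1(\Om)$ via the bound on $u_{\la,\e}^{\omega}$. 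The same comparison principle is also what justifies your claim that the $\e$-independent subsolution bounds $u_{\la,\e}$ from below when $\gamma\ge3$, so it cannot be bypassed.
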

		We will consider the above two cases separately. First we consider the case when $0<\gamma<3$. In this case, the proof
		is based on the classical sub-supersolution method in a variational setting called Perron's method. In this direction,
		inspired by Haitao \cite{H}, we establish the following result.
		\begin{Lemma}\label{l3.3}
			Let $0<\epsilon<\frac{a}{2}$. Suppose that $\underline{u},~\overline{u} \in H_0^1(\Om) \cap L^\infty(\Om)$ are weak subsolution and supersolution of $(\mc S_{\la,\e})$ respectively, such that $0 <\underline{u} \leq \overline{u}$ in $\Om$ and $\underline{u} \geq M(K)>0$ for every $K \Subset \Om$, for some constant $M(K)$. Then there exists a solution $u \in H_0^1(\Om) \cap L^\infty(\Om)$ of $(\mc S_{\la,\e})$ satisfying $\underline{u} \leq u\leq \overline{u}$ in $\Om$.
		\end{Lemma}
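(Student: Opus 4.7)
The plan is to apply Perron's method in a variational setting: construct a candidate solution as the minimizer of a suitably truncated energy functional, and then use comparison arguments of Stampacchia type to show that the minimizer is pinched between $\underline u$ and $\overline u$, thereby deactivating the truncation so that the candidate solves $(\mc S_{\la,\e})$ in its original form.

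Concretely, I would define the truncation operator $T(x,t) := \max\{\underline u(x),\,\min\{t,\overline u(x)\}\}$, the Carath\'eodory nonlinearity $g(x,t) := \la\,\chi_\e(T(x,t)-a)\,T(x,t)^{-\gamma}$, and the truncated functional
\[
\widetilde J(v) := \tfrac12\I\Om |\na v|^2\,dx - \I\Om G(x,v)\,dx,\qquad G(x,t) := \I0^t g(x,s)\,ds.
\]
Since $\chi_\e$ and $t \mapsto t^{-\gamma}$ are both decreasing on $(0,\infty)$ and $T(x,t) \ge \underline u(x)$ in $\Om$, one has the pointwise domination $0 \le g(x,t) \le \la\,\chi_\e(\underline u-a)\,\underline u^{-\gamma}$. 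The subsolution inequality for $\underline u$, tested against $\underline u$ itself, forces this dominant function into $L^1(\Om)$, whence $\widetilde J$ is well-defined, coercive, continuous, and sequentially weakly lower semicontinuous on $H_0^1(\Om)$; a direct method argument then produces a global minimum $u \in H_0^1(\Om)$ satisfying the Euler--Lagrange equation $-\De u = g(x,u)$ in the weak sense.

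Next I would prove the sandwich $\underline u \le u \le \overline u$ a.e.\ in $\Om$ via the standard Stampacchia test. Choosing $(\underline u - u)^+ \in H_0^1(\Om)$ as a test function in the difference of the weak equation for $u$ and the subsolution inequality for $\underline u$, and observing that on $\{\underline u > u\}$ the truncation gives $T(x,u) = \underline u$ and hence $g(x,u) = \la\chi_\e(\underline u - a)\underline u^{-\gamma}$, the right-hand contributions cancel and one is left with $\I\Om |\na(\underline u - u)^+|^2\,dx \le 0$, forcing $(\underline u - u)^+ \equiv 0$. The symmetric argument using $(u - \overline u)^+$ and the supersolution inequality yields $u \le \overline u$. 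With the sandwich established, $T(x,u) \equiv u$, the truncation deactivates, and $u$ is a weak solution of $(\mc S_{\la,\e})$; the $L^\infty(\Om)$ bound is immediate from $u \le \overline u \in L^\infty(\Om)$.

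The main obstacle I anticipate is verifying the well-definedness and coercivity of $\widetilde J$ when $\gamma$ is close to $3$, because $\underline u^{-\gamma}$ then exhibits a strong boundary singularity and belongs to no standard Lebesgue space on $\Om$. The key trick is to avoid all pointwise or $L^p$ bounds on $\underline u^{-\gamma}$ and exploit instead that the existence of $\underline u$ as a subsolution in $H_0^1(\Om)\cap L^\infty(\Om)$ is precisely what guarantees the weighted combination $\chi_\e(\underline u - a)\underline u^{1-\gamma}$ lies in $L^1(\Om)$; this supplies the integral that controls the primitive $G$ and makes the whole variational framework operational.
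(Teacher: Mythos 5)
Your architecture (truncate the nonlinearity at $\underline u$ and $\overline u$, minimize the truncated functional globally on $H_0^1(\Om)$, then deactivate the truncation by a Stampacchia comparison) is a legitimate alternative to the paper's route, which instead minimizes the untruncated functional over the order interval $\mc M=\{\underline u\le v\le\overline u\}$ and recovers the Euler--Lagrange equation from the resulting variational inequality via the perturbations $\varphi^{\kappa}$, $\varphi_{\kappa}$. The comparison step and the deactivation of the truncation in your plan are fine. The gap sits exactly at the point you single out as the ``key trick.''

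A weak subsolution satisfies $\I\Om\na\underline u\na\psi\,dx\le\la\I\Om\chi_\e(\underline u-a)\underline u^{-\gamma}\psi\,dx$ for $\psi\ge0$: this bounds the singular integral from \emph{below}, not from above. Testing it with $\psi=\underline u$ gives $\la\I\Om\chi_\e(\underline u-a)\underline u^{1-\gamma}dx\ge\|\underline u\|^2$, which is perfectly consistent with the left-hand side being $+\infty$; it is the \emph{super}solution inequality that yields an upper bound, namely $\la\I\Om\chi_\e(\overline u-a)\overline u^{1-\gamma}dx\le\|\overline u\|^2$, and for $\gamma>1$ the map $s\mapsto\chi_\e(s-a)s^{1-\gamma}$ is decreasing, so that bound controls the wrong function. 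Moreover, even granting $\chi_\e(\underline u-a)\underline u^{1-\gamma}\in L^1(\Om)$, this only controls $G(x,\underline u)$; since you minimize over all of $H_0^1(\Om)$, well-definedness, coercivity and differentiability of $\widetilde J$ require $\I\Om\chi_\e(\underline u-a)\underline u^{-\gamma}|v|\,dx<\infty$ for \emph{every} $v\in H_0^1(\Om)$, i.e.\ that the dominating function lies in $H^{-1}(\Om)$. By Hardy's inequality this amounts to $\chi_\e(\underline u-a)\underline u^{-\gamma}\de\in L^2(\Om)$, which is where a quantitative boundary lower bound on $\underline u$ and the restriction $\gamma<3$ must enter (e.g.\ $\underline u\succ\de^{2/(\gamma+1)}$ gives $\underline u^{-\gamma}\de\sim\de^{(1-\gamma)/(1+\gamma)}\in L^2$ precisely when $\gamma<3$); neither is a hypothesis of the lemma nor a consequence of the sub/supersolution inequalities, and the same pairing issue reappears when you test the subsolution inequality with $(\underline u-u)^+\in H_0^1(\Om)$. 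The paper sidesteps part of this by minimizing only over the order interval, where every competitor $v$ satisfies $v\ge\underline u$ and hence $|H_\e(v)|\le C+|(1-\gamma)^{-1}|\,\underline u^{1-\gamma}$, so that only $\underline u^{1-\gamma}\in L^1(\Om)$ is needed. To repair your argument, either add the boundary behaviour of $\underline u$ explicitly and invoke Hardy's inequality, or restrict the minimization to the order interval as the paper does.
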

		\begin{proof}
			For sake of clarity, we give only the proof for $\gamma\neq 1$. We introduce the energy functional associated to $(\mc S_{\la,\e})$ :
			\begin{equation*}
				S_\la^{a,\e}(u)=\frac12 \|u\|^2 -\la\I{\Om}H_{\e}(u)dx,
			\end{equation*}
			where 
			\begin{align}\label{3.4}
				H_\e(t)=\begin{cases}
					0~&\text{if}~u \leq 0,\\
					(1-\gamma)^{-1} t^{1-\gamma}~&\text{if}~0<t<\frac{a}{2},\\
					(1-\gamma)^{-1}(a/2)^{1-\gamma}+\I{a/2}^t \chi_\e(s-a)s^{-\gamma}dt~&\text{if}~t \geq a/2.
				\end{cases}
			\end{align}
			Observe that the map $\chi_\e(t)$ lies in $[0,1]$, is continuous and nonincreasing. Also we see that the map $\chi_\e(t)t^{-\gamma}$ is nonincreasing. Concerning $H_\e(t)$, we observe that $ H_\e(t) \leq (1-\gamma)^{-1  } t^{1-\gamma}$ for $t>0$. Now let us consider the conical shell set defined as:
			\begin{align*}
				\mc M =\{v \in H_0^1(\Om) : \underline{u} \leq v \leq \overline{u}~\text{in}~\Om\}.
			\end{align*}
			Clearly, $ \mc M \ne \emptyset$ is closed and convex. Therefore $S_\la^{a,\e}$ is weakly sequentially lower semicontinuous over $\mc M$. Indeed, it is enough to show that $S_\la^{a,\e}$ is sequentially lower semicontinuous. Let $\{v_k\}_{k \in \mb N} \subset \mc M$ be such that $v_k \to v$ in $H_0^1(\Om)$. Now observe that
			\begin{align*}
				|H_\e(v_k)|  \leq \begin{cases}
					\left((1-\gamma)^{-1}(a/2)^{1-\gamma}\right)^- + |(1-\gamma)^{-1}|\underline{u}^{1-\gamma}~&\text{if}~\gamma> 1,\\
					\left((1-\gamma)^{-1}(a/2)^{1-\gamma}\right)^- +|(1-\gamma)^{-1}|\overline{u}^{1-\gamma}~&\text{if}~0<\gamma<1.
				\end{cases}
			\end{align*}
			Using the fact that $ 0 <\underline{u} \leq \overline{u} \in H_0^1(\Om) \cap L^\infty(\Om)$ and $\Om$ is bounded, we conclude that sequence $|H_\e(v_k)|$ is bounded by a $L^1(\Om)$ function (thanks to $\gamma<3$). Thus by using the dominated convergence theorem and the continuity of the norm, we obtain the required claim. Therefore there exists $u \in \mc M$ such that 
			\begin{equation*}
				S_\la^{\e,a}(u) =\inf_{v\in \mc M} S_\la^{\e,a}(v).
			\end{equation*}
			Next, we show that $u$ is the required weak solution of 
			$(\mc S_{\la,\e})$. For this, let $\varphi \in C_c^\infty(\Om)$ and $\kappa >0$. We define
			\begin{align*}
				\eta_{\kappa } =\begin{cases}
					\overline{u}~&\text{if}~u+\kappa \varphi \geq \overline{u},\\
					u +\kappa \varphi~&\text{if}~\underline{u} \leq u+\kappa  \varphi \leq \overline{u},\\
					\underline{u}~&\text{if}~u+\kappa  \varphi \leq \underline{u}.
				\end{cases}
			\end{align*}
			Observe that $ \eta_{\kappa }= u+\kappa \varphi -\varphi^{\kappa } +\varphi_{\kappa } \in \mc M$, where $ \varphi^{\kappa } =(u+\kappa  \varphi -\overline{u})^+$ and $\varphi_{\kappa }=(u +\kappa  \varphi -\underline{u})^-$. Since $u$ is a minimizer of $S_\la^{\e,a}$ over $\mc M$, we have
			\begin{align}\nonumber \label{eq3.4}
				0 \leq& \lim_{t \ra 0} \frac{S_\la^{\e,a}(u+t(\eta_{\kappa }-u))-S_\la^{\e,a}(u)}{t}\\
				=& \I\Om \na u\na (\eta_{\kappa}-u)dx -  \la\I{\Om} (\eta_{\kappa } -u)\chi_\e(u-a)u^{-\gamma}dx.
			\end{align}
			Using the definition of $\eta_{\kappa }$, $\varphi_{\kappa }$ and $\varphi^{\kappa}$, from \eqref{eq3.4}, we have
			\begin{align}\label{e3.5}
				\I\Om \na u\na \varphi dx - \la\I{\Om}\chi_\e(u-a)u^{-\gamma}\varphi dx\geq \frac{1}{\kappa}(\mc G^{\kappa} -\mc G_{\kappa }),
			\end{align}
			where 
			\begin{align*}
				\mc G^{\kappa } =\I\Om \na u\na \varphi^{\kappa} dx - \la\I{\Om}\chi_\e(u-a)u^{-\gamma}\varphi^{\kappa } dx
			\end{align*}
			and
			\begin{align*}
				\mc G_{\kappa } =\I\Om \na u\na \varphi_{\kappa } dx  - \la\I{\Om}\chi_\e(u-a)u^{-\gamma}\varphi_{\kappa } dx.
			\end{align*}
			Next we will give estimates of $\mc G^{\kappa }$ and $\mc G_{\kappa }$. First we give:\\
			\textbf{Estimate of $\mc G^{\kappa }$}: Setting $\Om^{\kappa } =\{ \varphi^{\kappa }>0\}$, we have
			\begin{align}\nonumber
				\frac{1}{\kappa} \I\Om \na u\na \varphi^{\kappa }=&\frac{1}{\kappa}\I\Om \na (u-\overline{u})\na \varphi^{\kappa} dx +\frac{1}{\kappa } \I\Om \na \overline{u}\na \varphi^{\kappa} dx\\ \nonumber
				=& \frac{1}{\kappa }\I{\Om^\kappa} \na (u-\overline{u})\na(u+\kappa\varphi -\overline{u}) dx +\frac{1}{\kappa} \I\Om \na \overline{u}\na \varphi^{\kappa} dx\\ \nonumber
			\end{align}
		\begin{align}\label{eq3.6}
				\geq & \I{\Om^{\kappa }} \na (u-\overline{u})\na \varphi dx+\frac{1}{\kappa} \I\Om \na \overline{u}\na \varphi^{\kappa}  dx =o(1) +\frac{1}{\kappa} \I\Om \na \overline{u}\na \varphi^{\kappa} dx.
			\end{align} 
			Using \eqref{eq3.6} and employing  the facts that $\overline{u}$ is a supersolution of $(\mc S_{\e})$, $\chi_\e(t)t^{\gamma}$ is nonincreasing and  $(\chi_\e(\overline{u}-a) \overline{u}^{-\gamma}-\chi_\e(u-a) u^{-\gamma})\varphi \in L^1(\Om)$, together with dominated convergence theorem we obtain  the following
			\begin{align} \nonumber\label{eq3.8}
				\frac{1}{\kappa} \mc G^{\kappa}\geq& o(1) +\frac{1}{\kappa}\left( \I\Om \na \overline{u}\na \varphi^{\kappa } dx -  \la\I{\Om}\chi_\e(u-a)u^{-\gamma}\varphi^{\kappa } dx\right)\\ \nonumber
				=& o(1) +\frac{1}{\kappa}\left( \I\Om \na \overline{u}\na \varphi^{\kappa} dx-  \la\I{\Om}\chi_\e(\overline{u}-a)\overline{u}^{-\gamma}\varphi^{\kappa} dx\right)\\ \nonumber
				&+ \frac{\la}{\kappa } \left(\I{\Om}\chi_\e(\overline{u}-a)\overline{u}^{-\gamma}\varphi^{\kappa} dx-\I{\Om}\chi_\e(u-a){u}^{-\gamma}\varphi^{\kappa} dx \right)\\ \nonumber
				\geq & o(1) +\frac{\la}{\kappa} \left(\I{\Om^{\kappa }}(\chi_\e(\overline{u}-a)\overline{u}^{-\gamma }-\chi_\e(u-a){u}^{-\gamma })(u-\overline{u}) dx\right)\\
				&+ \la\I{\Om^{\kappa }}\left(\chi_\e(\overline{u}-a){\overline{u}}^{-\gamma}- \chi_\e(u-a)u^{-\gamma}\right)\varphi dx \geq o(1).
			\end{align}
			In the similar fashion, we see that
			\begin{align}\label{eq3.9}
				\frac{1}{\kappa} \mc G_{\kappa}\leq o(1).
			\end{align}
			Using \eqref{eq3.8} and \eqref{eq3.9}, we conclude from \eqref{e3.5}
			\begin{align*}
				0 \leq \I\Om \na u \na \varphi dx - \la \I\Om \chi_\e(u-a) u^{-\gamma} \varphi dx,
			\end{align*}
			and the claim follows using the arbitrariness of $\varphi$. \QED
		\end{proof}
		\textbf{Proof of Proposition \ref{p3.2} $(1)$:} In view of Lemma \ref{l3.3}, we construct an ordered pair of sub- and supersolution $\underline{u}$ and $\overline{u}$, respectively, of $(\mc S_{\la,\e})$ for $\e$ small enough. Choose $0 <\e_0 <a$. Then for $0 <\e <\e_0$, we see that $ \chi_\e(t-a)t^{-\gamma} \ra \infty$ uniformly as $ t \ra 0$. Thus we can find $ \theta >0$ sufficiently small so that
		\begin{align*}
			\la_1 \theta |e_1|_\infty\leq \la\chi_\e (\theta |e_1|_\infty-a)(\theta|e_1|_\infty)^{-\gamma}.
		\end{align*}
		Now using the fact that $ \chi_\e(t-a)t^{-\gamma}$ is nonincreasing, we obtain by taking $\underline{u}=\theta e_1$ the following
		\begin{align*}
			-\Delta\underline{u} =\la_1(\theta e_1) \leq \la\chi_\e(\theta |e_1|_\infty-a)(\theta |e_1|_\infty)^{-\gamma} \leq\la \chi_\e(\underline{u} -a)\underline{u}^{-\gamma}.
		\end{align*}
		For supersolution, we take $\overline{u} =\hat{u}$, where $\hat{u}$ is the solution of the purely singular problem (replacing the discontinuity term by 1).
		Finally, we choose $\theta$ small enough so that $0 <\underline{u} \leq \overline{u}$ a.e in $\Om$. Applying Lemma \ref{l3.3} we obtain the existence of $u_{\la,\epsilon}$. It is easy to see that $u_{\la,\e} \sim  \phi_{\gamma}$. Uniqueness of $u_{\la,\epsilon}$, for $\gamma <3$ follows from the nonincreasing nature of $t\to \chi_\epsilon(t-a)t^{-\gamma}$ on $(0,+\infty)$. Finally, we prove that the map $\e \ra u_{\la,\e}$ is nonincreasing. For this, let $\e_1<\e_2$ and $u_{\la,\e_1},~u_{\la,\e_2}$ be the corresponding solutions. Observe that for $0<\epsilon_1<\epsilon_2$, $u_{\la,\epsilon_1}$ is a supersolution  for $(\mc S_{\la,\epsilon_2})$, from uniqueness and Lemma \ref{l3.3} we get $u_{\la,\epsilon_2}\leq u_{\la,\epsilon_1}$. \QED
		Next we consider the case $\gamma \geq 3$.  We first show the validity of  a weak comparison principle which will be used to get  the uniqueness of the solution. Here we adopt the ideas developped in \cite{CMSS}. \\
		We define the real valued function $g_k(\tau)$ by
		\begin{align*}
			g_k(\tau)= \begin{cases}
				\max\{-\la\chi_\e(\tau-a)\tau^{-\gamma},-k\}~&\text{if}~\tau>0,\\
				-k~&\text{if}~\tau \leq 0.
			\end{cases}
		\end{align*}
		Now consider the real valued function $G_k(\tau)$ defined by 
		\begin{align*}
			\begin{cases}
				G_k^\prime(\tau) =g_k(\tau),\\
				G_k(1) =0. 
			\end{cases}
		\end{align*}
		Finally, we define the functional $\Phi_k :H_0^1(\Om) \ra [-\infty,+\infty]$ by
		\begin{align*}
			\Phi_k(v)=\frac{1}{2} \|v\|^2 +\I{\Om} G_k(v)dx,~v \in H_0^1(\Om).
		\end{align*}
		Let $u$ be a fixed supersolution of $(\mc S_{\la,\e})$ and consider $w$ as the minimum of the functional $\Phi_k$ on the convex set
		\begin{align*}
			\mc M := \{v \in H_0^1(\Om): 0\leq v \leq u~\text{ a.e. in}~\Om\}.
		\end{align*}
		By \cite{KS}, it follows that
		\begin{align}\label{equa3.11}
			\I\Om \na w \na (v-w)dx \geq -\I{\Om}G_k^\prime(w)(v-w) dx~\text{for}~v \in w+(H_0^1(\Om) \cap L^\infty_c(\Om))~\text{and}~0 \leq v \leq u,
		\end{align}
		where $L^\infty_c(\Om)$ denotes the space of $L^\infty$ functions with compact support in $\Om$. 
		\begin{Lemma}\label{l3.4}
			We have that
			\begin{align}\label{equa3.12}
				\I\Om \na w\na vdx \geq -\I{\Om}G_k^\prime(w)v~\text{for}~v \in C_c^\infty(\Om)~\text{with}~v \geq 0~\text{ in}~\Om.
			\end{align}
		\end{Lemma}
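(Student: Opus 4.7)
The plan is to deduce \eqref{equa3.12} from the obstacle inequality \eqref{equa3.11} by choosing the admissible variation $v_t := (w + t\varphi) \wedge u$ for a fixed non-negative $\varphi \in C_c^\infty(\Om)$ and $t > 0$ small, then letting $t \to 0^+$. Writing $v_t = w + t\varphi - \zeta_t$ with $\zeta_t := (w + t\varphi - u)^+$, one has $0 \leq \zeta_t \leq t\varphi$, $\mathrm{supp}\,\zeta_t \subset \mathrm{supp}\,\varphi \Subset \Om$, and $v_t - w = t\varphi - \zeta_t \in H_0^1(\Om)\cap L^\infty_c(\Om)$, so $v_t$ is admissible in \eqref{equa3.11} and produces
\begin{equation*}
t\I\Om \na w \cdot \na \varphi\, dx - \I\Om \na w \cdot \na \zeta_t\, dx \geq -t\I\Om G_k'(w)\varphi\, dx + \I\Om G_k'(w)\zeta_t\, dx.
\end{equation*}

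To process the terms involving $\zeta_t$, I would exploit the fact that $u$ itself is a supersolution of $(\mc S_{\la,\e})$. Since $\zeta_t \geq 0$ lies in $H_0^1(\Om) \cap L^\infty_c(\Om)$ and $-G_k'(u) = \min\{\la \chi_\e(u-a)u^{-\gamma}, k\} \leq \la\chi_\e(u-a)u^{-\gamma}$, testing the supersolution inequality for $u$ against $\zeta_t$ yields $\I\Om \na u \cdot \na \zeta_t\, dx \geq -\I\Om G_k'(u)\zeta_t\, dx$. Combining with the trivial pointwise decomposition $\na w = \na u + \na(w + t\varphi - u) - t\na\varphi$ and the identity $\I\Om \na(w + t\varphi - u)\cdot \na\zeta_t\, dx = \|\na\zeta_t\|_2^2 \geq 0$ (which follows from $\na\zeta_t = \chi_{\{w+t\varphi > u\}}\na(w+t\varphi - u)$ a.e.), one arrives at $\I\Om \na w \cdot \na\zeta_t\, dx \geq -\I\Om G_k'(u)\zeta_t\, dx - t\I\Om \na\varphi \cdot \na\zeta_t\, dx$. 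Substituting this into the previous display and dividing by $t$ gives
\begin{equation*}
\I\Om \na w \cdot \na\varphi\, dx + \I\Om G_k'(w)\varphi\, dx \geq \frac{1}{t}\I\Om (G_k'(w) - G_k'(u))\zeta_t\, dx - \I\Om \na\varphi \cdot \na\zeta_t\, dx.
\end{equation*}

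To conclude, I would show that both terms on the right vanish as $t \to 0^+$. The first is treated by dominated convergence: pointwise $\zeta_t/t \to 0$ on $\{u > w\}$ (since $\zeta_t \equiv 0$ as soon as $t\varphi < u - w$) and $G_k'(w) = G_k'(u)$ on $\{u = w\}$, while $|G_k'(w) - G_k'(u)|\zeta_t/t \leq 2k\varphi \in L^1(\Om)$ supplies the dominating function. The second term requires $\|\na\zeta_t\|_2 \to 0$, and this is the main technical point of the argument. I would obtain it by decomposing the active set $\{w + t\varphi > u\} = F \cup E_t$, where $F := \{u = w\}\cap\{\varphi > 0\}$ is $t$-independent and $E_t := \{0 < u - w < t\varphi\}$ satisfies $|E_t| \to 0$ by monotone convergence. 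On $F$ the Stampacchia identity $\na u = \na w$ a.e.\ collapses the integrand $|\na(w + t\varphi - u)|^2$ to $t^2|\na\varphi|^2$, yielding an $O(t^2)$ contribution, whereas on $E_t$ the contribution vanishes thanks to the absolute continuity of $\int_{\mathrm{supp}\,\varphi}(|\na u|^2 + |\na w|^2)$ combined with $|E_t| \to 0$. Passing to the limit in the displayed inequality then produces \eqref{equa3.12}.
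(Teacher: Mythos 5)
Your argument is correct, and its core is the same as the paper's: you take the admissible variation $\min\{w+t\varphi,u\}$ in the obstacle inequality \eqref{equa3.11}, exploit that the supersolution $u$ of $(\mc S_{\la,\e})$ is also a supersolution of the truncated equation $-\De z=-G_k'(z)$ to control the part of the variation where the constraint $v\le u$ is active, and then let $t\to0^+$. Where you genuinely diverge is in how the correction term $\zeta_t=(w+t\varphi-u)^+$ is disposed of. The paper inserts the extra cutoff $\varphi_k=\psi(w/k)\varphi$ and organizes the computation around the nonnegative quantity $\|\varphi_{k,t}-w\|^2+\int_\Om\bigl(G_k'(\varphi_{k,t})-G_k'(w)\bigr)(\varphi_{k,t}-w)\,dx$ (nonnegative because $g_k$ is nondecreasing), which lets it absorb everything without ever estimating $\|\na\zeta_t\|_2$ directly, at the price of a double limit ($t\to0$, then the cutoff index $\to\infty$). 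You instead prove the strong convergence $\|\na\zeta_t\|_{L^2}\to0$ head-on, splitting the active set into the $t$-independent coincidence piece $F=\{u=w\}\cap\{\varphi>0\}$ — where Stampacchia's identity $\na u=\na w$ a.e.\ reduces the integrand to $t^2|\na\varphi|^2$ — and the shrinking piece $E_t=\{0<u-w<t\varphi\}$, handled by absolute continuity of the integral; the remaining singular term is killed by dominated convergence using $|G_k'|\le k$, $\zeta_t\le t\varphi$, and the vanishing of $G_k'(w)-G_k'(u)$ on $\{u=w\}$. Your version buys a single limit and no cutoff in $w$ (the cutoff is indeed not needed here for admissibility, since $|v_t-w|\le t\varphi$ is automatically bounded with compact support), at the cost of needing $u\in H^1_{\mathrm{loc}}$ on $\mathrm{supp}\,\varphi$ and the explicit measure-theoretic argument; the paper's version is more robust when $w$ is not known to be bounded and avoids any quantitative estimate on $\na\zeta_t$. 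Both are complete; just make sure to record, as the paper does implicitly, that the supersolution inequality for $u$ extends by density from nonnegative $C_c^\infty$ test functions to nonnegative elements of $H_0^1(\Om)\cap L^\infty_c(\Om)$ such as $\zeta_t$.
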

		\begin{proof}
			To prove this let us consider $\psi \in C_c^\infty(\mb R )$ with $0 \leq \psi \leq 1$ for $t \in \mb R$, $\psi(t )=1$ for $t \in [-1,1]$ and $\psi(t) =0$ for $t \in (-\infty,2] \cup [2,\infty)$. Then for any $\varphi \in C_c^\infty(\Om)$ with $\varphi \geq 0$ in $\Om$, we set
			\begin{align*}
				\varphi_k := \psi \left(\frac{w}{k}\right)\varphi, ~\varphi_{k,t} =\min\{w+t\varphi_k,u\},
			\end{align*} 
			with $ k\geq 1$ and $t >0$. We have that $\varphi_{k,t} \in w+(H_0^1(\Om) \cap L_c^\infty(\Om))$ and $w \leq \varphi_{k,t} \leq u$, so that by \eqref{equa3.11} we have
			\begin{align}\label{equa3.13}
				\I\Om \na w \na (\varphi_{k,t}-w)dx \geq \I{\Om} G_k^\prime(w)(\varphi_{k,t}-w)dx. 
			\end{align}                                     
			Now using \eqref{equa3.13}, we have
			\begin{align} \nonumber
				\|\varphi_{k,t} -w \|^2 +& \I\Om \left(G_k^\prime (\varphi_{k,t})-G_k(w)\right)(\varphi_{k,t}-w)dx\\ \nonumber
				\end{align}
					\begin{align}\label{equa3.14} \nonumber
				=&\I\Om \na\varphi_{k,t} \na(\varphi_{k,t}-w)dx-\I\Om \na w \na(\varphi_{k,t}-w)dx
				+ \I\Om G_k^\prime(\varphi_{k,t})(\varphi_{k,t}-w)dx\\ \nonumber
				&-\I\Om G_k^\prime(w)(\varphi_{k,t}-w)dx\\ \nonumber
				\leq& \I\Om \na\varphi_{k,t}\na(\varphi_{k,t}-w)dx  + \I\Om G_k^\prime(\varphi_{k,t})(\varphi_{k,t}-w)dx\\\nonumber
				=&  \I\Om \na\varphi_{k,t}\na(\varphi_{k,t}-w-t\varphi_k)dx + \I\Om G_k^\prime(\varphi_{k,t})(\varphi_{k,t}-w-t\varphi_k)dx\\
				&+ t \I\Om \na\varphi_{k,t}\na\varphi_kdx +t\I\Om G_k^\prime (\varphi_{k,t})\varphi_kdx.
			\end{align}
			Now using the definition of $\varphi_{k,t}$, we easily see that
			\begin{align}\label{equa3.15}
				\I\Om \na \varphi_{k,t} \na (\varphi_{k,t}-w-\varphi_k)dx =\I\Om \na u \na (\varphi_{k,t}-w-\varphi_k)dx.
			\end{align}
			Using \eqref{equa3.15} in \eqref{equa3.14}, we get
			\begin{align}\nonumber \label{equa3.17}
					\|\varphi_{k,t} -w \|^2 +& \I\Om \left(G_k^\prime (\varphi_{k,t})-G_k(w)\right)(\varphi_{k,t}-w)dx\\ \nonumber
					\leq&  \I\Om \na u \na(\varphi_{k,t}-w-t\varphi_k)dx + \I\Om G_k^\prime(u)(\varphi_{k,t}-w-t\varphi_k)dx\\ 
				+& t\left( \I\Om \na\varphi_{k,t}\na\varphi_kdx +\I\Om G_k^\prime (\varphi_{k,t})\varphi_kdx\right).
			\end{align}
			Note now that, by the definition of $G_k$, it follows that $u$ is also a supersolution to the equation $ -\De z =-G_k^\prime (z)$, so that by observing  $\varphi_{k,t} -w-t \varphi_k \leq 0$, we deduce from \eqref{equa3.17} 
			\begin{align*}
				\|\varphi_{k,t} -w \|^2 + \I\Om \left(G_k^\prime (\varphi_{k,t})-G_k(w)\right)(\varphi_{k,t}-w)dx	
				\leq  t \I \Om \na \varphi_{k,t}\na\varphi_kdx +t\I\Om G_k^\prime (\varphi_{k,t})\varphi_kdx.
			\end{align*}
			Exploiting again the fact that $\varphi_{k,t}-w \leq t \varphi_k$, by simple computations we deduce that
			\begin{align*}
				\I\Om \na \varphi_{k,t}\na\varphi_kdx +\I\Om G_k^\prime (\varphi_{k,t})\varphi_kdx \geq - \I\Om |G_k^\prime (\varphi_{k,t})-G_k(w)||\varphi_k|dx.
			\end{align*}
			Now passing to the limit $t \ra 0$ and employing Lebesgue Dominated convergence theorem, we see that
			\begin{align*}
				\I\Om \na w\na\varphi_k dx +\I\Om G_k^\prime (w)\varphi_kdx \geq 0.
			\end{align*}
			Finally the proof holds by tending $k$ to infinity. \QED
		\end{proof}
		Next we prove a weak comparison principle from which the uniqueness of the solution follows. Precisely, we have:
		\begin{Theorem}\label{wcp}
			Let $\gamma >0$, $v$ be a subsolution to $(\mc S_{\la,\e})$ such that $(v-\nu)^+ \in H_0^1(\Om)$ for every $\nu >0$  and let $u$ be a supersolution to $(\mc S_{\la,\e})$. Then $v \leq u$ a.e. in $\Om$.
		\end{Theorem}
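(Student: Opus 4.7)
The plan is to use the function $w$ from Lemma \ref{l3.4} as an intermediate: since $w \in \mc M$, we have $w \leq u$ a.e.\ in $\Om$ by construction, so it suffices to prove $v \leq w$ a.e.\ in $\Om$. First, because $g_k(\tau) \in [-k,0]$ for every $\tau$, Lemma \ref{l3.4} gives that $w$ is nonnegative and superharmonic in the distributional sense; comparing $\Phi_k(w)\leq\Phi_k(\theta e_1)<0$ for $\theta$ small and $k$ large shows $w\not\equiv 0$, and the strong maximum principle then yields $w>0$ a.e.\ in $\Om$. In particular $-g_k(w)=\min\{\la\chi_\e(w-a)w^{-\ga},k\}$ a.e. Since $|g_k|\leq k$ and $\Om$ is bounded, a standard density argument extends \eqref{equa3.12} to all nonnegative $\psi \in H^1_0(\Om)$.

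For the comparison, fix $\nu>0$ and set $\phi_\nu := (v-w-\nu)^+$. Writing $\phi_\nu = ((v-\nu)^+-w)^+$ shows $\phi_\nu \in H^1_0(\Om)$, since both $(v-\nu)^+$ and $w$ are. Moreover $\operatorname{supp}\phi_\nu\subset\{v>\nu\}$, on which $\chi_\e(v-a)v^{-\ga}\leq\nu^{-\ga}$ is bounded, so $\phi_\nu$ is admissible in the weak subsolution inequality for $v$. Testing that inequality and the extended version of Lemma \ref{l3.4} for $w$ with $\phi_\nu$ and subtracting yields
\begin{equation*}
\int_\Om |\nabla\phi_\nu|^2\,dx \;\leq\; \la\int_\Om \bigl[\chi_\e(v-a)v^{-\ga}-\min\{\chi_\e(w-a)w^{-\ga},\,k/\la\}\bigr]\phi_\nu\,dx.
\end{equation*}
On $\operatorname{supp}\phi_\nu$ we have $v>w$, so by the monotonicity of $t\mapsto \chi_\e(t-a)t^{-\ga}$ on $(0,\infty)$ already used in the proof of Lemma \ref{l3.3}, $\chi_\e(v-a)v^{-\ga}\leq\chi_\e(w-a)w^{-\ga}$ there, and therefore
\begin{equation*}
\int_\Om |\nabla\phi_\nu|^2\,dx \;\leq\; \la\int_\Om \bigl(\chi_\e(w-a)w^{-\ga}-k/\la\bigr)^+\phi_\nu\,dx.
\end{equation*}

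To pass to the limit $k\to\infty$, I would first replace $\phi_\nu$ by its truncation $\phi_{\nu,M}:=\min(\phi_\nu,M)\in H^1_0(\Om)\cap L^\infty(\Om)$ and repeat the computation. The integrand on the right hand side is then bounded by $M\bigl(\la\chi_\e(w-a)w^{-\ga}-k\bigr)^+$, which is supported on the set $\{w<(\la/k)^{1/\ga}\}$. Since $w>0$ a.e., the measure of this set shrinks to $0$ as $k\to\infty$, and the integrand converges to $0$ a.e.; a dominated/monotone convergence argument therefore forces the right hand side to $0$. Hence $\nabla\phi_{\nu,M}\equiv 0$ for every $M>0$, so $\phi_\nu\equiv 0$, i.e.\ $v\leq w+\nu$; letting $\nu\downarrow 0$ yields $v\leq w\leq u$ a.e.\ in $\Om$.

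The main technical difficulty I expect is the passage $k\to\infty$ in the final estimate: the natural dominating function $\chi_\e(w-a)w^{-\ga}$ is not a priori in $L^1$ over $\operatorname{supp}\phi_\nu$, because $w$ can vanish near $\partial\Om$ (and we are in the regime $\ga\geq 3$ where standard Hardy estimates fail). This is precisely why the double truncation with parameter $M$ is needed: fixing $M$ turns the right hand side into an integral of $\bigl(\la\chi_\e(w-a)w^{-\ga}-k\bigr)^+$, which is both bounded in $k$ on $\{w\geq\delta\}$ and supported on the shrinking set $\{w<(\la/k)^{1/\ga}\}$, so the limit can be taken in $k$ first and in $M$ afterwards without requiring any a priori integrability of $w^{-\ga}$.
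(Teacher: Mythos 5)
Your overall strategy is the paper's: compare $v$ with the obstacle--problem minimizer $w$ of $\Phi_k$ over $\mc M$, which sits below $u$, by testing with $(v-w-\nu)^+$ and exploiting the monotonicity of $t\mapsto\chi_\e(t-a)t^{-\ga}$. But there is a genuine gap in your last step. The minimizer $w$ minimizes $\Phi_k$, whose integrand $G_k$ is built from the truncation $g_k$ at level $k$; hence $w=w_k$ \emph{depends on} $k$. Your final inequality
\begin{equation*}
\int_\Om |\nabla\phi_{\nu}|^2\,dx \;\leq\; \int_\Om \bigl(\la\chi_\e(w_k-a)w_k^{-\ga}-k\bigr)^+\phi_{\nu}\,dx
\end{equation*}
therefore involves a moving target, and the claim that the right-hand side vanishes as $k\to\infty$ because $\{w_k<(\la/k)^{1/\ga}\}$ shrinks requires a lower bound on $w_k$ that is \emph{uniform in} $k$; pointwise positivity of each $w_k$ is not enough. (Your positivity argument is itself incomplete: it needs $\theta e_1\in\mc M$, i.e.\ $\theta e_1\leq u$ for the given supersolution $u$, which you do not establish, and the correct comparison is $\Phi_k(\theta e_1)<\Phi_k(0)$ rather than $\Phi_k(\theta e_1)<0$, since $\Phi_k(0)=\int_\Om G_k(0)\,dx>0$.)

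The repair --- and this is what the paper does --- is to never send $k\to\infty$. On $\mathrm{supp}\,\phi_\nu\subset\{v>\nu\}$ you have $\la\chi_\e(v-a)v^{-\ga}\leq\la\nu^{-\ga}$, so once $k>\la\nu^{-\ga}$ the truncation is inactive on $v$: $\la\chi_\e(v-a)v^{-\ga}=-g_k(v)$ there. Since $g_k$ itself is nondecreasing (it is the maximum of the nondecreasing function $-\la\chi_\e(\cdot-a)(\cdot)^{-\ga}$ with the constant $-k$) and $v\geq w$ on $\mathrm{supp}\,\phi_\nu$, the bracket in your display is $-g_k(v)+g_k(w)\leq 0$ pointwise, with no error term at all. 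This gives $\|\phi_\nu\|^2\leq 0$ for a single fixed $k=k(\nu)$, whence $v\leq w+\nu\leq u+\nu$, and $\nu\downarrow 0$ finishes. In short: use the monotonicity of the \emph{truncated} nonlinearity $g_k$, not of the untruncated one --- that is precisely what the truncation is for. (Your truncation of the test function at level $M$ plays the same role as the paper's $K_\tau$ and is fine; the admissibility of $\phi_{\nu,M}$ in the subsolution inequality for $v$ still deserves the density argument with compactly supported approximations, as in the paper, but that is routine.)
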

		\begin{proof}
			Let $w$ be  as in Lemma \ref{l3.4}. Since $w \in H_0^1(\Om)$ and nonnegative, for any $\nu >0$, supp$(v-w-\nu)^+$ is contained in supp$(v-\nu)^+$. From this, we conclude that
			\begin{equation*}
				(v-w-\nu)^+ \in H_0^1(\Om)~\text{for any}~\nu >0.  
			\end{equation*}
			Now using standard density arguments, we obtain from \eqref{equa3.12} that
			\begin{equation}\label{equa3.18}
				\I \Om  \na w \na K_\tau((v-w-\nu)^+)dx \geq \I\Om G_k^\prime(w) K_\tau((v-w-\nu)^+)dx
			\end{equation}
			where $K_\tau(t) :=\min\{t,\tau\}$ for $\tau \geq 0$ and $K_\tau(-t):=- K_\tau(t)$ for $t <0$. Let now $\psi_k \in C_c^\infty(\Om)$ such that $\psi_k \ra (v-w-\nu)^+ \in H_0^1(\Om)$ and set
			\begin{align*}
				\widetilde{\psi}_{\tau,k} := K_\tau (\min\{(v-w-\nu)^+,\psi_k^+\}).
			\end{align*}
			It follows that $\widetilde{\psi}_{\tau,k} \in H_0^1(\Om) \cap L^\infty_c(\Om)$ and by a density argument
			\begin{align*}
				\I\Om \na v \na K_\tau((v-w-\nu)^+)dx \leq \I\Om \la\frac{\chi_\e(v-a)}{v^{\gamma}} \widetilde{\psi}_{\tau,k}dx.
			\end{align*}
			Passing to the limit as $k \ra \infty$, we obtain
			\begin{equation}\label{equa3.19}
				\I\Om \na v \na K_\tau((v-w-\nu)^+)dx \leq \I\Om \la\frac{\chi_\e(v-a)}{v^{\gamma}} K_\tau((v-w-\nu)^+)dx.
			\end{equation}
		 Choosing $\nu >0$ such that $\nu^{-\gamma} <k$, from \eqref{equa3.18} and \eqref{equa3.19} we deduce that
			\begin{align*}
				\|K_\tau((v-w-\nu)^+)\|^2 \leq& \I\Om \left(\la\frac{\chi_\e(v-a)}{v^{\gamma }}+G_k^\prime(w)\right)K_\tau((v-w-\nu)^+)dx\\
				\leq & \I\Om(-G_k^\prime(v)+G_k^\prime(w)) K_\tau((v-w-\nu)^+)dx\leq 0.
			\end{align*}
			By the arbitrariness of $\tau$ we deduce that
			\begin{align*}
				v \leq w+\nu \leq u+\nu~\text{a.e. in}~\Om
			\end{align*}
			and the conclusion follows letting $\nu \ra 0$. \QED
		\end{proof} 
		Regarding the existence of solutions, we use the classical approach of regularizing the singular nonlinearities $u^{-\gamma}$ by $\left(u+\frac{1}{k}\right)^{-\gamma}$ and derive uniform a priori estimates for the weak solution of the regularized problem. More precisely, we study the following approximated problem 
		\begin{align*}
			(\mc S_{\la,\e,k})	\begin{cases} -\De u =\ds\la\frac{\chi_\e(u-a)}{(u+\frac{1}{k})^{\gamma}} , \quad u>0~&\text{in}~\Om,\\
				u=0~&\text{on}~\partial\Om. \end{cases}
		\end{align*}
		\begin{Lemma}\label{le3.4}
			For any $k \in \mb N\backslash\{0\}$ and $ \gamma>0$, there exists a unique nonnegative weak solution $u_{\la,k,\e} \in H_0^1(\Om)$ of the problem $(\mc S_{\la,\e,k})$ in the sense that
			\begin{align}\label{equ3.9}
				\I\Om \na u_{\la,k,\e} \na vdx =\la\I\Om \frac{\chi_\e(u_{\la,k,\e}-a)}{(u_{\la,k,\e}+\frac{1}{k})^{\gamma}}vdx~\text{for all}~v\in H_0^1(\Om).
			\end{align} 
			Moreover,
			\begin{enumerate}[label=\roman*)]
				\item The solution $u_{\la,k,\e} \in C^{1,\al}(\overline{\Om})$ for every $\al \in (0,1)$ and $u_{\la,k,\e} >0$ in $\Om$.
				\item The sequence $\{u_{\la,k,\e}\}_{n\in \mb N}$ is monotonically increasing in the sense that $ u_{\la,k+1,\e} \geq u_{\la,k,\e}$ for all $k \in \mb N$.
				\item For every compact set $ K \Subset \Om$ and $k \in \mb N$, there exists a constant $M(K)>0$ independent of $k$ such that $u_{\la,k,\e} \geq M(K) >0$.
				\item $u_{\la,k,\e}$ is uniformly bounded in $L^\infty(\Om)$ both in $k$ and $\e$.
				\item 
				$u_{\la,k,\e}^\omega$ is bounded in $H_0^1(\Om)$ with $\omega>\frac{\gamma+1}{4}$, independently of both $k$ and $\e$.   
			\end{enumerate}
		\end{Lemma}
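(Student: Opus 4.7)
For fixed $k$ the problem $(\mc S_{\la,\e,k})$ is regular in the sense that the nonlinearity $\phi_{k,\e}(t):=\chi_\e(t-a)(t+1/k)^{-\gamma}$ is continuous, bounded on $[0,\infty)$ (by $k^\gamma$) and nonincreasing on $(0,\infty)$. The plan is to obtain existence and uniqueness for fixed $k,\e$ by direct minimization plus a monotonicity comparison, and then extract the $k,\e$-uniform estimates (iii)--(v) by testing the equation against carefully chosen powers of the solution.

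\emph{Existence, uniqueness and (i).} With
\[
J_{k,\e}(u)=\frac12\|u\|^2-\la\int_\Om\Phi_{k,\e}(u^+)dx,\qquad \Phi_{k,\e}(t)=\int_0^t\phi_{k,\e}(s)ds,
\]
the energy is $C^1$ on $H_0^1(\Om)$ (since $\Phi_{k,\e}$ is globally Lipschitz with slope $k^\gamma$), coercive and weakly lower semicontinuous, so a nonnegative minimizer $u_{\la,k,\e}$ exists. Bootstrapping the bounded right-hand side via Calder\'on--Zygmund estimates gives $u_{\la,k,\e}\in W^{2,p}(\Om)$ for every $p<\infty$, hence $u_{\la,k,\e}\in C^{1,\al}(\overline\Om)$, and the strong maximum principle ensures $u_{\la,k,\e}>0$ in $\Om$, proving (i). Uniqueness follows from the monotonicity of $\phi_{k,\e}$: for two solutions $u_1,u_2$, taking $(u_1-u_2)^+$ as test function yields
\[
\|(u_1-u_2)^+\|^2=\la\int_\Om\bigl(\phi_{k,\e}(u_1)-\phi_{k,\e}(u_2)\bigr)(u_1-u_2)^+dx\leq 0,
\]
so $u_1\leq u_2$, and symmetry closes the argument.

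\emph{Items (ii)--(iv).} For (ii), $\phi_{k_1,\e}\leq\phi_{k_2,\e}$ pointwise when $k_1<k_2$ makes $u_{\la,k_1,\e}$ a subsolution of the $k_2$-problem, and the same monotonicity comparison gives $u_{\la,k_1,\e}\leq u_{\la,k_2,\e}$. For (iii), I construct a $k,\e$-independent subsolution $\underline u=\theta e_1$ with $\theta>0$ small enough so that both $\theta|e_1|_\infty<a-\e_0$ (whence $\chi_\e(\underline u-a)\equiv 1$) and $\theta\la_1|e_1|_\infty\leq\la(\theta|e_1|_\infty+1)^{-\gamma}$ hold; these combine into
\[
-\De\underline u=\theta\la_1 e_1\leq\la\chi_\e(\underline u-a)(\underline u+1/k)^{-\gamma},
\]
and weak comparison then yields $u_{\la,k,\e}\geq\theta e_1\geq M(K)>0$ on every $K\Subset\Om$. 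For (iv), I test with $(u_{\la,k,\e}-a)^+\in H_0^1(\Om)$: since $\chi_\e(t-a)\equiv 0$ for $t\geq a$, the right-hand side vanishes on $\{u_{\la,k,\e}>a\}$, giving $\|\na(u_{\la,k,\e}-a)^+\|_2^2=0$ and therefore $u_{\la,k,\e}\leq a$ a.e., uniformly in $k$ and $\e$.

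\emph{Item (v), the main obstacle.} Testing the equation with (an approximation of) $u_{\la,k,\e}^{2\omega-1}$ and applying the chain rule yields
\[
\frac{2\omega-1}{\omega^2}\int_\Om|\na u_{\la,k,\e}^\omega|^2dx=\la\int_\Om\frac{\chi_\e(u_{\la,k,\e}-a)u_{\la,k,\e}^{2\omega-1}}{(u_{\la,k,\e}+1/k)^\gamma}dx\leq\la\int_\Om u_{\la,k,\e}^{2\omega-1-\gamma}dx,
\]
where I used $(u_{\la,k,\e}+1/k)^{-\gamma}\leq u_{\la,k,\e}^{-\gamma}$ and $\chi_\e\leq 1$. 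When $2\omega-1-\gamma\geq 0$, the uniform bound (iv) immediately gives $\int_\Om u_{\la,k,\e}^{2\omega-1-\gamma}dx\leq a^{2\omega-1-\gamma}|\Om|$. When $2\omega-1-\gamma<0$, the crux is to sharpen the subsolution of (iii) to a uniform boundary lower bound $u_{\la,k,\e}\succ\phi_\gamma$, by replacing $\theta e_1$ with a multiple $c\phi_\gamma$ of the barrier of Definition \ref{d2.3}, using that $-\De\phi_\gamma$ exhibits the singular profile $\sim\phi_\gamma^{-\gamma}$ near $\pa\Om$ matching the right-hand side uniformly in $k,\e$. Combined with the elementary computation (treated separately in the three cases $\gamma<1,\,\gamma=1,\,\gamma>1$ using the boundary asymptotics of $\phi_\gamma$) showing $\int_\Om\phi_\gamma^{2\omega-1-\gamma}dx<\infty$ exactly when $\omega>(\gamma+1)/4$, this yields the uniform estimate. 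The construction of the $\phi_\gamma$-subsolution uniform in $(k,\e)$ is the main technical obstacle, since for $\gamma>1$ we have $\phi_\gamma\gg e_1$ near $\pa\Om$, so the $\theta e_1$ barrier is too small to control the singular right-hand side up to the boundary.
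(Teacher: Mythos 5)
Your proposal follows essentially the same route as the paper. The paper dismisses items (i)--(iv) as standard, and proves (v) by testing with $(u_{\la,k,\e}+\nu)^{\omega'}-\nu^{\omega'}$ and letting $\nu\to0$; after the substitution $\omega=(\omega'+1)/2$ this is exactly your computation with the test function $u_{\la,k,\e}^{2\omega-1}$, and both arguments then reduce matters to the finiteness of $\int_\Om u_{\la,k,\e}^{2\omega-1-\gamma}\,dx$ via a boundary lower bound of the type $u_{\la,k,\e}\succ\phi_\gamma$, which is what produces the threshold $\omega>(\gamma+1)/4$. Your treatment of (iv) is in fact cleaner and sharper than the paper's: testing with $(u_{\la,k,\e}-a)^+$ gives the exact bound $u_{\la,k,\e}\le a$ in one line, whereas the paper runs a Stampacchia level-set iteration yielding only an unquantified $L^\infty$ bound.

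The step you single out as the main obstacle is, however, a genuine gap, and the fix you propose cannot work as stated. For fixed $k$ the right-hand side of $(\mc S_{\la,\e,k})$ is bounded by $\la k^\gamma$, so by (i) the solution $u_{\la,k,\e}$ lies in $C^{1,\al}(\overline\Om)$ and vanishes \emph{linearly} at $\partial\Om$, i.e. $u_{\la,k,\e}\prec\delta$. When $\gamma>1$ one has $\phi_\gamma\sim\delta^{2/(\gamma+1)}$, which decays strictly more slowly than $\delta$, so no inequality $u_{\la,k,\e}\ge c\phi_\gamma$ can hold near $\partial\Om$ for any $c>0$. Correspondingly, $c\phi_\gamma$ is \emph{not} a subsolution of the $k$-regularized problem: near the boundary $-\De(c\phi_\gamma)\sim\delta^{2/(\gamma+1)-2}\to\infty$, while $\la(c\phi_\gamma+1/k)^{-\gamma}\le\la k^\gamma$ stays bounded, so the required differential inequality fails exactly where it is needed. (The paper's own invocation of ``the boundary behaviour of $u_{k,\e}$'' in estimate \eqref{3.14} suffers from the same defect, so you have reproduced this gap rather than created it.) What does hold uniformly in $k$ is the linear lower bound $u_{\la,k,\e}\ge u_{\la,1,\e}\succ\delta$ coming from the monotonicity in (ii) and Hopf's lemma, and this makes $\int_\Om u_{\la,k,\e}^{2\omega-1-\gamma}dx$ uniformly finite only for $\omega>\gamma/2$; the full range $\omega>(\gamma+1)/4$ is recovered only for the limit $u_{\la,\e}$, which genuinely satisfies $u_{\la,\e}\succ\phi_\gamma$. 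Since the lemma is applied with $\gamma\ge3$ and only some $\omega>1$ is needed there (any $\omega>\gamma/2\ge 3/2$ works), the weaker uniform range suffices downstream, but in the window $(\gamma+1)/4<\omega\le\gamma/2$ item (v) is established neither by your argument nor by the paper's.
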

		\begin{proof} Proof of parts $i)$, $ii)$, $iii)$ and $iv)$ are standard and hence omitted. We next give the proof of part $v)$. Since $u_{\la,k,\e} \in L^\infty(\Om) \cap H_0^1(\Om)$ and positive, for any $\nu >0$ and $\omega>0$, $(u_{\la,k,\e}+\nu)^\omega -\nu^\omega$ belongs to $H_0^1(\Om)$ and so by taking it as a test function in \eqref{equ3.9} with $\nu \in (0,\frac{1}{k})$ and $\omega \in [\gamma,\infty)$, we obtain
			\begin{align}\label{3.11}
				\I\Om \na u_{\la,k,\e} \na (u_{\la,k,\e}+\nu)^\omega dx 
				\leq \la\I\Om \frac{\chi_\e(u_{\la,k,\e}(x)-a)}{(u_{\la,k,\e}+\frac{1}{k})^{\gamma }} (u_{\la,k,\e}+\nu)^\omega dx \leq \la \I\Om (u_{\la,k,\e}+\nu)^{\omega-\gamma } dx.
			\end{align}
			Passing $\nu \ra 0$ in \eqref{3.11} via Fatou's Lemma, we obtain
			\begin{align}\label{3.12}
				\frac{4 \omega}{(\omega+1)^2} \I\Om |\na u_{\la,k,\e}^\frac{\omega+1}{2}|^2 dx  \leq  \la\I\Om (u_{\la,k,\e})^{\omega-\gamma} dx.
			\end{align} 
			In order to estimate the R.H.S term of \eqref{3.12}, we choose $\omega $ that satisfies (according to boundary behaviour of $u_{k,\epsilon} $):
			\begin{align*}
				\omega >\frac{\gamma-1}{2}.
			\end{align*}
		  Now using the boundary behaviour of $u_{k,\epsilon}$ 
			\begin{align}\label{3.14}
				\I\Om u_{\la,k,\e}^{\omega-\gamma} dx \leq \la\I\Om e_1^\frac{2(\omega -\gamma)}{\gamma+1}<\infty, 
			\end{align}
		since $\frac{2(\omega -\gamma)}{\gamma+1} >-1$. Combining \eqref{3.12} and \eqref{3.14}, we obtain the required conclusion.\\
			We now turn our attention to the $L^\infty$ estimates (assertion iv)). For this fix $p > \frac{n}{2}$. Now take $\phi_m(u_{\la,k,\e}):=(u_{\la,k,\e}-m)^+$ with $m \geq 1$ as a test function in \eqref{equ3.9} and using Sobolev embeddings and H\"{o}lder inequality, we get
			\begin{align} \nonumber\label{3.17}
				\left(\I{T_m} |\phi_m(u_{\la,k,\e})|^\frac{2n}{n-2}\right)^\frac{n-2}{2} \leq& M\I{T_m}|\na \phi_m(u_{\la,k,\e})|^2dx =M\I{\Om} \na u_{\la,k,\e}\na \phi_m(u_{\la,k,\e})dx\\ \nonumber
				\leq& M\I\Om \frac{\chi_\e(u_{\la,k,\e}-a)}{(u_{\la,k,\e}+\frac{1}{k})^{\gamma}}\phi_m(u_{\la,k,\e})dx \leq M \I{T_m} \chi_\e(u_{\la,k,\e}-a) \phi_m(u_{\la,k,\e})dx \\ \nonumber
				\leq& M |\chi_\e(u_{\la,k,\e}-a)|_p	\left(\I{T_m} |\phi_m(u_{\la,k,\e})|^\frac{2n}{n-2}\right)^\frac{n-2}{2n}|T_m|^{1-\frac{n-2}{2n}-\frac{1}{p}}\\
				\leq& M|\Om|^\frac{1}{p} \left(\I{T_m} |\phi_m(u_{\la,k,\e})|^\frac{2n}{n-2}\right)^\frac{n-2}{2n}|T_m|^{1-\frac{n-2}{2n}-\frac{1}{p}}
			\end{align}
			where $T_m:= \{x \in \Om: u_{\la,k,\e} \geq m\}$. Let $j >m \geq 1$, then $T_j \subset T_m$ and $\phi_m(u_{\la,k,\e}) \geq j-m$ for $x \in T_j$. Using above facts, from \eqref{3.17}, we obtain
			\begin{align*}
				|j-m||T_j|^\frac{n-2}{2n} \leq \left(\I{T_j} |\phi_m(u_{\la,k,\e})|^\frac{2n}{n-2}\right)^\frac{n-2}{2n} \leq \left(\I{T_m} |\phi_m(u_{\la,k,\e})|^\frac{2n}{n-2}\right)^\frac{n-2}{2n}\leq M||\Om|^\frac{1}{p}|T_m|^{1-\frac{n-2}{2n}-\frac{1}{p}}
			\end{align*}
			which further implies
			\begin{align*}
				|T_j| \leq M \frac{|\Om|^\frac{2n}{(n-2)p}|T_m|^{\frac{2n}{n-2}\left(1-\frac{n-2}{2n}-\frac{1}{p}\right)}}{|j-m|^\frac{2n}{n-2}}.
			\end{align*}
			Since $p > \frac{n}{2}$, we have that
			\begin{align*}
				\frac{2n}{n-2}\left(1-\frac{n-2}{2n}-\frac{1}{p}\right) >1.
			\end{align*}
			Thus by \cite[Lemma B.1]{KS}, there exists $m_0$ such that $|A_m| =0$ for all $m \geq m_0$.
			This completes the proof.
			\QED
		\end{proof}
		\textbf{Proof of Proposition \ref{p3.2} $(2)$:} Let $\gamma \geq 3$ and $u_{\la,k,\e}$ be the weak solution of the problem $(\mc S_{\la,\e,k})$ in the sense that it satisfies \eqref{equ3.9}. Now from Lemma \ref{le3.4}, we know that $u_{\la,k,\e}^{\frac{\omega+1}{2}}$ is uniformly bounded in $H_0^1(\Om)$ with $\omega >\frac{\gamma-1}{2}$. Since $\gamma\geq 3$, we have $\omega > 1$. This together with the fact that for every compact subset $K \Subset \Om$ there exists $M =M(K)$ independent of $k$ such that $0< M \leq u_{\la,k,\e}(x)$ for $x \in K$, we get that $u_{\la,k,\e}$ is uniformly bounded in $H^1_{\text{loc}}(\Om)$. Precisely, 
		\begin{align*}
			\I{K}|\na u_{\la,k,\e}|^2 \leq M^{-(\omega-1)}\I{K} u_{\la,k,\e}^{(\omega-1)}|\na u_{\la,k,\e}|^2 dx \leq \frac{4 M^{-(\omega-1)}}{\omega+1}\I{K} | \na u_{\la,k,\e}^\frac{\omega+1}{2}|^2 \leq M_1, 
		\end{align*}
		where $M_1$ is independent of $k$. Then there exists a $u_\e \in H^1_{\text{loc}}(\Om)$ such that
		\begin{align}\label{eq3.10} u_{\la,k,\e} \rightharpoonup u_{\la,\e}, ~u_{\la,k,\e} \ra u_{\la,\e}  ~\text{in}~ L^r_{\text{loc}}(\Om)~\text{for}~ 1 \leq r <2^*~\text{ and a.e in}~ \Omega.
		\end{align}
 Now by using the weak convergence property we are able to pass to the limit in the left hand side of \eqref{equ3.9}, $i.e.$ for any $v \in H^1_{\text{loc}}(\Om)$ with $K=$ supp$(v) \Subset \Om$,
		\begin{equation}\label{equ3.11}
			\I \Om \na u_{\la,k,\e}\na vdx  \ra \I \Om \na u_{\la,\e}\na vdx ~\text{as}~k \ra \infty.
		\end{equation}
		Finally using the facts that $0\leq\chi_\e(u_{\la,k,\e}-a)\leq1$, $M_1(K) \leq u_{\la,1,\e} \leq u_{\la,k,\e}$ a.e. in $K$ and from Lebesgue dominated convergence theorem, we get
		\begin{equation}\label{equ3.12}
			\I\Om \frac{\chi_\e(u_{\la,k,\e}-a)}{(u_{\la,k,\e}+\frac{1}{k})^{\gamma}}v \ra \I\Om \frac{\chi_\e(u_{\la,\e}-a)}{u_{\la,\e}^{\gamma}}vdx.
		\end{equation}
		Passing to the limit in \eqref{equ3.9} and using \eqref{equ3.11} and \eqref{equ3.12}, we see that $u_\e$ is a weak solution of $(\mc S_{\la,\e})$. For uniqueness of the solution, using Theorem \ref{wcp}, it is sufficient to show that $(u_{\la,\e} -\nu)^+ \in H_0^1(\Om)$ for every $\nu >0$. From Lemma \ref{le3.4}, we have $u_{\la,k,\e}^\omega \in H_0^1(\Om)$, where $\omega >\frac{\gamma+1}{4}\geq 1$. Let $\varphi_m \in C_c^1(\Om)$ such that $\varphi_m$ converges to $u_{\la,k,\e}^\omega$ in $H_0^1(\Om)$ and set 
		\begin{align*}
			\psi_m := (\varphi_m^\frac{1}{\omega}-\nu)^+.
		\end{align*}
	Clearly $\psi_m$ is uniformly bounded in $H_0^1(\Om)$ and converges a.e. to $(u_{\la,k,\e}-\nu)^+$. Therefore we obtain that $(u_{\la,k,\e}-\nu)^+ \in H_0^1(\Om)$ and hence $(u_{\la,\e}-\nu)^+ \in H_0^1(\Om)$. This proves uniqueness. Next we prove that the map $\e \ra u_{\la,\e}$ is nonincreasing. For this, let $\e_1<\e_2$ and $u_{\la,\e_1},~u_{\la,\e_2}$ be the corresponding solutions. Arguing by contradiction, suppose  that there exists $F \subset \Om$ with positive measure such that $w =u_{\la,\e_1}-u_{\la,\e_2} <0$ a.e on $F$.  
		Using the facts that $u_{\la,\e_1}$, $u_{\la,\e_2}$ are solutions of $(\mc S_{\la,\e_1})$ and $(\mc S_{\la,\e_2})$, respectively, $\la>0$ and for $t_1 <t_2$ and $\e_1 <\e_2$, $\chi_{\e_1}(t_1-a) \geq \chi_{\e_2}(t_2 -a)$, by taking $v =w^-$, we get
		\begin{align*}
			\I\Om |\na w^-|^2 dx =-\la\I\Om \left(\chi_{\e_1}(u_{\la,\e_1}-a)u_{\la,\e_1}^{-\gamma}-\chi_{\e_2}(u_{\la,\e_2}-a)u_{\la,\e_2}^{-\gamma}\right)v \leq 0,
		\end{align*}
	and hence $w \geq 0$ a.e. in $\Om$, which yields a contradiction.
			Lastly we check the validity of \eqref{3.3} and \eqref{equ3.2}. From Lemma \ref{le3.4}, we have $u_{\la,k,\e}^{\omega}$ is uniformly bounded in $H_0^1(\Om)$ for $\omega >\frac{\gamma+1}{4}$. This implies the existence of $\psi \in H_0^1(\Om)$ such that $u_{\la,k,\e}^\omega \rightharpoonup \psi$ in $H_0^1(\Om)$ and $u_{\la,k,\e}^\omega \ra \psi$ un $L^r(\Om)$ for every $1 \leq r< 2^*$ a.e. in $\Om$. This together with \eqref{eq3.10} imply that $\psi=u_{\la,\e}^\omega$. Thus  we have
		\begin{align*}
			\|u_{\la,\e}^\omega\| \leq \liminf_{k \ra \infty} \|u_{\la,k,\e}^\omega\|^2 \leq M.
		\end{align*}
		From Lemma \ref{le3.4}, we know that $u_{\la,k,\e}$ is uniformly bounded in $L^\infty(\Om)$ both in $k$ and $\e$ and hence $u_{\la,\e}$ is uniformly bounded in $L^\infty(\Om)$.
		This completes the proof.\QED
		Lastly, we give the\\
		\textbf{Proof of Proposition \ref{p3.1}:} We divide the proof into two cases:\\
		\textbf{Case A}: $0<\gamma <3$. In this case, since $u_{\la,\e}$ is a solution of $(\mc S_{\la,\e})$, using Proposition \ref{p3.2}, Remark \ref{r2.4} and Hardy's inequality we find,
		\begin{align*}
			\|u_{\la,\e}\|^2 =\la \I\Om \chi_\e(u_{\la,\e}-a)u_{\la,\e}^{1-\gamma} \leq C \I\Om u_{\la,\e} \phi_{\gamma}^{-\gamma} \leq M \left( \I\Om |\na u_{\la,\e}|^2\right)^\frac12  = M\|u_{\la,\e}\|.
		\end{align*}
		Thus, $\{u_{\la,\e}\}_{\e}$ is a bounded sequence in $H_0^1(\Om)$. Let $u_{\la,\e} \rightharpoonup u_\lambda$ in $H_0^1(\Om)$ and $a.e$ in $\Om$. From the lower bound $u_{\la,\e} \geq M \phi_{\gamma}$, we obtain that $\{\chi_\e(u_{\la,\e}-a)u_{\la,\e}^{-\gamma}\}$ is a bounded sequence in $L^\infty_{\text{loc}}(\Om)$. Then using elliptic regularity theory, $\{u_{\la,\e}\}_\e$ is a bounded sequence in $C_{\text{loc}}^{\al}(\Om)$ for some $\al >0$ and hence, $u_{\la,\e} \ra u_\lambda$ uniformly on compact subsets of $\Om$. Let $\psi \in H_0^1(\Om)$ be arbitrary. Using Remark \ref{r2.4}, the estimate $\chi_\e(u_{\la,\e}-a)u_{\la,\e}^{-\gamma}\psi \leq M \phi_{\gamma}^{-\gamma}\psi$ and the weak convergence of $u_{\la,\e} \rightharpoonup u_\lambda$, we obtain that $u_\la$ solves $(\mc S_\la)$. \\
		\textbf{Case B:} $\gamma \geq 3$. The boundedness of the sequence $\{u_{\la,\e}\}_\e$ follows using the same arguments as in the proof of Proposition \ref{p3.2} $(2)$. Let $u_{\la,\e} \rightharpoonup u_\lambda$ in $H_0^1(\Om)$ and a.e. in $\Om$. Using the fact that for any compact set $K \Subset \Om$, $0<M(K) \leq u_{\la,\e}$ for every $\e >0$ and following the similar arguments of Case A, we conclude that $u_\la$ is a weak solution of $(\mc S_\la)$. \\
		In both cases, any solution to $(\mc S_\la)$ is a supersolution to  $(\mc S_{\la,\e})$. According to Theorem \ref{wcp}, $u_\lambda$ is the minimal solution to  $(\mc S_\la)$. Finally, the H\"{o}lder regularity results follow using the boundary behaviour and \cite[Theorem 1.2]{GL}}.\QED 	
		\section{Existence of a first solution for $(\mc P_\la)$}\label{s4}
		In this section, we establish the existence of first solution of the problem $(\mc P_\la)$. Here again we follow the regularising techniques as in the last section. Define
		\begin{equation*}\label{e4.1}
			\Lambda^a =\sup\{\la >0 : (\mc P_\la)~\text{has at least one solution}\}. 
		\end{equation*}
		We have
		\begin{Lemma}\label{l4.1}
			$0 < \Lambda^a <\infty.$
		\end{Lemma}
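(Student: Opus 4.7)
We prove the two inequalities $\La^a > 0$ and $\La^a < \infty$ separately.

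For $\La^a > 0$, we construct ordered sub- and supersolutions to $(\mc P_\la)$ for $\la$ small and recover a solution through the regularized problem $(\mc P_{\la,\e})$ by a Perron-type argument akin to Lemma \ref{l3.3}. The minimal singular solution $u_\la$ from Proposition \ref{p3.1} is a subsolution, since the Choquard term is nonnegative. For the supersolution, take $v_0 \in H_0^1(\Om) \cap L^\infty(\Om)$ solving $-\De v_0 = 1$ in $\Om$ with zero boundary values, and set $\bar u := u_\la + K v_0$ for some $K > 0$ to be chosen. Since $t \mapsto \chi_{\{t<a\}} t^{-\ga}$ is nonincreasing and $\bar u \geq u_\la$, the singular contribution of $\bar u$ is dominated by that of $u_\la$, so
\begin{align*}
-\De \bar u \geq \la \chi_{\{\bar u < a\}} \bar u^{-\ga} + K.
\end{align*}
The Choquard contribution of $\bar u$ is at most $\la C_\mu \|\bar u\|_\infty^{2\cdot 2_\mu^\ast - 1}$, where $C_\mu := \sup_{x\in\Om} \I\Om |x-y|^{-\mu}\,dy < \infty$ since $\mu < n$. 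A scaling comparison with the pure singular solution $\la^{1/(\ga+1)} \hat u_1$ (valid once $\|u_\la\|_\infty < a$, so that the cutoff $\chi_{\{u<a\}}$ becomes immaterial) gives $\|u_\la\|_\infty \to 0$ as $\la \to 0$. Fixing $K = 1$, the inequality $K \geq \la C_\mu \|\bar u\|_\infty^{2\cdot 2_\mu^\ast - 1}$ thus holds for every sufficiently small $\la$, making $\bar u$ a supersolution.

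For $\La^a < \infty$, let $u$ be a weak solution of $(\mc P_\la)$. Since the Choquard nonlinearity is nonnegative, $u$ is a supersolution of $(\mc S_\la)$, so Theorem \ref{wcp} yields $u \geq u_\la$ in $\Om$. Restricting to $\la \geq 1$ (enough to deduce finiteness) and using the monotonicity $\la \mapsto u_\la$ together with the estimate $u_1 \geq M_0 \phi_\ga$ from Proposition \ref{p3.1}, we obtain $u \geq M_0 \phi_\ga$ with $M_0 > 0$ independent of $u$ and $\la$. Testing the equation against $e_1$ (after a $C_c^\infty$-approximation, exploiting that $u$ is bounded away from zero on compact subsets and that the Choquard integrand is finite by HLS and the $L^\infty$-bound on $u$), we get
\begin{align*}
\la_1 \I\Om u\, e_1 \,dx = \la \I\Om \chi_{\{u<a\}} u^{-\ga} e_1 \,dx + \la \I\Om \I\Om \frac{u^{2_\mu^\ast}(y) u^{2_\mu^\ast - 1}(x) e_1(x)}{|x-y|^\mu}\,dx\,dy.
\end{align*}
We split the right-hand side over $\{u<a\}$ and $\{u \geq a\}$. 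On $\{u<a\}$, $u^{-\ga} \geq a^{-\ga-1} u$. On $\{u \geq a\}$, $u^{2_\mu^\ast - 1} \geq a^{2_\mu^\ast - 2} u$, and the bound $u \geq M_0 \phi_\ga$ gives
\begin{align*}
\I\Om \frac{u^{2_\mu^\ast}(y)}{|x-y|^\mu}\,dy \geq M_0^{2_\mu^\ast} \I\Om \frac{\phi_\ga^{2_\mu^\ast}(y)}{|x-y|^\mu}\,dy \geq H_0 > 0,
\end{align*}
the last step following from continuity on $\ov \Om$ of the Riesz potential of $\phi_\ga^{2_\mu^\ast} \in L^\infty(\Om)$ (since $\mu < n$) combined with its strict positivity. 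Combining both pieces, the right-hand side is $\geq \la c_0 \I\Om u\, e_1$ with $c_0 := \min\{a^{-\ga-1}, M_0^{2_\mu^\ast} a^{2_\mu^\ast - 2} H_0\} > 0$, so $\la \leq \la_1 / c_0$, yielding $\La^a \leq \max\{1, \la_1/c_0\} < \infty$.

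The main technical obstacle is legitimizing $e_1$ as a test function in the weak formulation: this is subtle since $u$ need not lie in $H_0^1(\Om)$ when $\ga \geq 3$ and $u^{-\ga}$ is singular at the boundary. The cure is to test against $\psi_k \in C_c^\infty(\Om)$ with $0 \leq \psi_k \uparrow e_1$, use the local boundedness of $u^{-\ga}$ on $\text{supp}\,\psi_k$ together with monotone convergence on the singular term, and invoke the HLS bound combined with $u \in L^\infty(\Om)$ for the Choquard term. The uniform-in-$\la$ lower bound $u \geq M_0 \phi_\ga$ (for $\la \geq 1$) is the other key input, ensuring a positive lower bound for the nonlocal factor in the Choquard term independent of the particular solution.
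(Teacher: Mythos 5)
Your proof is correct and follows the same overall strategy as the paper: sub- and supersolutions built from the purely singular problem plus the torsion function for $\La^a>0$, and testing against $e_1$ for $\La^a<\infty$. The differences are in execution. For $\La^a>0$ the paper drops the characteristic function, solves $-\De u=\la\big(u^{-\ga}+\text{Choquard}\big)$ by monotone iteration between $w_\la$ and $w_\la+z$, and only at the end observes that $|\bar u_\la|_\infty\to 0$ forces the solution below $a$ so that the cutoff is inactive; you keep the cutoff and use the monotonicity of $t\mapsto\chi_{\{t<a\}}t^{-\ga}$ instead, which is an equivalent route (just make sure the sub/supersolution pair is also ordered for the regularized nonlinearity $\chi_\e(\cdot-a)$, which holds for small $\la$ and $\e$ since then $u_\la<a-\e$ everywhere). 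For $\La^a<\infty$ your argument is actually tighter than the paper's: the paper bounds the right-hand side below by $\la M\int_\Om v_\la e_1$ by invoking superlinearity of $t\mapsto m t^{2_\mu^\ast-1}+\chi_{\{t<a\}}t^{-\ga}$, where $m=d_\Om^{-\mu}\int_\Om v_\la^{2_\mu^\ast}$ depends on the unknown solution, so the resulting constant is not visibly uniform over solutions; your splitting over $\{u<a\}$ (where $u^{-\ga}\geq a^{-\ga-1}u$) and $\{u\geq a\}$ (where the Riesz potential is bounded below by $H_0>0$ thanks to the uniform comparison $u\geq M_0\phi_\ga$ with the minimal singular solution) supplies exactly the missing uniformity, and you also address the legitimacy of $e_1$ as a test function, which the paper passes over. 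One caveat shared by both arguments: the linear lower bound on $\{u\geq a\}$ (your $u^{2_\mu^\ast-1}\geq a^{2_\mu^\ast-2}u$, the paper's superlinearity claim) requires $2_\mu^\ast\geq 2$, i.e. $\mu\leq 4$; for $\mu>4$ neither version of the estimate works as written.
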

		\begin{proof}
			Let $(\mc P_\la)$ admit a solution $v_\la$.  Multiplying $(\mc P_\la)$ by $e_1$, we get
			\begin{align}\nonumber \label{4.2}
				\la_1 \I\Om v_\la e_1 dx=&\la \left(\I\Om \I \Om \frac{v_\la^{2_\mu^\ast}(y)v_\la^{2_\mu^\ast-1}(x)e_1(x)}{|x-y|^\mu}dxdy +\I\Om \chi_{\{v_\la<a\}}v_\la^{-\gamma}e_1(x)dx \right)\\
				&\geq	\la \left(\I\Om\left( \left(\frac{1}{d_\Om^\mu}\I\Om v_\la^{2_\mu^\ast}(y)dy\right)v_\la^{2_\mu^\ast-1}(x)+\chi_{\{v_\la<a\}}v_\la^{-\gamma}(x)\right)e_1(x)dx\right).
			\end{align}
		Now for any $m>0$ by noting the superlinear nature of the map $t \ra m t^{2_\mu^\ast-1}+\chi_{\{t<a\}}t^{-\gamma}$ at infinity, we guarantee the existence of a constant $ M=M(a)>0$ such that $ m t^{2_\mu^\ast-1}+\chi_{\{t<a\}}t^{-\gamma} >M t$. Employing this observation in \eqref{4.2}, we conclude that
		\begin{align*}
			\la_1\I\Om v_\la e_1dx \geq \la M \I\Om v_\la e_1 dx. 
		\end{align*}
			This implies $\Lambda^a <\infty$. Next we show that $0<\La^a$. For this, we consider the following singular problem without the jump discontinuity
			\begin{equation}\label{e4.2}
				\begin{cases}
					-\De u =\la \left(u^{-\gamma} +\left(\ds\I\Om  \frac{u^{2_\mu^\ast}(y)}{|x-y|^\mu}dy\right) u^{2_\mu^\ast-1}(x)\right)~\text{in}~\Om,
					u>0~\text{in}~\Om,~u=0~\text{on}~\partial \Om.
				\end{cases}
			\end{equation}
		It is well known (see Theorems $1.1$, $2.2$ and $2.5$ in \cite{CRT}) that there exists a unique $w_\la \in C_0(\overline{\Omega})\cap C^2(\Omega)$ solving the following singular problem for all $\la>0$:
			\begin{equation*}
			\left\{	-\De w = \la w^{-\gamma},~w >0~\text{in}~\Om,~v=0~\text{on}~\partial \Om.\right.
			\end{equation*}
		Also by Dini's Theorem, we have that $w_\la \ra 0$ uniformly in $\Om$ as $\la  \ra 0^+$. Clearly, $w_\la$ is a subsolution to \eqref{e4.2}. Next let $z \in H_0^1(\Om)$ solve
			\begin{equation*}
				\begin{cases}
					-\De   z =1,~z>0~\text{in}~\Om,~z=0~\text{in}~\partial \Om.
				\end{cases} 
			\end{equation*}
			Define $z_\la =w_\la + z$. Next we claim that there exists $\hat{\la} >0$ small such that for $\la <\hat{\la}$, $z_\la$ is a supersolution to \eqref{e4.2}. The choice of $\hat{\la}$ is such that $\hat{\la} (|w_\la+z|_\infty)^{22_\mu^\ast-1} \hat{M} \leq 1$, where $\hat{M}$ is such that $\left|\ds \I\Om \frac{dy}{|x-y|^\mu}\right| < \hat{M}$. Note that the choice of such  $\hat{M}$ is possible since $\Om$ is bounded. Then for $\la <\hat{\la}$, we have
			\begin{align*}
				-\De z_\la = \la w_\la^{-\gamma} +1 \geq \la\left(z_\la^{-\gamma}+(|w_\la+z|_\infty)^{22_\mu^\ast-1}\hat{M}\right)\geq\la\left(z_\la^{-\gamma}+ \I\Om \frac{z_\la^{2_\mu^\ast}(y)z_\la^{2_\mu^\ast-1}(x)}{|x-y|^\mu}dy\right).
			\end{align*}
			This completes the proof of the claim. Let $ \mc K_\la =\{u \in H_0^1(\Om): w_\la \leq u\leq z_\la~\text{in}~\Om\}$. Clearly, $\mc K_\la$ is a closed convex (hence weakly closed) set in $H_0^1(\Om)$. Now, we define the following iterative scheme for all $\la <\hat{\la}$:
			\begin{align*}
				\begin{cases}
					u_0 =w_\la,\\
					-\Delta u_k-\la u_k^{-\gamma} =\la \ds \I\Om \frac{u_{k-1}^{2_\mu^\ast}(y)u_{k-1}^{2_\mu^\ast -1}(x)}{|x-y|^\mu}dy,~u _k>0~\text{in}~\Om,\\
					u_k =0~\text{on}~\partial \Om,~k=1,2,3,\ldots
				\end{cases}
			\end{align*}
			The above scheme is well defined as we can solve for $u_n$ is closed convex set $\mc  K_\la$. Using the monotonicity of the operator $-\De u-\la u^{-\gamma}$, we have that the sequence $\{u_k\}_{k\in \mb N}$ is nondecreasing and $w_\la \leq u_k \leq z_\la$ for all $k$. In particular,
				$\{u_k\}_{k \in \mb N}$ is uniformly bounded in $C^\al(\bar{\Om})$. By the Ascoli-Arzela theorem, $u_k \ra 
				\bar u_\la \in C_0(\Om)$ as $k \ra  \infty$ and $w_\la \leq \bar u_\la \leq z_\la$. Now following the arguments as in the proof of Proposition \ref{p3.2}, we conclude that $\bar u_\la$ is a solution of \eqref{e4.2}. Noting that $|\bar u_\la|_\infty \ra 0$ for $\la \ra 0$, $\bar u_\la$ solves $(\mc P_\la)$ for small $\la$ and hence $\La^a >0$.   \QED
		\end{proof}
		Now we consider the following perturbed regular problem associated to $(\mc P_\la)$,
		\begin{align*}
			(\mc P_{\la,\e})	\begin{cases}	-\De u = \la \left( \ds\I\Om \frac{u^{2_\mu^\ast}(y)u^{2^\ast-1}(x)}{|x-y|^\mu}dy+\chi_\e(u-a)u^{-\gamma}\right),\\
				u \equiv 0~\text{on}~\partial\Om,~u> 0~\text{in}~\Om,
			\end{cases}
		\end{align*}
		where $\chi_\e$ is defined as in \eqref{eq3.2}. The formal energy functional $J_{\la,\e}$ associated to $(\mc P_{\la,\e})$ is defined as
		\begin{align*}
			J_{\la,\e}(u)= \frac12 \|u\|^2 -\la \I{\Om}H_\e(u)-\frac{\la}{22_\mu^*} \I\Om\I{\Om}\frac{|u|^{2_\mu^\ast}(y)|u|^{2^\ast}(x)}{|x-y|^\mu}dxdy,
		\end{align*}
		where $H_\e$ is defined as in \eqref{3.4}.\\
		We now have the following existence result:
		\begin{Lemma}\label{l4.2}
		$(\mc P_{\la})$ admits a solution $v_{\la}$ for all $\la \in (0,\La^a)$. Moreover, $v_\la \sim \phi_\la$ and satisfies the Sobolev regularity result as stated in Theorem \ref{tmr}.
		\end{Lemma}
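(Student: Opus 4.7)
For $\la\in(0,\La^a)$, by the very definition of $\La^a$ there exists $\bar\la\in(\la,\La^a)$ for which $(\mc P_{\bar\la})$ admits a solution $\bar u$. Since $\bar\la>\la$ and $\chi_{\{\bar u<a\}}\geq\chi_\e(\bar u-a)$ pointwise, $\bar u$ is a supersolution of $(\mc P_{\la,\e})$ and, dropping the nonnegative Hartree term, also of $(\mc S_{\la,\e})$. Applying Theorem \ref{wcp} then gives $u_{\la,\e}\leq\bar u$ a.e.\ in $\Om$, where $u_{\la,\e}$ is the solution of $(\mc S_{\la,\e})$ supplied by Proposition \ref{p3.2}; since the Hartree term is nonnegative, $u_{\la,\e}$ is itself a subsolution of $(\mc P_{\la,\e})$. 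This gives the ordered pair needed for a Perron-type construction.

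When $0<\gamma<3$ (so $u_{\la,\e}\in H_0^1(\Om)$), my plan is to minimize the energy $J_{\la,\e}$ on the weakly closed convex set $\mc M_\e=\{v\in H_0^1(\Om):u_{\la,\e}\leq v\leq\bar u\}$, following the scheme of Lemma \ref{l3.3}. The only novelty compared to Lemma \ref{l3.3} is the Hartree contribution, which is sequentially weakly continuous on $\mc M_\e$ because elements of $\mc M_\e$ are $L^\infty$-bounded by $\bar u$ and Hardy-Littlewood-Sobolev combined with compact Sobolev embedding transfers weak $H_0^1$ convergence into convergence of the double integral. A truncation variation $\eta_\kappa$ as in Lemma \ref{l3.3} shows that the minimizer $v_{\la,\e}$ solves $(\mc P_{\la,\e})$ weakly. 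For $\gamma\geq3$, $u_{\la,\e}\notin H_0^1(\Om)$, so I first further regularize by replacing $u^{-\gamma}$ with $(u+1/k)^{-\gamma}$, obtain solutions $v_{\la,\e,k}\in\{v\in H_0^1(\Om):u_{\la,\e,k}\leq v\leq \bar u\}$ by the same minimization, and then let $k\to\infty$ using exactly the argument of Proposition \ref{p3.2}(2) (based on Lemma \ref{le3.4}).

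The passage $\e\to 0$ relies on testing $(\mc P_{\la,\e})$ against $(v_{\la,\e}+\nu)^{2\omega-1}-\nu^{2\omega-1}$ for $\omega>\max\{(\gamma+1)/4,1\}$ and sending $\nu\to 0^+$. The uniform $L^\infty$ bound $v_{\la,\e}\leq\bar u$ controls the Hartree term uniformly, while the lower bound $v_{\la,\e}\geq u_{\la,\e}\geq M\phi_\gamma$ together with Hardy's inequality controls the singular contribution exactly as in Lemma \ref{le3.4}(v), yielding $\{v_{\la,\e}^\omega\}$ uniformly bounded in $H_0^1(\Om)$; the choice $\omega=1$ is admissible precisely when $\gamma<3$, giving in that case a uniform $H_0^1$ bound on $\{v_{\la,\e}\}$ itself. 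Extracting a subsequence with $v_{\la,\e_n}\to v_\la$ a.e.\ and $v_{\la,\e_n}^\omega\rightharpoonup v_\la^\omega$ weakly, the local lower bound $v_{\la,\e_n}\geq M(K)\phi_\gamma$ on compacts $K\Subset\Om$ allows dominated convergence in the singular term and the $L^\infty$-bound plus HLS delivers the Hartree limit. The sandwich $u_\la\leq v_\la\leq\bar u$ together with Proposition \ref{p3.1} yields $v_\la\sim\phi_\gamma$, and the uniform estimate on $\{v_{\la,\e}^\omega\}$ transfers to the Sobolev-type regularity of $v_\la$ claimed in Theorem \ref{tmr}.

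I expect the main obstacle to be the passage to the limit in $\chi_\e(v_{\la,\e}-a)$, which only converges to $\chi_{\{v_\la<a\}}$ off the level set $\{v_\la=a\}$. I plan to argue that $|\{v_\la=a\}|=0$: on this set $\nabla v_\la=0$ a.e., so the equation reduces to $0=\la\,(\text{Hartree term at }v_\la)$ a.e., and the right-hand side is strictly positive, contradiction unless the level set is null. Once this is secured, the weak formulation of $(\mc P_\la)$ in Definition \ref{e2.3} is recovered from $(\mc P_{\la,\e_n})$ by standard density arguments together with Remark \ref{r2.4}.
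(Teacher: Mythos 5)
Your proposal follows essentially the same route as the paper: order the solution $u_{\la,\e}$ of the purely singular regularized problem below a solution $\bar v$ of $(\mc P_{\bar\la})$ with $\bar\la>\la$ via Theorem \ref{wcp}, obtain a solution of $(\mc P_{\la,\e})$ by minimizing $J_{\la,\e}$ over the resulting order interval, and pass to the limit $\e\to0$ using the uniform $L^\infty$ bound, the barriers $u_{\la,\e}\le v_{\la,\e}\le w$ for the boundary behaviour, and the test functions of Lemma \ref{le3.4}(v) for the Sobolev regularity. Your extra care for $\gamma\ge3$ (inserting the $(u+1/k)^{-\gamma}$ regularization, which is genuinely needed since no $H_0^1(\Om)$ function can dominate $\phi_\gamma$ when $\gamma\ge3$, so the order interval $\mc M_\e\subset H_0^1(\Om)$ used in the paper is empty there) and your explicit argument that $|\{v_\la=a\}|=0$ are refinements of steps the paper leaves implicit, not a different method.
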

		\begin{proof}
			We first show that given any $0<\e<\e_0(a)$, where $\e_0$ is obtained in Proposition \ref{p3.2} and $\la \in (0,\La^a)$, the approximating problem $(\mc P_{\la,\e})$ admits a solution $v_{\la,\e}$.
			Let $u_{\la,\e}$ be the solution of $(\mc S_{\la,\e})$ as obtained in Proposition \ref{p3.2}.
 Given any $\la \in (0,\La^a)$, there exists $\bar{\la} >\la$ such that $(\mc P_{\bar{\la}})$ admits a solution $\bar{v}$ and by the definition of $\chi_\e$, $\bar{v}$ is a supersolution of $(\mc P_{\la,\e})$. Then by Theorem \ref{wcp}, we see that $u_{\la,\e} \leq \bar{v}$.
  Now the existence of a solution $v_{\la,\e}$ of $(\mc P_{\la,\e})$ is obtained as a local minimizer of $J_{\la,\e}$ over the convex set $\mc M_\e =\{u \in H_0^1(\Om) : u_{\la,\e} \leq u\leq \bar{v}\}$. As $ v_{\la,\e}$ solves $(\mc P_{\la,\e})$, it is easy to check that $\{v_{\la,\e}\}_\e$ is bounded in $H_0^1(\Om)$ (thanks to $\bar{v}\in L^\infty(\Omega)$) and hence weakly converges to some $v_\la \in H_0^1(\Om)$. Then following the convergence arguments of Proposition \ref{p3.1}, we conclude that $v_\la$ is a solution of $(\mc P_\la)$.\\
  Now since $|v_{\la,\e}|_\infty \leq M$, where $M$ is independent of $\e$ (because that same is true for $u_{\la,\e}$ by Proposition \ref{p3.2}), we see that $u_{\la,\e} \leq v_{\la,\e} \leq w$, where $w$ is the solution of 
  	\begin{align*}
  		-\De w =\la w^{-\gamma}+ K,~w>0~\text{in}~\Om,~w=0~\text{on}~\partial \Om,
  	\end{align*}
  for some appropriate $K$. Also since $u_{\la,\e} \sim \phi_\la$ and $w \sim \phi_\la$, we conclude that $v_{\la,\e} \sim \phi_\la$ for every $\e$ and hence $v_\la \sim \phi_\gamma$.
  	 Finally, the Sobolev regularity results for $v_{\la,\e}$ follows on the similar lines of the proof of Lemma \ref{le3.4} item $v)$ using the fact that $v_{\la,\e} \in L^\infty(\Om)$ independent of $\e$ and for $v_\la$ using the arguments as in the proof of Proposition \ref{p3.2} $(2)$. \QED
		\end{proof}
		Next following the proof of Proposition $3.1$ and Lemma $3.4$ of \cite{DPST}, we have the following lemma:
		\begin{Lemma}
			For any $0<\la<\La^a$ and $0<\mu \leq\min\{n,4\}$, $ J_\la(v_\la) =\min_{v \in \mc M_0} J_\la(v)$, where $\mc M_0 =\{u \in H_0^1(\Om):u_\la \leq u\leq \bar{v}\}$ where $u_\lambda$ is as in Proposition \ref{p3.1}.
		\end{Lemma}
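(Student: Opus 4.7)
The plan is to derive $J_\la(v_\la)\leq J_\la(v)$ for every $v\in\mc M_0$ (the reverse inequality being immediate since $v_\la\in\mc M_0$) by passing to the limit as $\e\to 0^+$ in the minimization characterization of $v_{\la,\e}$ on $\mc M_\e=\{u_{\la,\e}\leq u\leq\bar v\}$ provided by Lemma~\ref{l4.2}.

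First, for an arbitrary $v\in\mc M_0$, I would construct a competitor $v_\e\in\mc M_\e$ converging to $v$. Since $\e\mapsto u_{\la,\e}$ is nonincreasing (Proposition~\ref{p3.2}), $u_{\la,\e}$ may exceed $v$ pointwise, so I set
\[
v_\e:=\max\{v,\,u_{\la,\e}\}.
\]
Applying Theorem~\ref{wcp} with $\bar v$ as a supersolution of $(\mc S_{\la,\e})$ gives $u_{\la,\e}\leq\bar v$, whence $v_\e\in\mc M_\e$. The monotone convergence $u_{\la,\e}\downarrow u_\la\leq v$ yields $v_\e\to v$ pointwise; furthermore $\nabla(v_\e-v)$ is supported on the shrinking set $\{v<u_{\la,\e}\}$, so the uniform $H_0^1$-bound on $u_{\la,\e}$ from Proposition~\ref{p3.2} forces strong convergence $v_\e\to v$ in $H_0^1(\Om)$.

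Next I would use the minimality inequality $J_{\la,\e}(v_{\la,\e})\leq J_{\la,\e}(v_\e)$ and pass to the limit on each side. For the right-hand side, the strong $H_0^1$-convergence $v_\e\to v$ handles the Dirichlet term; the uniform $L^\infty$ bound on $v_\e$ (inherited from $v,\bar v\in L^\infty(\Om)$) together with the lower bound $v_\e\succ\phi_\gamma$ allow dominated convergence for $\int_\Om H_\e(v_\e)\to\int_\Om H(v)$ via the integrable majorant $C(1+\phi_\gamma^{1-\gamma})\in L^1(\Om)$ (whose integrability demands $\gamma<3$); the Choquard term passes to the limit by dominated convergence using the $L^\infty$ bound and Proposition~\ref{hls}. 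Hence $J_{\la,\e}(v_\e)\to J_\la(v)$. For the left-hand side, $v_{\la,\e}\rightharpoonup v_\la$ in $H_0^1(\Om)$ with the same uniform $L^\infty$ bound and lower bound $v_{\la,\e}\succ\phi_\gamma$ (Lemma~\ref{l4.2}); weak lower semicontinuity of $\|\cdot\|^2$, dominated convergence for the singular term, and continuity of the Choquard term give $J_\la(v_\la)\leq\liminf_{\e\to 0^+}J_{\la,\e}(v_{\la,\e})$. Chaining these two inequalities concludes.

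I expect the main obstacle to be the rigorous passage to the limit in the singular and discontinuous term $\int_\Om H_\e(v_{\la,\e})$: showing $H_\e(v_{\la,\e}(x))\to H(v_\la(x))$ a.e.\ requires $|\{v_\la=a\}|=0$ (otherwise $H_\e$ jumps there as $\e\to 0$), and producing the $L^1$-majorant relies critically on the uniform lower bound $v_{\la,\e}\succ\phi_\gamma$ together with $\gamma<3$. The exponent restriction $\mu\leq\min\{n,4\}$, equivalent to $2_\mu^\ast\geq 2$, enters in this DPST-type argument to control the nonlocal critical Choquard term in the semicontinuity step.
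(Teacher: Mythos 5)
Your overall strategy --- exploit the minimization of $J_{\la,\e}$ over $\mc M_\e$ from Lemma~\ref{l4.2} and pass to the limit $\e\to 0^+$ on both sides of $J_{\la,\e}(v_{\la,\e})\leq J_{\la,\e}(\cdot)$ --- is the natural one and is consistent with how $v_\la$ is actually constructed; the paper itself gives no proof of this lemma and merely cites Proposition 3.1 and Lemma 3.4 of \cite{DPST}, so your limiting argument is a legitimate reconstruction. The lower semicontinuity on the left (weak lsc of the Dirichlet norm, dominated convergence for the singular and Choquard terms using $v_{\la,\e}\sim\phi_\gamma$, $\gamma<3$ and the uniform $L^\infty$ bound) is handled correctly.

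There is, however, a concrete error in your reading of the monotonicity, and it is exactly at the one place where you do nontrivial work. ``$\e\mapsto u_{\la,\e}$ nonincreasing'' means $u_{\la,\e_1}\geq u_{\la,\e_2}$ for $\e_1<\e_2$, so $u_{\la,\e}$ \emph{increases} as $\e\downarrow 0$; moreover $\chi_\e(t-a)\leq\chi_{\{t<a\}}$, so every solution of $(\mc S_\la)$ is a supersolution of $(\mc S_{\la,\e})$ and Theorem~\ref{wcp} gives $u_{\la,\e}\leq u_\la$ for every $\e$. Thus $u_{\la,\e}\uparrow u_\la$, not $u_{\la,\e}\downarrow u_\la$, and consequently $\mc M_0\subseteq\mc M_\e$ for every $\e$: your competitor $v_\e=\max\{v,u_{\la,\e}\}$ is identically equal to $v$ and the whole construction is vacuous. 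This is fortunate rather than fatal, because the step you invoke to justify $v_\e\to v$ strongly in $H_0^1(\Om)$ --- a uniform $H_0^1$ bound on $u_{\la,\e}$ plus the shrinking of $\{v<u_{\la,\e}\}$ --- does not by itself give $\int_{\{v<u_{\la,\e}\}}|\na u_{\la,\e}|^2\to 0$ (you would need equi-integrability of the gradients); had the monotonicity gone the way you claim, this would be a genuine gap. Two smaller points: you should record that the argument needs $v_{\la,\e}$ to be a \emph{global} minimizer of $J_{\la,\e}$ over the convex set $\mc M_\e$ (Lemma~\ref{l4.2} says ``local minimizer'', but the construction there is by global minimization over $\mc M_\e$, which is what you must use); and no condition $|\{v_\la=a\}|=0$ is needed for $\int_\Om H_\e(v_{\la,\e})\to\int_\Om H(v_\la)$, since $|H_\e(t)-H(t)|\leq\int_{a-\e}^{a}s^{-\gamma}\,ds\to 0$ uniformly --- only $H'$, not $H$, jumps at $t=a$.
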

		Now we claim that $v_\la$ is a local minimum of $\mc J_\la$ in $H_0^1(\Om)$. We have
		\begin{Theorem}
			Let $a>0$ and $0<\mu \leq \min\{n,4\}$. Then for $\la \in (0,\La^a)$, $v_\la$ is a local minimum of $J_\la$ in $H_0^1(\Om)$.
		\end{Theorem}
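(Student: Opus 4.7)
The plan is to argue by contradiction in the classical Brezis--Nirenberg / truncation spirit adapted to the present setting (cf.\ \cite{DPST}). Suppose there is a sequence $\{v_k\}_{k\in\mb N} \subset H_0^1(\Om)$ with $v_k \to v_\la$ in $H_0^1(\Om)$ and $J_\la(v_k) < J_\la(v_\la)$. Set
\[
\hat v_k := \min\{\bar v,\max\{u_\la, v_k\}\}\in \mc M_0,
\]
so that by the previous lemma $J_\la(\hat v_k) \geq J_\la(v_\la) > J_\la(v_k)$. Writing $\phi_k^+ := (v_k-\bar v)^+$ and $\phi_k^- := (u_\la - v_k)^+$, one has $v_k = \hat v_k + \phi_k^+ - \phi_k^-$ with $\phi_k^\pm \geq 0$ and disjoint supports $\Om_k^\pm:=\{v_k>\bar v\}, \{v_k<u_\la\}$. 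From $v_k\to v_\la$ in $H_0^1(\Om)$ and $u_\la\leq v_\la\leq \bar v$ it follows that $\phi_k^\pm\to 0$ in $H_0^1(\Om)$.

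I next expand $J_\la(\hat v_k) - J_\la(v_k)$ term by term. Using $\hat v_k = \bar v$ on $\Om_k^+$, $\hat v_k = u_\la$ on $\Om_k^-$ and disjointness of supports,
\[
\tfrac{1}{2}(\|\hat v_k\|^2-\|v_k\|^2) = -\int_\Om \nabla\bar v\cdot\nabla\phi_k^+\,dx + \int_\Om \nabla u_\la\cdot\nabla\phi_k^-\,dx - \tfrac{1}{2}(\|\phi_k^+\|^2 + \|\phi_k^-\|^2).
\]
Since $\bar v$ solves $(\mc P_{\bar\la})$ with $\bar\la > \la$, it is a supersolution of $(\mc P_\la)$; since $u_\la$ solves $(\mc S_\la)$ and the Hartree contribution is nonnegative, $u_\la$ is a subsolution of $(\mc P_\la)$. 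By Remark~\ref{r2.4} and $u_\la,\bar v\succ \phi_\gamma$, the functions $\phi_k^\pm\in H_0^1(\Om)$ are admissible test functions, so the two gradient integrals above are replaced by the corresponding singular and Hartree nonlinearities evaluated at $\bar v$ and $u_\la$.

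Monotonicity of $h(t):=\chi_{\{t<a\}}t^{-\gamma}$ on $(0,\infty)$ gives $H(v_k)-H(\bar v)\leq h(\bar v)\phi_k^+$ on $\Om_k^+$ and $H(v_k)-H(u_\la)\leq -h(u_\la)\phi_k^-$ on $\Om_k^-$, which controls the singular term. For the Hartree functional $G\in C^1(H_0^1(\Om))$, Taylor's formula at $\hat v_k$ yields $G(v_k)-G(\hat v_k) = G'(\hat v_k)[\phi_k^+-\phi_k^-]+R_k$; using $\hat v_k\leq \bar v$ on $\Om_k^+$ and $\hat v_k\geq u_\la$ on $\Om_k^-$ one bounds
\[
G'(\hat v_k)[\phi_k^+-\phi_k^-] \leq \int_\Om\!\int_\Om \frac{\bar v^{2_\mu^*}(y)\bar v^{2_\mu^*-1}(x)\phi_k^+(x)}{|x-y|^\mu}\,dx\,dy - \int_\Om\!\int_\Om \frac{u_\la^{2_\mu^*}(y)u_\la^{2_\mu^*-1}(x)\phi_k^-(x)}{|x-y|^\mu}\,dx\,dy.
\]
Summing everything, the singular and Hartree terms produced by the sub/supersolution tests cancel exactly the above upper bounds on $\la(F(v_k)-F(\hat v_k))$ and $\la G'(\hat v_k)[\phi_k^+-\phi_k^-]$, leaving
\[
J_\la(\hat v_k) - J_\la(v_k) \leq -\tfrac{1}{2}(\|\phi_k^+\|^2 + \|\phi_k^-\|^2) + \la R_k.
\]

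The main obstacle, and where the restriction $\mu\leq\min\{n,4\}$ enters, is to prove $\la R_k = o(\|\phi_k^+\|^2 + \|\phi_k^-\|^2)$, which would render the right-hand side negative for large $k$ and contradict $J_\la(\hat v_k) > J_\la(v_k)$. One handles this by estimating the quadratic Taylor remainder of the nonlocal Hartree functional through the Hardy--Littlewood--Sobolev inequality combined with Sobolev embeddings and a Brezis--Lieb-type decomposition applied to $v_k,\hat v_k$ near $v_\la$. The condition $\mu\leq 4$ keeps the exponents arising in $G''$ subcritical with respect to $H_0^1(\Om)$, so that the strong convergence $\phi_k^\pm\to 0$ in $H_0^1(\Om)$ propagates, via interpolation, to sufficiently fast vanishing in the $L^q$ norms controlling $R_k$.
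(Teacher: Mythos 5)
Your proposal follows essentially the same route as the paper: the same truncation $\hat v_k=z_k=\max\{u_\la,\min\{v_k,\bar v\}\}$, the same decomposition $v_k=\hat v_k+\phi_k^+-\phi_k^-$ with disjointly supported remainders, the cancellation of the cross terms via the sub/supersolution inequalities for $u_\la$ and $\bar v$, the monotonicity of $t\mapsto\chi_{\{t<a\}}t^{-\gamma}$ for the singular part, and an HLS/mean-value estimate of the quadratic Hartree remainder, with $\mu\le 4$ entering exactly through $2_\mu^\ast\ge 2$. Two caveats: the crux of the argument --- showing $R_k=o\bigl(\|\phi_k^+\|^2+\|\phi_k^-\|^2\bigr)$ --- is only described, not carried out (the paper does it explicitly via the mean value theorem, HLS and H\"older, producing terms like $\bigl(\int_{\overline A_k}\bar v^{2^\ast}\bigr)^{(2_\mu^\ast-2)/2_\mu^\ast}\|\phi_k^+\|^2+\|\phi_k^+\|^{2_\mu^\ast}$); and that estimate needs not only $\|\phi_k^\pm\|\to 0$ but also $|\{v_k>\bar v\}|,\,|\{v_k<u_\la\}|\to 0$ (the paper's Claim A), which does not follow merely from $v_k\to v_\la$ in $H_0^1(\Om)$ and $u_\la\le v_\la\le\bar v$ and must be argued separately.
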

		\begin{proof}
			We assume that $v_\la$ is not a local minimum of $J_\la$ in $H_0^1(\Om)$ and derive a contradiction. Let $\{v_k\} \subset H_0^1(\Om)$ be such that $v_k \ra v_\la$ in $H_0^1(\Om)$ and $J_\la(v_k) < J_\la(v_\la)$. For $\underline{v} =u_\la$ and solution $\overline{v}$ of $(\mc P_{\bar{\la}})$ where $0<\la<\bar{\la}<\La^a$, define $z_k =\max\{\underline{v},\min\{v_k,\overline{v}\}\}$, $\overline{w}_k =(v_k-\overline{v})^+$, $\underline{w}_k =(v_k -\underline{v})^-$, $\overline{A}_k =\text{supp}(\overline{w}_k)$ and $\underline{A}_k =\text{supp}(\underline{w}_k)$.\\
			\textbf{Claim A}: $|\overline{A}_k|$, $|\underline{A}_k|$ and $\| \overline{w}_k\| \ra 0$ as $k \ra \infty$.\\
			The proof of the claim can be proved on the similar lines of the proof of Theorem $2.2$ of \cite{DPST1} and hence omitted. Now note that $z_k \in \mc M_0 =\{u \in H^1_0(\Om):\underline{v} \leq u \leq \overline{v}\}$ and $v_k =z_k-\underline{w}_k +\overline{w}_k$.  Now
			 \begin{align}\nonumber
				J_\la (v_k) =& J_\la(z_k) + \frac12 \I {\overline{A}_k} \left(|\na v_k|^2-|\na \overline{v}|^2\right)dx +\frac12 \I {\underline{A}_k} \left(|\na v_k|^2-|\na \underline{v}|^2\right)dx -\la  \I {\overline{A}_k} (H (v_k)-H(\overline{v}))dx\\ \nonumber
				 &- \la \I {\underline{A}_k} (H (v_k)-H(\underline{v}))dx -\frac{\la}{22_\mu^\ast} \I\Om \I\Om \frac{v_k^{2_\mu^\ast}(x)v_k^{2_\mu^\ast}(y -z_k^{2_\mu^\ast}(x)z_k^{2_\mu^\ast}(y)}{|x-y|^\mu}dxdy\\ \nonumber
				\end{align}
			\begin{align}\label{4.6}\nonumber
				 =&J_\la(z_k) +\frac12\I{\overline{A}_k} |\na \overline{z}_k|^2 dx + \I {\overline{A}_k} \na \overline{v} \na \overline{z}_kdx+\frac12\I{\underline{A}_k} |\na \underline{z}_k|^2 dx + \I {\underline{A}_k} \na \underline{v} \na \underline{z}_kdx\\ \nonumber
				 &-\la  \I {\overline{A}_k} (H (\overline{v}+\overline{z}_k)-H(\overline{v}))dx- -\frac{\la}{22_\mu^\ast} \I\Om \I\Om \frac{(v_k^{2_\mu^\ast}(x)-z_k^{2_\mu^\ast}(x))v^{2_\mu^\ast}_k(y)}{|x-y|^\mu}dxdy\\ 
				 &- \la \I {\underline{A}_k} (H (\underline{v}_k-\underline{v})-H(\underline{v}))dx
				 -\frac{\la}{22_\mu^\ast} \I\Om \I\Om \frac{(v_k^{2_\mu^\ast}(x)-z_k^{2_\mu^\ast}(x))z^{2_\mu^\ast}_k(y)}{|x-y|^\mu}dxdy.
			\end{align}
		Employing the facts that $\underline{v}$ and $\overline{v}$ are respectively the sub- and supersolutions of $(\mc P_\la)$ we obtain from \eqref{4.6}
		\begin{align*}
			J_\la(v_k) \geq J_\la(z_k) + I_k + J_k,
		\end{align*}
	where 
	\begin{align*}
		I_k=& \frac{1}{2}\I{\overline{A}_k} |\na \overline{w}_k|^2 dx +\la  \I {\overline{A}_k} \left( \chi_{\{\overline{v}<a\}}\overline{v}^{-\gamma}\overline{w}_k-\left(H (\overline{v}+\overline{w}_k)-H(\overline{v})\right)\right)dx\\
		 &+\frac{\la}{2}\I \Om \I {\overline{A}_k} \frac{\overline{v}^{2_\mu^\ast}(y)\overline{v}^{2_\mu^\ast-1}(x)\overline{w}_k(x)-\frac{1}{2_\mu^\ast}\left((\overline{v}+\overline{w}_k)^{2_\mu^\ast}(x)-\overline{v}^{2_\mu^\ast}(x)\right)\left(v_k^{2_\mu^\ast}(y)+z_k^{2_\mu^\ast}(y)\right)}{|x-y|^\mu}dxdy
	\end{align*}
and 
\begin{align*}
		J_k=& \frac{1}{2}\I{\underline{A}_k} |\na \underline{w}_k|^2 dx -\la  \I {\underline{A}_k} \left( \left(\chi_{\{\underline{v}<a\}}\underline{v}^{-\gamma}\underline{w}_k+H (\underline{v}-\underline{w}_k)\right)-H(\underline{v})\right)dx\\
	&-\frac{\la}{2}\I \Om \I {\underline{A}_k} \frac{\underline{v}^{2_\mu^\ast}(y)\underline{v}^{2_\mu^\ast-1}(x)\underline{w}_k(x)-\frac{1}{2_\mu^\ast}\left((\underline{v}-\underline{w}_k)^{2_\mu^\ast}(x)-\underline{v}^{2_\mu^\ast}(x)\right)\left(w_k^{2_\mu^\ast}(y)+z_k^{2_\mu^\ast}(y)\right)}{|x-y|^\mu}dxdy.
\end{align*}
Now we claim that $I_k,~J_k \geq 0$ for large $k$ which is a contradiction to our assumption that $J_\la(v_k) < J_\la(v_\la)$ for all $k$. We only show that $I_k\geq 0$. The case of $J_k \geq 0$ runs in a similar fashion.\\
 Dividing $\overline{A}_k$ into three subdomains, viz, $\overline{A}_k \cap \{x \in \Om: a <\overline{v}(x)\}$, $\overline{A}_k\cap \{x \in \Om: \overline{v}(x)\leq a \leq (\overline{v}+\overline{w}_k)(x)\}$ and $\overline{A}_k\cap \{x \in \Om: (\overline{v}+\overline{w}_k)(x)<a\}$, one can check that the second integral on the right hand side of $I_k$ is nonnegative. Now using the fact that $z_k \leq \overline{v}$ and the mean value theorem, we obtain, for some $\theta =\theta(x) \in (0,1)$  that
\begin{align*}
	I_{k,1}&=\I\Om \I {\overline{A}_k} \frac{\overline{v}^{2_\mu^\ast}(y)\overline{v}^{2_\mu^\ast-1}(x)\overline{w}_k(x)-\frac{1}{2_\mu^\ast}\left((\overline{v}+\overline{w}_k)^{2_\mu^\ast}(x)-\overline{v}^{2_\mu^\ast}(x)\right)z_k^{2_\mu^\ast}(y)}{|x-y|^\mu}dxdy\\
	 \geq &\I \Om \frac{\overline{v}^{2_\mu^\ast}(y)}{|x-y|^\mu}dy \left( \I{\overline{A}_k} \left(\overline{v}^{2_\mu^\ast-1}(x)\overline{w}_k(x)-\frac{1}{2_\mu^\ast}\left((\overline{v}+\overline{w}_k)^{2_\mu^\ast}(x)-\overline{v}^{2_\mu^\ast}(x)\right)\right)dx\right)\\
	=&- \I \Om \frac{\overline{v}^{2_\mu^\ast}(y)}{|x-y|^\mu}dy \I{\overline{A}_k}\left(\left(\overline{v} +\theta \overline{w}_k\right)^{2_\mu^\ast-1}(x)-\overline{v}^{2_\mu^\ast-1}(x)\right) \overline{w}_k(x)dx\\
\end{align*}
\begin{align*}
	 \geq& - \I \Om \frac{\overline{v}^{2_\mu^\ast}(y)}{|x-y|^\mu}dy \I{\overline{A}_K} (\overline{v} + \overline{w}_k)^{2_\mu^\ast-2}\overline{w}_k^2dx \geq -M \I\Om \I{\overline{A}_k} \frac{\overline{v}^{2_\mu^\ast}(y)\left(\overline{v}^{2_\mu^\ast-2}(x)+\overline{w}_k^{2_\mu^\ast-2}(x)\right)\overline{w}_k^2(x )}{|x-y|^\mu}dxdy.
\end{align*}
Now by the application of Hardy-Littlewood-Sobolev and H\"{o}lder's inequalities, we obtain
\begin{align}\label{4.7}
I_{k,1}\geq M_1|\overline{v}|_{2^\ast}^{2_\mu^\ast}\left(\left(\I{\overline{A}_k}\overline{v}^{2^\ast}dx\right)^\frac{2_\mu^\ast-2}{2_\mu^\ast}\|\overline{w}_k\|^2 + \|\overline{w}_k\|^{2_\mu^\ast}\right).
\end{align}
Also
\begin{align*}
	I_{k,2} =& \I\Om \I {\overline{A}_k} \frac{\overline{v}^{2_\mu^\ast}(y)\overline{v}^{2_\mu^\ast-1}(x)\overline{w}_k(x)}{|x-y|^\mu}dxdy-\frac{1}{2_\mu^\ast}\I{\Om\setminus \overline{A}_k} \I {\overline{A}_k}\frac{\left((\overline{v}+\overline{w}_k)^{2_\mu^\ast}(x)-\overline{v}^{2_\mu^\ast}(x)\right)\overline{v}^{2_\mu^\ast}(y)}{|x-y|^\mu}dxdy\\
	&-\frac{1}{2_\mu^\ast}\I{\overline{A}_k} \I {\overline{A}_k}\frac{\left((\overline{v}+\overline{w}_k)^{2_\mu^\ast}(x)-\overline{v}^{2_\mu^\ast}(x)\right)v_k^{2_\mu^\ast}(y)}{|x-y|^\mu}dxdy\\
   \geq&\I{\Om }\I {\overline{A}_k} \frac{\overline{v}^{2_\mu^\ast}(y)\left(\overline{v}^{2_\mu^\ast-1}(x)\overline{w}_k(x)-\frac{1}{2_\mu^\ast}\left((\overline{v}+\overline{w}_k)^{2_\mu^\ast}(x)-\overline{v}^{2_\mu^\ast}(x)\right)\right)}{|x-y|^\mu}dxdy\\
   	&-\frac{1}{2_\mu^\ast}\I{\overline{A}_k} \I {\overline{A}_k}\frac{\left((\overline{v}+\overline{w}_k)^{2_\mu^\ast}(x)-\overline{v}^{2_\mu^\ast}(x)\right)v_k^{2_\mu^\ast}(y)}{|x-y|^\mu}dxdy.
\end{align*}
Now the first integral in the last inequality can be estimated like $I_{k,1}$. In the following we will estimate the second integral. Again using the mean value theorem, Hardy-Littlewood-Sobolev and H\"{o}lder's inequalities, we have
\begin{align}\nonumber \label{4.8}
	\frac{1}{2_\mu^\ast}\I{\overline{A}_k} \I {\overline{A}_k}\frac{\left((\overline{v}+\overline{w}_k)^{2_\mu^\ast}(x)-\overline{v}^{2_\mu^\ast}(x)\right)v_k^{2_\mu^\ast}(y)}{|x-y|^\mu}dxdy
	 \leq&\I{\overline{A}_k} \I {\overline{A}_k}\frac{ (\overline{v} + \overline{w}_k)^{2_\mu^\ast-1}(x)\overline{w}_k(x)v_k^{2_\mu^\ast}(y)}{|x-y|^\mu}dxdy\\ \nonumber
	\leq &M \left( \|\overline{v}\|_{L^{2^\ast}(\overline{A}_k)}+\|\overline{w}_k\|_{L^{2^\ast}(\overline{A}_k)}\right)^\frac{(2_\mu^\ast-1)2^\ast}{2} \|\overline{w}_k\|^{22^\ast}\\
	=& o_k(1).
\end{align}
Now using \eqref{4.7}, \eqref{4.8} and Claim A, we deduce that $I_k \geq 0$. This completes the proof of the theorem. \QED
		\end{proof}
		\section{Existence of a second solution for $(\mc P_\la)$} \label{s5}
		This section is devoted to obtain a second solution for $(\mc P_\la)$ for $\la \in (0,\La^a)$. Here we restrict ourselves to the case $0<\gamma<3$. We obtain the second solution by translating the problem to the solution $v_\la$ obtained in the previous section. Precisely, we consider the following problem
		\begin{align*}
			(\hat{P}_\la) 
			\begin{cases}
				-\De w =\la \left(\chi_{\{w+v_\la <a\}}(w+v_\la)^{-\gamma}-\chi_{\{v_\la<a\}}v_\la^{-\gamma}\right)\\
				~~~~~~~~~~~~~+ \la \ds\left(\I\Om \frac{(w+v_\la)^{2_\mu^\ast}(y)(w+v_\la)^{2_\mu^\ast-1}(x)-v_\la^{2_\mu^\ast}(y)v_\la^{2_\mu^\ast-1}(x)}{|x-y|^\mu}dy\right)~\text{in}~\Om,\\
				w>0~\text{in}~\Om,~w=0~\text{on}~ \partial \Om. 
				\end{cases}
		\end{align*}
		Clearly, if $w_\la \in H_0^1(\Om)$ weakly solves $(\hat{P}_\la)$, then $w_\la+v_\la$ weakly solves $(\mc P_\la)$. Let us define, for $x \in \Om$,
		\begin{align*}
			&f(x,s)=\left(\chi_{\{s+v_\la <a\}}(s+v_\la)^{-\gamma}-\chi_{\{v_\la<a\}}v_\la^{-\gamma}\right)\chi_{\mb R^+}(s).
		\end{align*}
		Let $F(x,t) =\ds \I{0}^{t} f(x,s)ds$. Now the energy functional $\mc G_\la: H_0^1(\Om) \ra \mb R$ associated with $(\hat{P}_\la)$ is given as:
		\begin{align*}\label{e5.1}\nonumber
			\mc G_\la(w)=&\frac12 \|w\|^2-\la \I\Om F(x,w)dx - \frac{\la}{2 2_\mu^\ast} \I\Om\I\Om \frac{(w+v_\la)^{2_\mu^\ast}(y)(w+v_\la)^{2_\mu^\ast}(x)}{|x-y|^\mu}dxdy\\
			 &-\la \I\Om\I\Om \frac{v_\la^{2_\mu^\ast}(y)v_\la^{2_\mu^\ast-1}(x)w(x)}{|x-y|^\mu}dxdy.
		\end{align*}
		\begin{Proposition}\label{p5.1}
			The map $\mc G_\la$ is locally Lipschitz.
		\end{Proposition}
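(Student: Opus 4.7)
The plan is to decompose $\mc G_\la$ into four summands and to show each is locally Lipschitz on $H_0^1(\Om)$. Three of them are in fact of class $C^1$, so the difficulty is concentrated on the term involving $F$, which inherits the jump discontinuity of $f$ across the level set $\{w+v_\la=a\}$.

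First I would dispose of the smooth pieces. The quadratic $\frac12\|w\|^2$ is smooth on $H_0^1(\Om)$. The term
\begin{align*}
w\mapsto \la\I\Om\I\Om\frac{v_\la^{2_\mu^\ast}(y)v_\la^{2_\mu^\ast-1}(x)w(x)}{|x-y|^\mu}\,dxdy
\end{align*}
is continuous and linear in $w$ by Proposition \ref{hls} applied with $g=v_\la^{2_\mu^\ast}$ and $h=v_\la^{2_\mu^\ast-1}w$, combined with H\"older's inequality and $v_\la,w\in L^{2^\ast}(\Om)$. The pure Choquard piece
\begin{align*}
w\mapsto \frac{\la}{2\cdot 2_\mu^\ast}\I\Om\I\Om\frac{(w+v_\la)^{2_\mu^\ast}(y)(w+v_\la)^{2_\mu^\ast}(x)}{|x-y|^\mu}\,dxdy
\end{align*}
is $C^1$ on $H_0^1(\Om)$ by the standard Choquard computation (HLS together with $H_0^1\hookrightarrow L^{2^\ast}$). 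These three summands are therefore locally Lipschitz on $H_0^1(\Om)$.

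For the remaining piece $\Psi(w):=\la\I\Om F(x,w)\,dx$, the key observation would be the pointwise bound
\begin{align*}
|f(x,s)|\leq v_\la^{-\gamma}(x)\chi_{\{v_\la<a\}}(x)\quad\text{for all }s\in\mb R,\text{ a.e. }x\in\Om,
\end{align*}
which one checks by a short case analysis: $f(x,s)=0$ if $s\leq 0$ or if $v_\la(x)\geq a$; otherwise $f(x,s)$ equals either $(s+v_\la)^{-\gamma}-v_\la^{-\gamma}\in[-v_\la^{-\gamma},0]$ or $-v_\la^{-\gamma}$, depending on whether $s+v_\la<a$ or not. Writing $F(x,w_1)-F(x,w_2)=\int_{w_2(x)}^{w_1(x)}f(x,s)\,ds$ and inserting this bound gives
\begin{align*}
|F(x,w_1)-F(x,w_2)|\leq v_\la^{-\gamma}(x)\chi_{\{v_\la<a\}}(x)\,|w_1(x)-w_2(x)|\quad\text{a.e. in }\Om.
\end{align*}

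The main obstacle is then the weighted estimate $\I\Om v_\la^{-\gamma}|\psi|\,dx\leq C\|\psi\|$ for every $\psi\in H_0^1(\Om)$, for which the restriction $0<\gamma<3$ is essential. Using $v_\la\geq M\phi_\gamma$ from Proposition \ref{p3.1}, the weight $v_\la^{-\gamma}$ is controlled near $\partial\Om$ by $\de^{-\alpha}$ with $\alpha=\gamma$ when $\gamma\leq 1$ and $\alpha=2\gamma/(\gamma+1)$ when $\gamma>1$; in both cases $\gamma<3$ forces $\alpha<3/2$. Cauchy--Schwarz combined with Hardy's inequality then gives
\begin{align*}
\I\Om\frac{|\psi|}{\de^\alpha}\,dx\leq \left(\I\Om \de^{2-2\alpha}\,dx\right)^{1/2}\left(\I\Om \frac{|\psi|^2}{\de^2}\,dx\right)^{1/2}\leq C\|\psi\|,
\end{align*}
since $2-2\alpha>-1$ ensures integrability of $\de^{2-2\alpha}$ over the bounded domain $\Om$. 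Feeding this estimate into the previous pointwise bound yields the global Lipschitz inequality $|\Psi(w_1)-\Psi(w_2)|\leq \la C\|w_1-w_2\|$, and summing the four contributions completes the proof that $\mc G_\la$ is locally Lipschitz on $H_0^1(\Om)$.
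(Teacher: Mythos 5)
Your argument is correct. The paper itself gives no proof here --- it simply defers to \cite[Proposition 3.1]{DPST1} --- so your write-up is a self-contained replacement, and it follows essentially the strategy of that reference: the quadratic, linear and Choquard terms are handled by continuity/HLS (the Choquard term being $C^1$ on $H_0^1(\Om)$, hence locally Lipschitz), and the whole difficulty sits in the primitive of the discontinuous singular term. Your key pointwise bound $|f(x,s)|\leq \chi_{\{v_\la<a\}}v_\la^{-\gamma}$ is verified correctly by the case analysis (the only nonzero cases give $f\in[-v_\la^{-\gamma},0]$), and the resulting estimate $\I\Om v_\la^{-\gamma}|\psi|\,dx\leq C\|\psi\|$ is exactly where the standing hypothesis $0<\gamma<3$ of Section \ref{s5} and the boundary behaviour $v_\la\sim\phi_\gamma$ from Lemma \ref{l4.2} enter; your exponent bookkeeping ($\alpha=2\gamma/(\gamma+1)<3/2$ for $1<\gamma<3$, $\alpha\leq 1$ otherwise) and the Cauchy--Schwarz/Hardy combination are right. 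In fact you prove slightly more than required: the $F$-term comes out \emph{globally} Lipschitz, with only the two nonlocal terms responsible for the ``locally''. The one point worth making explicit if you write this up is that the new feature relative to \cite{DPST1} is the Hartree term, for which you correctly invoke Proposition \ref{hls} together with $v_\la\in L^\infty(\Om)$ to get boundedness of the linear functional and the standard $C^1$ property of $u\mapsto\|u\|_{HL}^{22_\mu^\ast}$ on $H_0^1(\Om)$.
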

		\begin{proof}
			The proof is similar to the proof of \cite[Proposition 3.1]{DPST1} and hence omitted. \QED 
		\end{proof}
		\begin{Remark}
			Note that $J_\la (w^+ +v_\la) = J_\la(v_\la) +\mc G_\la(w) -\frac12 \|w^-\|^2$ for any $ w\in H_0^1(\Om)$. Therefore, since $v_\la$ is a local minimum of $J_\la$, it follows that $0$ is a local minimum of $\mc G_\la$ in $H_0^1(\Om)$-topology.
		\end{Remark}
		\begin{Definition}
			Let $ \Phi: H_0^1(\Om)\ra \mb  R$ be a locally Lipschitz map. The generalized derivative of $\Phi$ at $u$ in the direction of $v$ (denoted by $\Phi^0(u,v)$) is defined as:
			\begin{equation*}
				\Phi^0(u,v)= \limsup\limits_{h \ra 0, t \downarrow 0} \frac{\Phi(u +h +tv)-\Phi(u+h)}{t},~u,~v \in H_0^1(\Om).
			\end{equation*}
			We say that $u$ is `generalized' critical point of $ \Phi$ if $\Phi^0(u,v) \geq 0$ for all $v \in H_0^1(\Om)$.
		\end{Definition}
		\begin{Remark}\label{r5.4}
			From \cite[Definition 4.1]{DPST}, for $w \geq 0$ and $\psi \in H_0^1(\Om)$, we have the following inequality:
			\begin{align}\nonumber\label{e5.2}
				\mc G_\la^0(w,\psi)=& \I\Om \na (v_\la +w)\na\psi dx-\la\I\Om\I\Om\frac{(v_\la +w)^{2_\mu^*}(y)(v_\la +w)^{2_\mu^*-1}(x)}{|x-y|^\mu}\psi dxdy\\
				 &-\la \I\Om z^{\psi}(v_\la +w)^{-\gamma} \psi dx,
			\end{align}
			for some measurable function $ z^\psi \in [\chi_{\{v_\la+w<a\}},\chi_{\{v_\la+w\leq a\}}]$.
		\end{Remark}
	\begin{Remark}\label{r5.5}
		Suppose for some nontrivial nonnegative $w_\la \in H_0^1(\Om)$ we have $\mc G_\la(w_\la,\psi) \geq 0$ for all $\psi \in H_0^1(\Om)$, i.e., $w_\la$ is a generalized critical point of $\mc G_\la$. Then we claim that 
		\begin{align}\nonumber \label{e5.3}
			\la \I\Om& \frac{(v_\la+w_\la)^{2_\mu^\ast}(y)(v_\la+w_\la)^{2_\mu^\ast-1}(x)}{|x-y|^\mu}dy \leq -\De (v_\la+w_\la)\\
			 &\leq\la\left(\I\Om \frac{(v_\la+w_\la)^{2_\mu^\ast}(y)(v_\la+w_\la)^{2_\mu^\ast-1}(x)}{|x-y|^\mu}dy+(v_\la+w_\la)^{-\gamma}\right).
		\end{align}
	Indeed, since $w_\la \geq 0$ and $\mc G_\la^0(w_\la,\psi) \geq0$, using \eqref{e5.2}, we have for any $ \psi \in H_0^1(\Om)$
	\begin{align}\nonumber\label{e5.4}
		0 \leq \mc G_\la^0(w_\la,\psi) =&\I\Om \na (v_\la +w_\la)\na\psi dx-\la\I\Om\I\Om\frac{(v_\la +w_\la)^{2_\mu^*}(y)(v_\la +w_\la)^{2_\mu^*-1}(x)}{|x-y|^\mu}\psi dxdy\\
		&-\la \I\Om z^{\psi}(v_\la +w_\la)^{-\gamma} \psi dx.
	\end{align}
Taking $\psi \geq 0$ in \eqref{e5.4}, we have
	\begin{align*}
	\la\I\Om\I\Om\frac{(v_\la +w)^{2_\mu^*}(y)(v_\la +w)^{2_\mu^*-1}(x)}{|x-y|^\mu}\psi dxdy
		+\la \I\Om z^{\psi}(v_\la +w)^{-\gamma} \psi dx \leq  	\I\Om \na (v_\la +w)\na\psi dx.
	\end{align*}
Since $z^\psi \geq0$ and given that $\psi \geq0$, we have
\begin{align}\label{e5.5}
		\la\I\Om\I\Om\frac{(v_\la +w)^{2_\mu^*}(y)(v_\la +w)^{2_\mu^*-1}(x)}{|x-y|^\mu}\psi dxdy\leq  	\I\Om \na (v_\la +w)\na\psi dx.
\end{align}
Next let us consider $ \varphi\in H_0^1(\Om)$ which is nonpositive, so that $\psi =-\varphi \geq 0$. Again using \eqref{e5.4}, we have
	\begin{align*}
	\la\I\Om\I\Om\frac{(v_\la +w)^{2_\mu^*}(y)(v_\la +w)^{2_\mu^*-1}(x)}{|x-y|^\mu}\varphi dxdy
	+\la \I\Om z^{\varphi}(v_\la +w)^{-\gamma} \varphi dx \leq  	\I\Om \na (v_\la +w)\na\varphi dx.
\end{align*}
Multiplying by $-1$ on both sides and using the fact that $z^\varphi \in [0,1]$, we get
\begin{align*}
	\la\I\Om\I\Om\frac{(v_\la +w)^{2_\mu^*}(y)(v_\la +w)^{2_\mu^*-1}(x)\psi(x)}{|x-y|^\mu} dxdy
	+\la \I\Om(v_\la +w)^{-\gamma} \psi dx \geq  	\I\Om \na (v_\la +w)\na\psi dx.
\end{align*}
Since $\psi =-\varphi$ is any arbitrary nonnegative function in $H_0^1(\Om)$, the previous expression implies that
\begin{align}\label{e5.6}
	-\De (v_\la+w_\la)
	\leq\la\left(\I\Om \frac{(v_\la+w_\la)^{2_\mu^\ast}(y)(v_\la+w_\la)^{2_\mu^\ast-1}(x)}{|x-y|^\mu}dy+(v_\la+w_\la)^{-\gamma}\right)~\text{weakly}.
\end{align}
Combining \eqref{e5.5} and \eqref{e5.6}, we have the validity of \eqref{e5.3}. Hence the claim.
	\end{Remark}
Note that $-\De(v_\la+w_\la)$ is a positive distribution and hence it is given by a positive, regular
Radon measure say $\nu$. Then using \eqref{e5.3}, we can show that $\nu$ is absolutely continuous with
respect to the Lebesgue measure. Now by Radon Nikodyn theorem there exists a locally integrable function $g$ such that $-\De(v_\la+w_\la) = g$ and hence $g \in L^p_{\text{loc}}(\Om)$ for some $p > 1$. Now using Lemma $B.3$ of \cite{MiS} and elliptic regularity, we can conclude that $v_\la +w_\la \in W^{2,q}_{\text{loc}} (\Om)$
for all $q < \infty$ and for almost every $x \in \Om$, using \eqref{e5.3} we have $-\De(v_\la+w_\la) > 0$. In particular,
\begin{align}\label{e5.7}
-\De(v_\la+w_\la) > 0~\text{for a.e on}~ \{x \in \Om: (v_\la+ w_\la)(x) = a\}. 
\end{align}
On the other hand, we have $-\De(v_\la + w_\la) = 0$ a.e on the set $\{x\in \Om : (v_\la + w_\la)(x) = a\}.$ This contradicts \eqref{e5.7} unless the Lebesgue measure of the set $\{x\in \Om : (v_\la + w_\la)(x) = a\}$ is zero. Therefore $z^\psi =\chi_{\{v_\la +w_\la<a\}}$ a.e. in $\Om$ for any $\psi \in H_0^1(\Om)$ and hence $v_\la+w_\la$ is a second solution for $(\mc P_\la)$.\\
		Our next target is to show the existence of a generalized critical point for $\mc G_\la$ which gives us the second solution of $(\mc P_{\la})$. We will employ the Mountain Pass theorem and Ekeland variational principle to this end. We define $X^+ =\{u \in H_0^1(\Om):~u \geq 0~\text{a.e in}~\Om\}$. Since $0$ is a local minimum of $\mc G_\la$, there exists a $\kappa_0 >0$ such that $\mc G_{\la}(0) \leq \mc G_\la (u)$ for $\|u\| \leq \kappa_0$. Then the following two cases arise:
		\begin{enumerate}
			\item $ZA$ (Zero Altitude): $ \inf \{\mc G_\la (w) :\|w\| =\kappa,~w \in X^+\} =\mc G_\la(0)=0$ for all $\kappa \in (0,\kappa_0)$.
			\item $MP$ (Mountain Pass): There exists $\kappa_1 \in (0,\kappa_0)$ such that $ \inf\{\mc G_\la(w)=\kappa_1,~w \in X^+\} > \mc G_\la(0)$.
		\end{enumerate}
		\begin{Lemma}\label{l5.6}
			Let $ZA$ hold for some $\la \in (0,\La^a)$. Then there exists a nontrivial `generalized' critical  point $w_\la \in X^+$ for $\mc G_\la$.
		\end{Lemma}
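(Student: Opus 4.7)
The plan is to use Ekeland's variational principle on the sphere together with a nonlocal concentration-compactness analysis. The hypothesis $ZA$ says that on every small sphere $S_\kappa := \{w\in X^+:\|w\|=\kappa\}$, $0<\kappa<\kappa_0$, the infimum of $\mc G_\la$ equals $\mc G_\la(0)=0$. This provides on each such sphere a minimizing sequence with values tending to $0$. If we can pass to a strong limit on the sphere, the limit automatically satisfies $\|w_\la\|=\kappa>0$, so nontriviality comes for free, and the generalized critical point property follows from the Ekeland estimate together with vanishing of the Lagrange multiplier.

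First I would fix $\kappa\in(0,\kappa_0)$ and use $ZA$ to choose $\{w_n\}\subset X^+\cap S_\kappa$ with $\mc G_\la(w_n)\to 0$. Since $X^+\cap S_\kappa$ is a complete metric subspace of $H_0^1(\Om)$ and $\mc G_\la$ is locally Lipschitz by Proposition \ref{p5.1} (hence lower semicontinuous), Ekeland's variational principle produces $\tilde w_n\in X^+\cap S_\kappa$ with $\mc G_\la(\tilde w_n)\le \mc G_\la(w_n)$, $\|\tilde w_n-w_n\|\to 0$, and
\[
\mc G_\la(\tilde w_n)\le \mc G_\la(v)+\tfrac{1}{n}\|v-\tilde w_n\|\qquad\forall\,v\in X^+\cap S_\kappa.
\]
Converting this minimality under two constraints (the positivity cone $X^+$ and the sphere $S_\kappa$) into a Clarke-subgradient estimate via admissible variations $v_t=P(\tilde w_n+t\psi)$ and using that $\mc G_\la^0$ is upper-semicontinuous in both arguments (as in \cite{DPST1}), I would obtain a Lagrange multiplier $\mu_n\in\mb R$ such that
\[
\mc G_\la^0(\tilde w_n,\psi)\ge \mu_n\ld\tilde w_n,\psi\rd-\tfrac{C}{n}\|\psi\|\qquad\forall\,\psi\in H_0^1(\Om)\text{ admissible for }X^+.
\]

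Next I would show $\{\mu_n\}$ is bounded, by testing with $\psi=\tilde w_n$ (a feasible cone direction) and using the expression \eqref{e5.2} for $\mc G_\la^0$ together with $\mc G_\la(\tilde w_n)\to 0$ and $\|\tilde w_n\|=\kappa$; boundedness of the Choquard term via Hardy--Littlewood--Sobolev and of the singular term via the lower bound $v_\la\succ\phi_\gamma$ (Remark \ref{r2.4}) yields $|\mu_n|\le M$. Up to a subsequence, $\tilde w_n\rightharpoonup \bar w$ in $H_0^1(\Om)$, $\bar w\in X^+$, and $\mu_n\to\mu$.

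The main obstacle is proving strong convergence $\tilde w_n\to\bar w$ in $H_0^1(\Om)$, which then gives $\|\bar w\|=\kappa>0$ and lets us pass to the limit in the subgradient inequality. For this I would apply the nonlocal concentration--compactness principle to the critical Choquard term: assuming $\tilde w_n\not\to \bar w$ strongly, one detects concentration of $|\tilde w_n|^{2_\mu^\ast}$ at a finite set of points with masses bounded below by the Lieb--Sobolev threshold $S_{H,L}^{\frac{2n-\mu}{n-\mu+2}}$ (up to the multiplicative constant from Lemma \ref{l2.3}). Combining this lower bound with the PS-type identity coming from the Ekeland estimate and $\mc G_\la(\tilde w_n)\to 0$, and choosing $\kappa_0$ sufficiently small so that $\tfrac{1}{2}\kappa^2$ lies strictly below the bubble energy $\tfrac{n-\mu+2}{2(2n-\mu)}S_{H,L}^{\frac{2n-\mu}{n-\mu+2}}$, I would reach a contradiction. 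Strong convergence forces $\|\bar w\|=\kappa$, so $\bar w\not\equiv0$; passing to the limit in the subgradient estimate, and using that an interior point of $X^+$ relative to the weak$^*$ behaviour forces $\mu=0$ (testing with $\pm\psi$ for $\psi$ compactly supported where $\bar w>0$), yields $\mc G_\la^0(\bar w,\psi)\ge 0$ for all $\psi\in H_0^1(\Om)$, completing the proof with $w_\la:=\bar w$.
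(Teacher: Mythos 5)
Your overall strategy (Ekeland near the zero-altitude spheres, weak limit, then strong convergence to secure nontriviality) is the right skeleton, but as written the proof has concrete gaps. First, by running Ekeland on the sphere $S_\kappa=\{w\in X^+:\|w\|=\kappa\}$ you create a Lagrange multiplier $\mu_n$ that you then have to kill, and your argument for $\mu=0$ (testing with $\pm\psi$ supported where $\bar w>0$) is not available: on a sphere only tangential variations are admissible, and a sphere-constrained minimizer generically carries a nonzero multiplier. The paper avoids this entirely by applying Ekeland on the thick set $\mc R=\{w\in X^+:\kappa-q\le\|w\|\le\kappa+q\}$, on which the infimum is still $0$ because $0$ is a local minimum of $\mc G_\la$ on the whole ball $\|w\|\le\kappa_0$; then $\ell_k+\e(\aleph-\ell_k)\in\mc R$ for every $\aleph\in X^+$ and small $\e$, and one obtains the multiplier-free inequality $\mc G_\la^0(\ell_k,\aleph-\ell_k)\ge-\frac1k\|\aleph-\ell_k\|$ directly. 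If you insist on the sphere you must at least record that the sphere infimum coincides with the infimum over a full neighbourhood --- which is exactly the annulus trick.

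Second, ``passing to the limit in the subgradient estimate'' is where most of the work of this lemma lies, and your proposal does not engage with it: one must extract the weak$^\ast$ limit $\tilde z$ of the measurable selections $z_k^{\cdot}\in[\chi_{\{v_\la+\ell_k<a\}},\chi_{\{v_\la+\ell_k\le a\}}]$, control the singular term $(v_\la+\ell_k)^{-\gamma}$ via the barrier $v_\la\succ\phi_\gamma$, Hardy's inequality and Vitali's theorem, and then show (Remark \ref{r5.5}) that $|\{v_\la+w_\la=a\}|=0$ so that $\tilde z=\chi_{\{v_\la+w_\la<a\}}$; none of this is routine. Third, your compactness step is off target: $\kappa_0$ is not at your disposal (it is dictated by the local minimality of $0$), only $\kappa\in(0,\kappa_0)$ is, and $\tfrac12\kappa^2$ is not the energy level of the sequence (the level is $0$). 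A concentration--compactness argument could probably be salvaged by taking $\kappa$ small so that $\kappa^2$ sits below the minimal bubble mass, but the paper's route is cleaner and threshold-free: it first reduces to the case $\mc G_\la(w_\la)=0$ (otherwise $w_\la\ne0$ trivially, since $\mc G_\la(0)=0$), then tests the Ekeland inequality with $\aleph=w_\la$ and $\aleph=2\ell_k$ and combines the resulting relations with the energy bound and the Brezis--Lieb lemma to get $\left(\frac12-\frac{1}{2\cdot 2_\mu^\ast}\right)\|\ell_k-w_\la\|^2\le o_k(1)$, whence $\ell_k\to w_\la$ strongly and $\|w_\la\|=\kappa>0$.
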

		\begin{proof}
			Fix $\kappa \in (0,\kappa_0)$. Then there exists a sequence $\{v_k\}_{k \in \mb N} \subset X^+$ with $\|v_k\| =\kappa$ and $\mc G_\la (v_k) \leq \frac{1}{k}$. Fix $0<q<\frac12\min\{\kappa_0-\kappa,\kappa\}$ and define $ \mc R = \{w \in X^+ : \kappa -q \leq \|w\| \leq \kappa +q\}$. Note that $\mc R$ is closed and $\mc G_\la$ is Lipschitz continuous on $\mc R$ (in view of Proposition \ref{p5.1}). Thus by Ekeland's variational Principle, there exists $\{\ell_k\}_{k\in \mb N}\subset \mc R$ such that the following holds:
			\begin{enumerate}
				\item $\mc G_\la (\ell_k) \leq \mc G_\la(v_k) \leq \frac{1}{k},$
				\item $\|\ell_k -v_k\| \leq \frac{1}{k}$ and
				\item $\mc G_\la(\ell_k) \leq \mc G_\la(l) +\frac{1}{k}\|l-\ell_k \|$ for all $l \in \mc R$.
			\end{enumerate}
			We note that
			\begin{align}\label{e5.8}
				\kappa-\frac{1}{k} =\|v_k\| -\frac{1}{k} \leq \|\ell_k\| \leq \|v_k\|+\frac{1}{k} =\kappa +\frac{1}{k}.  
			\end{align}
			Therefore, for $\aleph \in X^+$ we can choose $\e >0$ sufficiently small such that $ \ell_k +\e(\aleph -\ell_k) \in \mc R$ for all large $k$. Then by item $3$ above, we get
			\begin{align*}
				\frac{\mc G_\la(\ell_k +\e(\aleph -\ell_k))-\mc G_\la(\ell_k)}{\e} \geq \frac{-1}{k} \|\aleph -\ell_k\|.
			\end{align*}
			Letting $\e \ra 0^+$, we conclude
			\begin{align*}
				\mc G_\la^0(\ell_k,\aleph -\ell_k) \geq -\frac{1}{k}\|\aleph-\ell_k\|~\text{for all}~\aleph \in X^+.
			\end{align*}
			From Remark \ref{r5.4}, for any $\aleph \in X^+$, there exists $z_k^{\aleph -\ell_k} \in [\chi_{\{v_\la+\ell_k<a\}},\chi_{\{v_\la+\ell_k\leq a\}}]$ such that
			\begin{align}\nonumber
				\I\Om \na (v_\la+\ell_k)\na(\aleph-\ell_k)dx &-\la \I\Om\I\Om \frac{(v_\la +\ell_k)^{2_\mu^\ast}(y)(v_\la +\ell_k)^{2_\mu^\ast-1}(x)(\aleph-\ell_k)(x)}{|x-y|^\mu}dxdy\\ \nonumber
			\end{align}
		\begin{align}\label{e5.9}
				&-\la \I\Om z_k^{\aleph-\ell_k}(v_\la +\ell_k)^{-\gamma}(\aleph-\ell_k)dx \geq -\frac{1}{k} \|\aleph -\ell_k\|.
			\end{align}
			Since $\{\ell_k\}_{k \in \mb N}$ is bounded in $H_0^1(\Om)$, we may assume $\ell_k \rightharpoonup w_\la \in X^+$ weakly in $H_0^1(\Om)$ as well as a.e. in $\Om$. In the following we show that $w_\la$ is a solution of $(\hat{P}_\la)$. For $\varphi \in C_c^\infty(\Om)$, set 
			\begin{align*}
				\varphi_{k,\e} =(\ell_k+\e \varphi)^-~\text{and}~\ell=\ell_k+\e \varphi +\varphi_{k,\e}=(l_k+\e \varphi)^+.
			\end{align*} 
		Thus $\ell \in X^+$ and with this choice of $\ell$, \eqref{e5.9} becomes
			\begin{align}\nonumber\label{eq5.10}
			\I\Om \na (v_\la+\ell_k)\na(\e\varphi+\varphi_{k,\e})dx &-\la \I\Om\I\Om \frac{(v_\la +\ell_k)^{2_\mu^\ast}(y)(v_\la +\ell_k)^{2_\mu^\ast-1}(x)(\e\varphi+\varphi_{k,\e})(x)}{|x-y|^\mu}dxdy\\
			&-\la \I\Om z_k^{\e\varphi+\varphi_{\e,k}}(v_\la +\ell_k)^{-\gamma}(\e\varphi+\varphi_{k,\e})dx \geq -\frac{1}{k} \|\e\varphi+\varphi_{k,\e}\|,
		\end{align}
		where now in view of the fact $\{x\in\Om:(\e\varphi+\varphi_{k,\e})(x) \leq 0\} =\{x \in\Om : \varphi(x) \leq 0\}$, we have	
		\begin{equation}\label{e5.11}
			z_k^{\e\varphi+\varphi_{k,\e}}=\chi_{\{v_\la+\ell_k<a\}}+\chi_{\{v_\la+\ell_k=a\}\cap\{\varphi\leq0\}}.
		\end{equation}
	For a fixed $\e$, we now let $k \ra \infty$ and show that we can pass to the required limits in each of the terms in \eqref{eq5.10}. We have $\varphi_{k,\e} \rightharpoonup (w_\la +\e\varphi)^-$ in $H_0^1(\Om)$. It can be shown as in \cite[Lemma 7.5.2]{GR} that
	\begin{align}\label{e5.12}
		\I\Om \na (v_\la+\ell_k)\na(\e\varphi+\varphi_{k,\e})dx\leq 	\I\Om \na (v_\la+w_\la)\na(\e\varphi+(w_\la+\e\varphi)^{-})dx +o_k(1).
	\end{align}
Clearly $z_k^{\e\varphi+\varphi_{k,\e}}$ is bounded in $\Om$ and hence $z_k^{\e\varphi+\varphi_{k,\e}}\ra \tilde{z}$ weak* in $L^\infty(\Om)$. Since
\begin{align*}
	(\e\varphi+ \varphi_{k,\e})(v_\la+\ell_k)^{-\gamma} \ra (\e\varphi+(w_\la+\e\varphi)^-)(v_\la+w_\la)^{-\gamma}~\text{in}~L^1(\Om),
\end{align*}
we conclude that
\begin{align}\label{e5.13}
	\I\Om z_k^{\e\varphi+\varphi_{k,\e}}(\e\varphi+ \varphi_{k,\e})(v_\la+\ell_k)^{-\gamma} \ra \I\Om \tilde{z}(\e\varphi+(w_\la+\e\varphi)^-)(v_\la+w_\la)^{-\gamma}.
	\end{align}
Now it is standard that
\begin{align}\label{e5.14}\nonumber
	\I\Om\I\Om& \frac{(v_\la +\ell_k)^{2_\mu^\ast}(y)(v_\la +\ell_k)^{2_\mu^\ast-1}(x)\varphi(x)}{|x-y|^\mu}dxdy \\
	&\ra \I\Om\I\Om \frac{(v_\la +w_\la)^{2_\mu^\ast}(y)(v_\la +w_\la)^{2_\mu^\ast-1}(x)\varphi(x)}{|x-y|^\mu}dxdy.
\end{align}
Again since $0 \leq \varphi_{k,\e} \leq \e |\varphi|$, we see that
\begin{align}\label{e5.15}\nonumber
	\I\Om\I\Om& \frac{(v_\la +\ell_k)^{2_\mu^\ast}(y)(v_\la +\ell_k)^{2_\mu^\ast-1}(x)\varphi_{k,\e}}{|x-y|^\mu}dxdy\\
	 &\ra \I\Om\I\Om \frac{(v_\la +w_\la)^{2_\mu^\ast}(y)(v_\la +w_\la)^{2_\mu^\ast-1}(x)(w_\la+\e\varphi)^-}{|x-y|^\mu}dxdy.
\end{align}
Combining \eqref{eq5.10}-\eqref{e5.15}, we conclude that
	\begin{align}\nonumber\label{e5.16}
	0\leq& \I\Om \na (v_\la+w_\la)\na(\e\varphi+(w_\la+\e\varphi)^-)dx-\la \I\Om \tilde{z}(v_\la +w_\la)^{-\gamma}(\e\varphi+(w_\la+\e\varphi)^-)dx \\
	 &-\la \I\Om\I\Om \frac{(v_\la +w_\la)^{2_\mu^\ast}(y)(v_\la +w_\la)^{2_\mu^\ast-1}(x)(\e\varphi+(w_\la+\e\varphi)^-)(x)}{|x-y|^\mu}dxdy.
\end{align}
Note that $0\leq\tilde{z}\leq1$ a.e. in $\Om$ and $\tilde{z}$ depends only upon $\varphi$. Rewriting the above equation using the fact that $v_\la$ solves $(\mc P_\la)$ in the weak sense, we obtain
\begin{align}\nonumber\label{e5.17}
	&\I\Om \na(v_\la +w_\la)\na\varphi dx -\la \I\Om\I\Om \frac{(v_\la +w_\la)^{2_\mu^\ast}(y)(v_\la +w_\la)^{2_\mu^\ast-1}(x)\varphi(x)}{|x-y|^\mu}dxdy
	-\la \I\Om \tilde{z}(v_\la +w_\la)^{-\gamma}\varphi dx\\ \nonumber
	\geq& -\frac{1}{\e}\I\Om \na w_\la \na(w_\la+\e\varphi)^-dx+\frac{\la}{\e} \I\Om(\tilde{z}(v_\la+w_\la)^{-\gamma}-\chi_{\{v_\la<a\}}v_\la^{-\gamma})(w_\la+\e\varphi)^-dx\\
	&+ \frac{\la}{\e}\I\Om\I\Om \frac{((v_\la+w_\la)^{2_\mu^\ast}(y)(v_\la+w_\la)^{2_\mu^\ast-1}(x)-v_\la^{2_\mu^\ast}(y)v_\la^{2_\mu^\ast-1}(x))(v_\la+\e\varphi)^-}{|x-y|^\mu}dxdy.
\end{align}
Let $\Om_\e =\{x\in\Om:w_\la+\e\varphi\leq0\}$. Note that $|\Om_\e|\ra0$ as $\e \ra 0$ and hence
\begin{align*}
	-\I\Om \na w_\la \na(w_\la+\e\varphi)^-dx= \I{\Om_\e}|\na w_\la|^2 +\e \I{\Om_\e}\na w_\la\na\varphi dx \geq o(\e).
\end{align*}
Note that the last term in the RHS of \eqref{e5.17} is nonnegative. Using the fact that $v_\la^{-\gamma}\varphi \in L^1(\Om)$, we see that
\begin{align*}
	\I\Om(\tilde{z}(v_\la+w_\la)^{-\gamma}-\chi_{\{v_\la<a\}}v_\la^{-\gamma})(w_\la+\e\varphi)^-dx \leq \I{\Om_\e}2 v_\la^{-\gamma}(w_\la+\e\varphi)^- \leq 2\e \I{\Om_\e}v_\la^{-\gamma}|\varphi| =o(\e).
\end{align*}
Letting $\e \ra0$ in \eqref{e5.17}, it can be seen that given any $\varphi \in H_0^1(\Om)$, there exists $\tilde{z}=\tilde{z}^\varphi$ with $0\leq \tilde{z}\leq1$ such that
\begin{align*}
0\leq&	\I\Om \na(v_\la +w_\la)\na\varphi dx -\la \I\Om\I\Om \frac{(v_\la +w_\la)^{2_\mu^\ast}(y)(v_\la +w_\la)^{2_\mu^\ast-1}(x)\varphi(x)}{|x-y|^\mu}dxdy\\
	&-\la \I\Om \tilde{z}(v_\la +w_\la)^{-\gamma}\varphi dx.
\end{align*}
Moreover since $\tilde{z}$ is obtained as the weak* limit of $z_k^{\e\varphi+\varphi_{k,\e}}$, we get that $\tilde{z} =1$ a.e. in $\{v_\la+w_\la <a\}$ and $\tilde{z} =0$ a.e. in $\{v_\la+w_\la >a\}$. Therefore from Remark \ref{r5.5}, $w_\la$ is a generalized critical point of $\mc G_\la$. It remains to show that $w_\la \not \equiv 0$. Note that if $\mc G_\la(w_\la) \ne 0$ we are done. So assume $\mc G_\la(w_\la)=0$. From \eqref{e5.8}, we see that $\|\ell_k\|\geq \ds\frac{\kappa}{2}$ for all large $k$. Thus it is sufficient to show that $\ell_k \ra w_\la$ strongly in $H_0^1(\Om)$. Taking $\aleph =w_\la$ in \eqref{e5.9}, we get
\begin{align}\nonumber\label{e5.18}
	\I\Om \na (v_\la+w_\la)&\na(w_\la-\ell_k)dx -\la \I\Om\I\Om \frac{(v_\la +\ell_k)^{2_\mu^\ast}(y)(v_\la +\ell_k)^{2_\mu^\ast-1}(x)(w_\la-\ell_k)(x)}{|x-y|^\mu}dxdy\\
	&-\la \I\Om z_k^{w_\la-\ell_k}(v_\la +\ell_k)^{-\gamma}(w_\la-\ell_k)dx+\frac{1}{k} \|w_\la -\ell_k\| \geq \|w_\la-\ell_k\|^2.
\end{align}
Now for any measurable set $E \subset \Om$, as $v_\la \geq M_1\phi_\gamma$ and $\ell_k \in X^+$, thanks to Hardy's inequality, we have
\begin{align}\label{5.19}\nonumber
	\I{E}z_k^{w_\la-\ell_k}|\ell_k -w_\la|(v_\la+\ell_k)^{-\gamma} \leq M\I{E}\frac{|\ell_k-w_\la|}{v_\la^\gamma}\leq& M \I{E}\frac{|\ell_k-w_\la|}{\de(x)^\frac{2\gamma}{1+\gamma}}\leq M \I{E}\frac{|\ell_k-w_\la|}{\de(x)}\de(x)^\frac{1-\gamma}{1+\gamma}\\
	\leq& M \|\ell_k-w_\la\| \|\de(x)^\frac{1-\gamma}{1+\gamma}\|_{L^2(E)}.
\end{align}
Since $\ell_k \ra w_\la$ pointwise a.e. in $\Om$, by Vitali's convergence theorem,
\begin{align}\label{e5.19}
	\I{E}z_k^{w_\la-\ell_k}|\ell_k -w_\la|(v_\la+\ell_k)^{-\gamma}\ra 0~\text{as}~k\ra \infty.
\end{align}
Also using Brezis-Lieb Lemma, we have
\begin{align}\nonumber\label{e5.20}
	\I\Om\I\Om& \frac{(v_\la +\ell_k)^{2_\mu^\ast}(v_\la +\ell_k)^{2^\ast-1}(w_\la-\ell_k)}{|x-y|^\mu}dxdy\\ \nonumber
	 =&	\I\Om\I\Om \frac{(v_\la +\ell_k)^{2^\ast}(v_\la +\ell_k)^{2^\ast-1}(w_\la+v_\la)}{|x-y|^\mu}dxdy
	 -\I\Om\I\Om \frac{(v_\la +\ell_k)^{2^\ast}(v_\la +\ell_k)^{2^\ast}}{|x-y|^\mu}dxdy\\
	 =&-\I\Om\I\Om \frac{(w_\la-\ell_k)^{2_\mu^\ast}(w_\la-\ell_k)^{2_\mu^\ast}}{|x-y|^\mu}+o_k(1).
\end{align}
Now using \eqref{e5.19} and \eqref{e5.20} in \eqref{e5.18}, we get
\begin{align}\label{e5.21}
	\|w_\la -\ell_k\|^2 -\la \I\Om\I\Om \frac{(w_\la-\ell_k)^{2_\mu^\ast}(w_\la-\ell_k)^{2_\mu^\ast}}{|x-y|^\mu}dxdy \leq o_k(1). 
\end{align}
Again taking $\aleph =2\ell_k$ in \eqref{e5.9} and using the fact that $v_\la$ solves $(\mc P_\la)$, we obtain
\begin{align}\nonumber\label{eq5.22}
	-\frac{1}{k}\|\ell_k\| \leq& \I\Om \na v_\la \na \ell_k dx+\I\Om|\na \ell_k|^2dx-\la\I\Om\I\Om \frac{(v_\la+\ell_k)^{2_\mu^\ast}(v_\la+\ell_k)^{2_\mu^\ast-1}\ell_k}{|x-y|^\mu}dxdy\\ \nonumber
	&-\la \I\Om z_k^{\ell_k}l_k(v_\la+\ell_k)^{-\gamma}dx\\ \nonumber
	=& \|\ell_k\|^2-\la\I\Om\I\Om \frac{((v_\la+\ell_k)^{2_\mu^\ast}(v_\la+\ell_k)^{2_\mu^\ast-1}-v_\la^{2_\mu^\ast}v_\la^{2_\mu^\ast-1})\ell_k}{|x-y|^\mu}dxdy\\ \nonumber
	&+\la\I\Om (\chi_{\{v_\la<a\}}v_\la^{-\gamma}-z_k^{\ell_k}(v_\la+\ell_k)^{-\gamma})\ell_kdx\\ \nonumber
	=& \|w_\la\|^2+\|\ell_k-w_\la\|^2-\la\I\Om\I\Om \frac{((v_\la+\ell_k)^{2_\mu^\ast}(v_\la+\ell_k)^{2_\mu^\ast-1}-v_\la^{2_\mu^\ast}v_\la^{2_\mu^\ast-1})\ell_k}{|x-y|^\mu}dxdy\\
	&+\la\I\Om (\chi_{\{v_\la<a\}}v_\la^{-\gamma}-z_k^{\ell_k}(v_\la+\ell_k)^{-\gamma})\ell_kdx+o_k(1).
\end{align}
Now as $w_\la$ solves $(\hat{P}_\la)$, we have
\begin{align*}
	\|w_\la\|^2 =\la\I\Om\I\Om& \frac{((v_\la+w_\la)^{2_\mu^\ast}(v_\la+w_\la)^{2_\mu^\ast-1}-v_\la^{2_\mu^\ast}v_\la^{2_\mu^\ast-1})w_\la}{|x-y|^\mu}dxdy\\
\end{align*}
\begin{align*}
		&+\la\I\Om (\chi_{\{v_\la+w_\la<a\}}(v_\la+w_\la)^{-\gamma}-\chi_{\{v_\la<a\}}v_\la^{-\gamma})w_\la dx.
\end{align*}
Using this identity in \eqref{eq5.22}, we get 
\begin{align}\nonumber\label{e5.24}
	-\frac{1}{k}&\|\ell_k\|
	\leq\|\ell_k-w_\la\|^2-\la\I\Om\I\Om \frac{((v_\la+\ell_k)^{2_\mu^\ast}(v_\la+\ell_k)^{2_\mu^\ast-1}-v_\la^{2_\mu^\ast}v_\la^{2_\mu^\ast-1})\ell_k}{|x-y|^\mu}dxdy\\ \nonumber
	&+\la\I\Om (\chi_{\{v_\la+w_\la<a\}}(v_\la+w_\la)^{-\gamma}-\chi_{\{v_\la<a\}}v_\la^{-\gamma})w_\la dx+\la\I\Om (\chi_{\{v_\la<a\}}v_\la^{-\gamma}-z_k^{\ell_k}(v_\la+\ell_k)^{-\gamma})\ell_kdx\\ 
	&+\la\I\Om\I\Om \frac{((v_\la+w_\la)^{2_\mu^\ast}(v_\la+w_\la)^{2_\mu^\ast-1}-v_\la^{2_\mu^\ast}v_\la^{2_\mu^\ast-1})w_\la}{|x-y|^\mu}dxdy
	+o_k(1).
	\end{align}
Now as $\ell_k \ra w_\la$ pointwise a.e. in $\Om$ and $|\{x\in \Om:(v_\la=w_\la)(x)=a\}|=0$, using estimates similar to \eqref{5.19} we have
\begin{align*}
	\I\Om (\chi_{\{v_\la+w_\la<a\}}(v_\la+w_\la)^{-\gamma}-\chi_{\{v_\la<a\}}v_\la^{-\gamma})w_\la dx+\I\Om (\chi_{\{v_\la<a\}}v_\la^{-\gamma}-z_k^{\ell_k}(v_\la+\ell_k)^{-\gamma})\ell_kdx =o_k(1).
\end{align*}
Using above estimate and Brezis-Lieb lemma, we obtain form \eqref{e5.24}
\begin{align}\label{e5.25}
	o_k(1) \leq \|\ell_k -w_\la\|^2- \la \I\Om\I\Om \frac{(w_\la-\ell_k)^{2_\mu^\ast}(y)(w_\la-\ell_k)^{2_\mu^\ast}(x)}{|x-y|^\mu}dxdy.
\end{align}
Also as $\mc G_\la(\ell_k) \leq \frac{1}{k}$, we have
\begin{align*}
 \mc G_\la(\ell_k)=&\frac12 \|\ell_k\|^2-\la \I\Om F(x,\ell_k)dx - \frac{\la}{2 2_\mu^\ast} \I\Om\I\Om \frac{(\ell_k+v_\la)^{2_\mu^\ast}(\ell_k+v_\la)^{2_\mu^\ast}}{|x-y|^\mu}dxdy\\
	&-\la \I\Om\I\Om \frac{v_\la^{2_\mu^\ast}v_\la^{2_\mu^\ast-1}\ell_k}{|x-y|^\mu}dxdy \leq \frac{1}{k}.
\end{align*}
From the fact that $\ell_k \rightharpoonup w_\la$ weakly in $H_0^1(\Om)$, this implies
\begin{align}\nonumber\label{e5.26}
	\frac12\|\ell_k -w_\la\|^2 -& \frac{\la}{22_\mu^\ast}\I\Om\I\Om \frac{(w_\la-\ell_k)^{2_\mu^\ast}(y)(w_\la-\ell_k)^{2_\mu^\ast}(x)}{|x-y|^\mu}dxdy +\mc G_\la(w_\la)+\la \I\Om F(x,w_\la)dx\\
	 &-\la \I\Om F(x,\ell_k)dx \leq o_k(1).
\end{align}
Now using the Hardy's inequality and Vitali's convergence theorem as in \eqref{5.19} one can check that $\ds \I\Om F(x,\ell_k)dx\ra \ds \I\Om F(x,w_\la)dx$ as $k \ra \infty$. Also as $\mc G_\la(w_\la)=0$, \eqref{e5.26} implies
\begin{align}\label{e5.27}
	\frac12\|\ell_k -w_\la\|^2 -\frac{\la}{22_\mu^\ast}\I\Om\I\Om \frac{(w_\la-\ell_k)^{2_\mu^\ast}(y)(w_\la-\ell_k)^{2_\mu^\ast}(x)}{|x-y|^\mu}dxdy \leq o_k(1).
\end{align}
Now from \eqref{e5.21}, \eqref{e5.25} and \eqref{e5.27} we get $\left(\frac{1}{2}-\frac{1}{22_\mu^\ast}\right)\|\ell_k-w_\la\| \leq o_k(1)$ and hence $\ell_k \ra w_\la$ in $H_0^1(\Om)$. \QED
		\end{proof}		 		
	Next we consider the case $(MP)$. Here to deal with the critical nonlinearity, we use the following Talenti functions to study the critical level:
	\begin{align*}
		V_\e(x) =S^\frac{(n-\mu)(2-n)}{4(n-\mu+2)} (C(n,\mu))^\frac{2-n}{2(n-\mu+2)}\left(\frac{\e}{\e^2+|x|^2}\right)^\frac{n-2}{2},~0<\e<1.
	\end{align*}
Fix any $y \in \Om_a =\{x\in \Om:v_\la(x)<a\}$. Let $r >0$ such that $B_{4r}(y) \subset \Om$. Now define $\psi \in C_c^\infty(\Om)$ such that $0 \leq \eta \leq 1$ in $\mb R^n$, $\eta \equiv 1$ in $B_r(y)$ and $\eta \equiv 0$ in $\mb R^n \setminus B_{2r}(y)$. For each $\e >0$ and $x \in \mb R^n$, we define $w_\e(x) =\psi(x)V_\e(x)$. In the following, we set the notation
\begin{align*}
	\|u\|_{HL}^{22_\mu^\ast} =\I\Om\I\Om \frac{|u(y)|^{2_\mu^\ast}|u(x)|^{2_\mu^\ast}}{|x-y|^\mu}dxdy.
\end{align*}
	\begin{Proposition}\label{p4.11}
		Let $n \geq 3$, $0<\mu <n$ then the following holds:
		\begin{enumerate}[label =(\roman*)]
			\item  $\| w_\e\| ^2 \leq S_{H,L}^\frac{2n-\mu}{n-\mu+2}+O (\e^{n-2}).$
			\item $\|w_\e\|_{HL}^{22_\mu^*} \leq S_{H,L}^\frac{2n-\mu}{n-\mu+2} +O(\e^n)$.
			\item $\|w_\e\|_{HL}^{22_\mu^*} \geq  S_{H,L}^\frac{2n-\mu}{n-\mu+2} -O(\e^n)$.
		\end{enumerate}
	\end{Proposition}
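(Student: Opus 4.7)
The strategy is to compare the truncated bubble $w_\e=\psi V_\e$ with the untruncated extremal $V_\e$, for which both norms are known. Indeed, $V_\e$ is normalized as in Lemma \ref{l2.3} so that, by scale-invariance of the quotient under $x\mapsto x/\e$,
\[
\|V_\e\|^2 \;=\; \|V_\e\|_{HL}^{2\cdot 2_\mu^\ast} \;=\; S_{H,L}^{(2n-\mu)/(n-\mu+2)}\qquad\text{for every }\e>0,
\]
so each assertion reduces to controlling the cost of the cutoff. This cost is localized in the annulus $B_{2r}(y)\setminus B_r(y)$ for the Dirichlet term and in $B_r(y)^c$ for the Hartree term. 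Throughout we use the pointwise decay $V_\e(x)\leq C\e^{(n-2)/2}|x-y|^{-(n-2)}$ and $|\nabla V_\e(x)|\leq C\e^{(n-2)/2}|x-y|^{-(n-1)}$ valid for $|x-y|\geq r$ once $\e$ is small.

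For (i), I would expand
\[
|\nabla w_\e|^2 \;=\; \psi^2|\nabla V_\e|^2 + 2\psi V_\e\,\nabla\psi\cdot\nabla V_\e + V_\e^2|\nabla\psi|^2,
\]
writing $\|w_\e\|^2 = \|V_\e\|^2 - \int(1-\psi^2)|\nabla V_\e|^2\,dx + R_\e$ with $R_\e$ collecting the two cross terms. All corrections are supported in $\{|x-y|\geq r\}$, and a radial integration against the gradient decay bound gives $C\e^{n-2}\!\int_r^{\infty}\!\rho^{-2(n-1)}\rho^{n-1}\,d\rho=O(\e^{n-2})$.

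Assertion (ii) is essentially trivial: since $0\leq\psi\leq 1$, $w_\e\leq V_\e$ pointwise, hence by monotonicity $\|w_\e\|_{HL}^{2\cdot 2_\mu^\ast}\leq\|V_\e\|_{HL}^{2\cdot 2_\mu^\ast}=S_{H,L}^{(2n-\mu)/(n-\mu+2)}$. For the lower bound (iii), I set $h:=V_\e^{2_\mu^\ast}-w_\e^{2_\mu^\ast}\geq 0$, supported in $B_r(y)^c$, and expand the bilinear form
\[
\|V_\e\|_{HL}^{2\cdot 2_\mu^\ast} - \|w_\e\|_{HL}^{2\cdot 2_\mu^\ast} \;=\; 2\int\!\!\int\frac{h(x)\,w_\e^{2_\mu^\ast}(z)}{|x-z|^\mu}\,dxdz + \int\!\!\int\frac{h(x)\,h(z)}{|x-z|^\mu}\,dxdz.
\]
For the first double integral I split the $z$-variable: when $|z-y|\leq r/2$ the separation $|x-z|\geq r/2$ permits the direct bound $\leq C\|h\|_1\|w_\e^{2_\mu^\ast}\|_1$, and the substitution $x=\e\xi$ produces $\int_{B_r(y)^c}V_\e^{2_\mu^\ast}=O(\e^{(2n-\mu)/2})$ and $\int_{\mb R^n}V_\e^{2_\mu^\ast}=O(\e^{\mu/2})$, giving a combined $O(\e^n)$; when $|z-y|\in[r/2,2r]$ Proposition \ref{hls} with $p=q=2n/(2n-\mu)$ (noting $2_\mu^\ast p=2^\ast$) gives $O(\e^{2n-\mu})=o(\e^n)$, using $\int_{|\cdot-y|\geq r/2}V_\e^{2^\ast}=O(\e^n)$. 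The second double integral is controlled by HLS alone, yielding $\|h\|_p^2=O(\e^{2n-\mu})=o(\e^n)$.

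The main delicacy lies in (iii): a blanket application of Hardy-Littlewood-Sobolev over all of $\mb R^n$ gives only $O(\e^{(2n-\mu)/2})$, which is weaker than $O(\e^n)$ as soon as $\mu>0$. The sharper rate is recovered by exploiting the separation between $\text{supp}(h)$ and the bulk of $\text{supp}(w_\e^{2_\mu^\ast})\subset B_{2r}(y)$, and by routing the $L^p$ estimate through $V_\e^{2^\ast}$ (whose tail mass in $B_r(y)^c$ scales exactly as $\e^n$) rather than through $V_\e^{2_\mu^\ast}$ (whose tail mass scales only as $\e^{(2n-\mu)/2}$).
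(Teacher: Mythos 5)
Your argument is correct. The paper itself does not prove Proposition \ref{p4.11}: it simply defers part (i) to \cite[Lemma 1.46]{W} and parts (ii)--(iii) to \cite[Proposition 2.8]{GMS1}, so your write-up is a genuine self-contained reconstruction rather than a variant of an argument given in the text. Your normalization identity $\|V_\e\|^2=\|V_\e\|_{HL}^{2\cdot 2_\mu^\ast}=S_{H,L}^{(2n-\mu)/(n-\mu+2)}$ is the right way to read the constants in the definition of $V_\e$ (it follows from the Euler--Lagrange equation for the extremal together with $2_\mu^\ast/(2_\mu^\ast-1)=(2n-\mu)/(n-\mu+2)$ and scale invariance), and the cutoff-cost computations for (i) are the standard Brezis--Nirenberg ones. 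Part (ii) you actually prove in the stronger form with no error term, via pointwise monotonicity of the Hartree energy, which is cleaner than what is stated. The real content is (iii), and you correctly identify the pitfall: a global Hardy--Littlewood--Sobolev bound on the correction only gives $O(\e^{(2n-\mu)/2})$, which is insufficient for $\mu>0$; your splitting of the $z$-integration according to the distance from $y$, exploiting the separation between $\mathrm{supp}(h)\subset B_r(y)^c$ and the near region, together with routing the $L^{2n/(2n-\mu)}$ norm of $h$ through the tail of $V_\e^{2^\ast}$ (whose mass outside a fixed ball scales as $\e^n$), recovers the rate $O(\e^n)$, and the exponent bookkeeping ($\tfrac{2n-\mu}{2}+\tfrac{\mu}{2}=n$ in the near region, $2n-\mu>n$ in the annular region) is right. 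This is essentially the mechanism used in the cited reference, so the two approaches coincide in substance; what yours buys is transparency about exactly where the $O(\e^n)$ rate comes from.
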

	\begin{proof}
	For proof of part $(i)$, we refer to \cite[Lemma 1.46]{W}. For $(ii)$ and $(iii)$, see \cite[Proposition 2.8]{GMS1}. \QED 
\end{proof}	
\begin{Lemma}\label{l4.12}
	The following holds:
	\begin{enumerate}[label=(\roman*)]
		\item If $ \mu < \min\{4,n\}$ then for all $\kappa <1$,
		\begin{align*}
			\| v_\la + t w_\e\|_{HL}^{22_\mu^*} \geq& \|v_\la\|_{HL}^{22_\mu^*}+\|tw_\e\|_{HL}^{22_\mu^*} +\widetilde{M} t^{22_\mu^* -1}\I\Om\I\Om \frac{w_\e^{2_\mu^*}(y)w_\e^{2_\mu^*-1}(x)v_\la(x)}{|x-y|^\mu}dxdy\\
			+& 22_\mu^*t \I\Om\I\Om \frac{v_\la^{2_\mu^*}(y)v_\la^{2_\mu^*-1}(x) w_\e(x)}{|x-y|^\mu}dxdy -O(\e^{(\frac{2n-\mu}{4})\kappa}).
		\end{align*}
		\item There exists a constant $T_0 >0$ such that $\ds\I\Om\I\Om \frac{w_\e^{2_\mu^*}(y)w_\e^{2_\mu^*-1}(x) v_\la(x)}{|x-y|^\mu}dxdy \geq \ds \widetilde{C} T_0 \e^{\frac{n-2}{2}}$.
	\end{enumerate}
\end{Lemma}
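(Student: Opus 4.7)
The plan is to prove (i) by a pointwise algebraic expansion of $(v_\la + tw_\e)^{2_\mu^*}(y)(v_\la + tw_\e)^{2_\mu^*}(x)$ integrated against $|x-y|^{-\mu}$, and (ii) by the explicit scaling of the Talenti bubble $V_\e$ at its concentration point.

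For (i), the assumption $\mu < \min\{4,n\}$ ensures that $p := 2_\mu^* = (2n-\mu)/(n-2) > 2$. The key algebraic step is to establish a pointwise inequality of the form
\begin{equation*}
(a+b)^p(c+d)^p \geq a^pc^p + b^pd^p + p\bigl(a^pc^{p-1}d + a^{p-1}bc^p\bigr) + \widetilde{M}\bigl(b^pd^{p-1}c + b^{p-1}ad^p\bigr) - R(a,b,c,d),
\end{equation*}
valid for $a,b,c,d \geq 0$ and $p > 2$, where $R$ collects the remaining mixed products. Such an inequality is obtained by combining the Taylor-type bounds $(A+B)^p \geq A^p + pA^{p-1}B$ and $(A+B)^p \geq B^p + pAB^{p-1}$ (each valid for $A,B \geq 0$, $p > 1$) with a case distinction based on the sizes of $b/a$ and $d/c$. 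Substituting $a = v_\la(y),\,b = tw_\e(y),\,c = v_\la(x),\,d = tw_\e(x)$ and integrating against $|x-y|^{-\mu}$, the pure terms yield $\|v_\la\|_{HL}^{22_\mu^*}$ and $\|tw_\e\|_{HL}^{22_\mu^*}$; the linear-in-$t$ cross terms combine, via the $x\leftrightarrow y$ symmetry of the Choquard kernel, into the $22_\mu^* t$-term displayed in the statement; the $t^{22_\mu^*-1}$ cross terms produce the $\widetilde{M}$-term. The remaining mixed products (each of the form $v_\la^{p-\sigma}(y)w_\e^{\sigma}(y)\cdot v_\la^{p-\tau}(x)w_\e^\tau(x)$ for intermediate $\sigma,\tau$) are estimated through the Hardy-Littlewood-Sobolev inequality together with $v_\la \in L^\infty(\Om)$; this reduces the problem to controlling $L^q$-norms of $w_\e$, whose known scaling $\|V_\e\|_{L^q}^q \sim \e^{\alpha(q)}$ for appropriate $\alpha(q) > 0$ delivers, upon optimizing over $(\sigma,\tau)$, the claimed $O(\e^{(2n-\mu)\kappa/4})$ error. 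The factor $\kappa<1$ absorbs logarithmic corrections arising in the borderline integrability case.

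For (ii), the fact that $y$ lies in the interior of $\Om$ together with Proposition \ref{p3.1} and Lemma \ref{l4.2} gives $v_\la \geq c_0 > 0$ on $\overline{B_{2r}(y)}$. After translating coordinates so that $y = 0$, we have $\psi \equiv 1$ and $w_\e = V_\e$ on $B_r(0)$. Restricting the integral to $B_r(0)\times B_r(0)$ and performing the change of variables $x = \e z$, $y' = \e z'$, one computes
\begin{equation*}
V_\e^{2_\mu^*}(\e z') = C_1 \e^{-\frac{2n-\mu}{2}}(1+|z'|^2)^{-\frac{2n-\mu}{2}}, \qquad V_\e^{2_\mu^*-1}(\e z) = C_2 \e^{-\frac{n-\mu+2}{2}}(1+|z|^2)^{-\frac{n-\mu+2}{2}},
\end{equation*}
which, together with $|x-y'|^{-\mu} = \e^{-\mu}|z-z'|^{-\mu}$ and Jacobian $\e^{2n}$, give an overall $\e$-factor
\begin{equation*}
\e^{-\frac{2n-\mu}{2} - \frac{n-\mu+2}{2} - \mu + 2n} = \e^{\frac{n-2}{2}}
\end{equation*}
multiplied by a double integral over $|z|,|z'| \leq r/\e$ that tends to a strictly positive, finite limit as $\e \to 0^+$ (finiteness following from the Hardy-Littlewood-Sobolev inequality applied to the pair $V_1^{2_\mu^*}$ and $V_1^{2_\mu^*-1}$ on $\R^n$). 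Choosing $\widetilde{C}T_0$ to be $c_0$ times any lower bound for this limit yields the claim.

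The main obstacle is part (i): the algebraic inequality must be sharp enough to isolate both asymmetric cross terms (the linear-in-$t$ and the $t^{22_\mu^*-1}$ ones), yet loose enough that the residual $R$ can be controlled by a genuinely decaying power of $\e$. The combinatorial bookkeeping of the nine mixed products, each requiring its own optimal Lebesgue exponent in Hardy-Littlewood-Sobolev, is the technical heart of the proof and the source of the logarithmic corrections encoded in $\kappa < 1$.
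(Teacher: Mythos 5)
Your argument is essentially the proof the paper points to: the paper gives no details and simply cites \cite[Lemma 4.2]{GS}, where part (i) is proved by exactly this kind of pointwise expansion of $(a+b)^{2_\mu^*}(c+d)^{2_\mu^*}$ (valid because $\mu<\min\{4,n\}$ forces $2_\mu^*>2$) followed by Hardy--Littlewood--Sobolev estimates of the mixed terms via the $L^q$-scaling of the bubble, and part (ii) by the rescaling $x=\e z$ near the concentration point where $v_\la$ is bounded below. Your exponent bookkeeping (the $\e^{\frac{n-2}{2}}$ rate in (ii), and the balanced mixed term whose borderline integrability produces the logarithm absorbed by $\kappa<1$ in (i)) is consistent with the stated error $O(\e^{(\frac{2n-\mu}{4})\kappa})$.
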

\begin{proof}
	For a proof, see the proof of  \cite[Lemma 4.2]{GS}. \QED
\end{proof}
Next we have the following lemma:
\begin{Lemma}\label{l5.9}
	There exists $\e_0$ and $R_0 \geq 1$ such that
	\begin{enumerate}[label=(\roman*)]
		\item $\mc G_\la (Rw_\e)<\mc G_\la(0)=0$ for all $\e \in (0,\e_0)$ and $R \geq R_0$.
		\item $\mc G_\la(tR_0w_\e) <\ds\frac12 \left(\frac{n-\mu+2}{2n-\mu}\right) \left(\frac{S_{H,L}^\frac{2n-\mu}{n-\mu+2}}{\la^\frac{n-2}{n-\mu+2}}\right)$ for all $t \in (0,1]$ and  $\e \in (0,\e_0)$.
	\end{enumerate}
\end{Lemma}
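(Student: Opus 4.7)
The plan hinges on the concentration of the Talenti bubbles $w_\e$ at the interior point $y\in\Om_a$, combined with Proposition 4.11 and the refined expansions of Lemma 4.12. For (i), substituting $w=Rw_\e$ in $\mc G_\la$ and bounding $\|Rw_\e+v_\la\|_{HL}^{22_\mu^\ast}$ from below (by Lemma 4.12(i) or a direct Brezis--Lieb splitting), the term $-\frac{\la R^{22_\mu^\ast}}{22_\mu^\ast}\|w_\e\|_{HL}^{22_\mu^\ast}$ overwhelms the quadratic $\frac{R^2}{2}\|w_\e\|^2$ as $R\to\infty$ because $22_\mu^\ast>2$; Proposition 4.11 bounds $\|w_\e\|^2$ above and $\|w_\e\|_{HL}^{22_\mu^\ast}$ below uniformly in $\e\in(0,\e_0)$, so a single $R_0$ works. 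The remaining pieces are subordinate: the $O(R)$ mixed term with $v_\la$, the singular contribution $-\la\I\Om F(x,Rw_\e)\,dx \leq C\la R\e^{(n-2)/2}$ (by Hardy using $v_\la\succ\phi_\gamma$ and that $y$ is interior), and the fixed constant $\mc G_\la(0)$.

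For (ii), set $s=tR_0$ and apply Lemma 4.12(i) with $\kappa\in\bigl(\frac{2(n-2)}{2n-\mu},1\bigr)$, a range nonempty precisely because $\mu<4$. The $O(s)$ mixed term from Lemma 4.12(i) cancels exactly the linear cross term in $\mc G_\la$, while Lemma 4.12(ii) turns the $s^{22_\mu^\ast-1}$ coefficient of the interaction integral $I_1$ into a strictly negative contribution $-\frac{\la\tilde M\tilde CT_0}{22_\mu^\ast}s^{22_\mu^\ast-1}\e^{(n-2)/2}$. Thus
\begin{align*}
\mc G_\la(sw_\e)\leq\ & \frac{s^2}{2}\|w_\e\|^2-\frac{\la s^{22_\mu^\ast}}{22_\mu^\ast}\|w_\e\|_{HL}^{22_\mu^\ast}+\mc G_\la(0) - c_1 s^{22_\mu^\ast-1}\e^{(n-2)/2} \\
& -\la\I\Om F(x,sw_\e)\,dx + O(\e^{\kappa(2n-\mu)/4}).
\end{align*}
Maximizing the unperturbed profile $g(s):=\frac{s^2}{2}\|w_\e\|^2-\frac{\la s^{22_\mu^\ast}}{22_\mu^\ast}\|w_\e\|_{HL}^{22_\mu^\ast}$ yields $\max_{s>0}g=\frac{n-\mu+2}{2(2n-\mu)}\la^{-(n-2)/(n-\mu+2)}\bigl(\|w_\e\|^2/\|w_\e\|_{HL}^2\bigr)^{(2n-\mu)/(n-\mu+2)}$, which by Proposition 4.11 is bounded above by $\frac12\frac{n-\mu+2}{2n-\mu}\la^{-(n-2)/(n-\mu+2)}S_{H,L}^{(2n-\mu)/(n-\mu+2)} + O(\e^{n-2})$. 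The argmax $s^*$ is of order one uniformly in small $\e$, so the coefficient of $-\e^{(n-2)/2}$ in the gain is strictly positive and $\e$-independent. The singular positive contribution $-\la\I\Om F(x,sw_\e)\leq C_2 s\e^{(n-2)/2}$ is linear in $s$, and at $s=s^*$ the polynomial factor $(s^*)^{22_\mu^\ast-1}$ (with $22_\mu^\ast-1>1$) strictly dominates $s^*$, making the net $\e^{(n-2)/2}$-coefficient negative. This gain then beats both errors $O(\e^{n-2})$ and $O(\e^{\kappa(2n-\mu)/4})$ for $\e$ small (by the choice of $\kappa$ and $n\geq3$), giving strict inequality at $s=s^*$. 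For $s$ far from $s^*$ (near $0$ or near $R_0$), the bound is trivial since $\mc G_\la(0)<0$ and $\mc G_\la(R_0w_\e)<0$ by (i).

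The main obstacle is preserving the strict inequality across the peak of $g$ in presence of the positive perturbation $-\la\I\Om F(x,sw_\e)$; the saving feature is that this perturbation is linear in $s$ whereas the interaction gain from Lemma 4.12(ii) carries the higher polynomial weight $s^{22_\mu^\ast-1}$. The hypothesis $\mu<\min\{4,n\}$ is essential to choose $\kappa<1$ so that the Lemma 4.12(i) remainder of order $\e^{\kappa(2n-\mu)/4}$ is strictly of higher order than $\e^{(n-2)/2}$.
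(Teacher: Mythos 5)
Your overall strategy is the same as the paper's: expand the energy along $s\mapsto sw_\e$ using Proposition \ref{p4.11} and Lemma \ref{l4.12}, maximize the fibering profile $g(s)=\frac{s^2}{2}\|w_\e\|^2-\frac{\la s^{22_\mu^\ast}}{22_\mu^\ast}\|w_\e\|_{HL}^{22_\mu^\ast}$ to land at the threshold up to $O(\e^{n-2})$, and use the interaction gain of Lemma \ref{l4.12}(ii), of order $\e^{(n-2)/2}$, to push strictly below. Your computation of $\max_s g$ is correct, and your choice $\kappa\in\bigl(\tfrac{2(n-2)}{2n-\mu},1\bigr)$ (nonempty since $\mu<4$) is actually cleaner than the paper's $\kappa=2/2_\mu^\ast$, because it makes the Lemma \ref{l4.12}(i) remainder strictly $o(\e^{(n-2)/2})$ rather than borderline. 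Part (i) is fine.

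However, there is a genuine gap in your treatment of the singular term. You bound $-\la\int_{\Om}F(x,sw_\e)\,dx\leq C_2\,s\,\e^{(n-2)/2}$, which is of \emph{the same order} as the gain $-c_1 s^{22_\mu^\ast-1}\e^{(n-2)/2}$, and then argue that at $s=s^*$ the higher power of $s$ ``strictly dominates.'' That step does not close: $c_1$ (from Lemma \ref{l4.12}(ii), depending on $v_\la$, the cutoff and $T_0$) and $C_2$ (from $v_\la^{-\gamma}$ near the concentration point) are unrelated constants, so $c_1(s^*)^{22_\mu^\ast-1}>C_2 s^*$ cannot be asserted; worse, for $s$ slightly below $s^*$ the profile $g(s)$ is still within $O(\e^{n-2})$ of its maximum while $C_2 s-c_1 s^{22_\mu^\ast-1}$ could be positive there. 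The paper avoids this by proving the sharper estimate that the singular contribution is $o(\e^{(n-2)/2})$ (the step it delegates to \cite[Lemma 3.2]{DPST1}): the relevant quantity is not $-\int F(x,sw_\e)$ bounded crudely, but the second-order difference $\la\int_{\Om}\bigl(\chi_{\{v_\la<a\}}v_\la^{-\gamma}\,sw_\e-(H(v_\la+sw_\e)-H(v_\la))\bigr)dx$, whose first-order part cancels against the cross term $s\int_\Om\na v_\la\na w_\e$ via the equation for $v_\la$, leaving a remainder controlled by $\int_{B_{2r}(y)}w_\e^2$-type quantities on a region where $v_\la$ is bounded away from $0$. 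Without that refinement (or some other argument making the singular contribution strictly lower order than $\e^{(n-2)/2}$), the strict inequality in (ii) is not established.
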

\begin{proof}
	Noting that for $w \in X^+$, $J_\la(v_\la +w) =J_\la(v_\la)+\mc G_\la(w)$, this is equivalent to show that
	\begin{enumerate}[label=\roman*)]
		\item $ J_\la (v_\la+Rw_\e)< J_\la(v_\la)$ for all $\e \in (0,\e_0)$ and $R \geq R_0$.
		\item $ J_\la(v_\la+tR_0w_\e) <J_\la(v_\la)+\ds\frac12 \left(\frac{n-\mu+2}{2n-\mu}\right) \left(\frac{S_{H,L}^\frac{2n-\mu}{n-\mu+2}}{\la^\frac{n-2}{n-\mu+2}}\right)$ for all $t \in (0,1]$ and  $\e \in (0,\e_0)$.
	\end{enumerate}
Now using the fact that $v_\la$ solves $(\mc P_\la)$, first we estimate $J_\la (v_\la +t R w_\e)$ as follows
\begin{align*}
	J_\la(v_\la+tRw_\e) =&\frac12\I\Om |\na(v_\la +t R w_\e)|^2dx -\la \I\Om F(v_\la +tRw_\e)dx -\frac{\la}{22_\mu^\ast}\|v_\la+tRw_\e\|_{HL}^{22_\mu^\ast}\\
	=&\frac12\I\Om |\na v_\la|^2dx +\frac{R^2t^2}{2}\I\Om|\na w_\e|^2dx +tR \I\Om \na v_\la \na w_\e dx\\
	&-\la \I\Om F(v_\la +tRw_\e)dx-\frac{\la}{22_\mu^\ast}\|v_\la+tRw_\e\|_{HL}^{22_\mu^\ast}\\
	=&\frac12\I\Om |\na v_\la|^2dx +\frac{R^2t^2}{2}\I\Om|\na w_\e|^2dx +\la tR \I\Om \chi_{\{v_\la<a\}}v_\la^{-\gamma}w_\e dx-\frac{\la}{22_\mu^\ast}\|v_\la+tRw_\e\|_{HL}^{22_\mu^\ast}\\
	&+\la tR \I\Om\I\Om \frac{v_\la^{2_\mu^\ast}(y)v_\la^{2_\mu^\ast-1}(x)w_\e(x)}{|x-y|^\mu}dxdy -\la \I\Om F(v_\la +tRw_\e)dx\\
	\leq & J_\la(v_\la) +\frac{1}{2}\|tRw_\e\|^2 -\frac{\la}{22_\mu^\ast}\|tR w_\e\|_{HL}^{22_\mu^\ast}-\frac{\la\widetilde{M} t^{22_\mu^* -1}}{22_\mu^\ast}\I\Om\I\Om \frac{w_\e^{2_\mu^*}(y)w_\e^{2_\mu^*-1}(x)v_\la(x)}{|x-y|^\mu}dxdy\\
	&+\la \I\Om \left(\chi_{\{v_\la<a\}}v_\la^{-\gamma}w_\e +F(v_\la)- F(v_\la +tRw_\e) \right) +O(\e^{(\frac{2n-\mu}{4})\kappa}).
\end{align*}
Using Proposition \ref{p4.11} and Lemma \ref{l4.12}, we obtain
\begin{align}\nonumber \label{e5.28}
	J_\la&(v_\la+ t Rw_\e)\\ \nonumber
	 \leq& J_\la(v_\la)+ \frac{t^2R^2}{2}\left(S_{H,L}^\frac{2n-\mu}{n-\mu+2}+O(\e^{\frac{n-2}{2}})\right)-\frac{\la t^{22_\mu^*}R^{22_\mu^*}}{2 2_\mu^*} \left(S_{H,L}^\frac{2n-\mu}{n-\mu+2}-O(\e^n)\right)+O(\e^{(\frac{2n-\mu}{4})\kappa})\\
	&+\la \I\Om \left(\chi_{\{v_\la<a\}}v_\la^{-\gamma}w_\e +F(v_\la)- F(v_\la +tRw_\e) \right)-\frac{\la\widetilde{M}t^{22_\mu^*-1}}{22_\mu^*}T_0 \e^\frac{n-2}{2}.
\end{align}
Finally by estimating the last integral in \eqref{e5.28} using the similar lines of \cite[Lemma 3.2]{DPST1} (Page no. 1671-1672), we conclude by taking $\kappa =\frac{2}{2_\mu^\ast}$ that
\begin{align*}
	J_\la&(v_\la+ t Rw_\e)\\
	 \leq& J_\la(v_\la)+ \frac{t^2R^2}{2}\left(S_{H,L}^\frac{2n-\mu}{n-\mu+2}+O(\e^{\frac{n-2}{2}})\right)-\frac{\la t^{22_\mu^*}R^{22_\mu^*}}{2 2_\mu^*} \left(S_{H,L}^\frac{2n-\mu}{n-\mu+2}-O(\e^n)\right)+O(\e^{(\frac{2n-\mu}{4})\kappa})\\
	&-\frac{\la\widetilde{M}t^{22_\mu^*-1}}{22_\mu^*}T_0 \e^\frac{n-2}{2}+o(\e^\frac{n-2}{2}).
\end{align*}
Now the Lemma follows using the arguments as in \cite[Lemma 6.4]{GGS1}. \QED
	\end{proof}
	\begin{Lemma}\label{l5.10}
		Let $(MP)$ hold. Then there exists a solution $w_\la \in X^+$ of $(\hat{P}_\la)$ and hence a second solution of $(\mc P_\la)$.
	\end{Lemma}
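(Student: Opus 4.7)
The plan is to apply a nonsmooth Mountain Pass argument on the closed convex cone $X^+$, using Ekeland's variational principle and the subcritical level estimate from Lemma \ref{l5.9}, and then extract a nontrivial generalized critical point of $\mc G_\la$. Following Remark \ref{r5.5}, any such point $w_\la$ will yield $v_\la + w_\la$ as a second solution of $(\mc P_\la)$.

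First, I fix $\e \in (0, \e_0)$ and $R_0 \geq 1$ as given by Lemma \ref{l5.9}, so that $\mc G_\la(R_0 w_\e) < 0 = \mc G_\la(0)$. Setting
\[
\Ga = \{\gamma \in C([0,1], X^+) \colon \gamma(0) = 0,\ \gamma(1) = R_0 w_\e\}, \qquad c = \inf_{\gamma \in \Ga} \max_{t \in [0,1]} \mc G_\la(\gamma(t)),
\]
the hypothesis $(MP)$ combined with Lemma \ref{l5.9}(i) ensures $0 < c$, while Lemma \ref{l5.9}(ii) forces
\[
c < \frac{1}{2}\left(\frac{n-\mu+2}{2n-\mu}\right)\frac{S_{H,L}^{(2n-\mu)/(n-\mu+2)}}{\la^{(n-2)/(n-\mu+2)}}.
\]
Since $X^+$ is closed and convex and $\mc G_\la$ is locally Lipschitz on $H_0^1(\Om)$ (Proposition \ref{p5.1}), a deformation/Ekeland argument on paths inside $X^+$ produces a sequence $\{\ell_k\} \subset X^+$ with $\mc G_\la(\ell_k) \to c$ and
\[
\mc G_\la^0(\ell_k, v - \ell_k) \geq -\tfrac{1}{k}\|v - \ell_k\|, \qquad \forall\, v \in X^+.
\]

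Next, I would establish boundedness of $\{\ell_k\}$ in $H_0^1(\Om)$. Testing the Ekeland inequality against $v = 2\ell_k$ and $v = 0$ and combining with the energy identity $\mc G_\la(\ell_k) \to c$, using that the singular terms are controlled by Hardy's inequality (as in \eqref{5.19}) and the discontinuous term satisfies $|F(x, s)| \leq M v_\la^{-\gamma} s$ on a set of small measure, we obtain the inequality
\[
\left(\tfrac{1}{2} - \tfrac{1}{2 \cdot 2_\mu^\ast}\right)\|\ell_k\|^2 \leq c + o(1) + C\|\ell_k\|,
\]
yielding $\|\ell_k\| \leq M$. Hence, up to a subsequence, $\ell_k \rightharpoonup w_\la$ weakly in $H_0^1(\Om)$ and a.e.\ in $\Om$, with $w_\la \in X^+$.

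The verification that $w_\la$ is a generalized critical point of $\mc G_\la$ is carried out exactly as in the latter part of the proof of Lemma \ref{l5.6}: testing the Ekeland inequality against $\aleph = (\ell_k + \e\varphi)^+$ for arbitrary $\varphi \in C_c^\infty(\Om)$, passing to weak/pointwise limits in each term (using Hardy's inequality plus Vitali for the singular term, the Brezis-Lieb lemma for the Choquard term, and weak-$*$ convergence of the truncation $z_k \in [\chi_{\{v_\la + \ell_k < a\}}, \chi_{\{v_\la + \ell_k \leq a\}}]$ in $L^\infty$), and letting $\e \to 0^+$ gives $\mc G_\la^0(w_\la, \varphi) \geq 0$ for every $\varphi \in H_0^1(\Om)$.

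The central obstacle is to show $w_\la \not\equiv 0$, and this is where the subcritical bound on $c$ enters through a concentration-compactness argument adapted to the Hartree nonlinearity. Suppose, for contradiction, $w_\la = 0$. Applying the Brezis-Lieb lemma for the nonlocal term (as in \eqref{e5.20}) together with the splitting
\[
\tfrac{1}{2}\|\ell_k\|^2 - \tfrac{\la}{2 \cdot 2_\mu^\ast}\|\ell_k\|_{HL}^{2\cdot 2_\mu^\ast} = \mc G_\la(\ell_k) + o(1) = c + o(1),
\]
and the inequality $\|\ell_k\|^2 \leq \la\|\ell_k\|_{HL}^{2 \cdot 2_\mu^\ast} + o(1)$ coming from the Ekeland condition tested against $\aleph = 2\ell_k$, together with $\|u\|^2 \geq S_{H,L}\|u\|_{HL}^2$, one deduces that if $\|\ell_k\| \to L > 0$, then
\[
c \geq \tfrac{1}{2}\left(\tfrac{n-\mu+2}{2n-\mu}\right) L^2 \geq \tfrac{1}{2}\left(\tfrac{n-\mu+2}{2n-\mu}\right)\frac{S_{H,L}^{(2n-\mu)/(n-\mu+2)}}{\la^{(n-2)/(n-\mu+2)}},
\]
contradicting the strict bound from Lemma \ref{l5.9}(ii); and if $L = 0$ then $c = 0$, contradicting $(MP)$. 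Therefore $w_\la \not\equiv 0$. Finally, invoking Remark \ref{r5.5} we conclude that $v_\la + w_\la$ is a second weak solution of $(\mc P_\la)$, as required.
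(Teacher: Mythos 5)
Your proposal is correct and follows essentially the same route as the paper: a mountain-pass class of paths in $X^+$, Ekeland's variational principle to produce a sequence satisfying $\mc G_\la^0(\ell_k,\aleph-\ell_k)\geq -\frac{1}{k}\|\aleph-\ell_k\|$ at a level $c$ strictly below $\frac12\left(\frac{n-\mu+2}{2n-\mu}\right)S_{H,L}^{(2n-\mu)/(n-\mu+2)}\la^{-(n-2)/(n-\mu+2)}$, boundedness, identification of the weak limit as a generalized critical point, and nontriviality via Brezis--Lieb plus the Hardy--Littlewood--Sobolev constant. The only cosmetic difference is that you rule out $w_\la\equiv 0$ by a direct contradiction, whereas the paper assumes $\mc G_\la(w_\la)=0$ and first proves strong convergence $w_k\to w_\la$ before contradicting $c_0>0$; both hinge on the identical computation.
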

\begin{proof}
	Define a complete metric space $(M,d)$ as 
	\begin{align*}
		M=\{\zeta \in C([0,1],&X^+):~\zeta(0)=0,~\|\zeta(1)\| >\kappa_1,~\mc G_\la(\zeta(1))<0\},\\
		&d(\zeta,\eta) =\max_{t \in [0,1]}\|\zeta(t)-\eta(t)\|.
		\end{align*}
	From $(i)$ of Lemma \ref{l5.9}, if $R$ is chosen large, it is clear that $M$ is non-empty. Let 
	$$ c_0 =\inf\limits_{\zeta \in M}\max\limits_{t \in [0,1]} \mc G_\la(\zeta(t)).$$
	 Then $(ii)$ of Lemma \ref{l5.9} and $(MP)$ implies that
	\begin{align}\label{5.28}
		0<c_0<\ds\frac12 \left(\frac{n-\mu+2}{2n-\mu}\right) \left(\frac{S_{H,L}^\frac{2n-\mu}{n-\mu+2}}{\la^\frac{n-2}{n-\mu+2}}\right).
	\end{align}
Define 
\begin{align*}
	\Phi(\zeta) =\max_{t \in [0,1]}\mc G_\la(\zeta(t)),~\zeta \in M.
\end{align*}
By applying Ekeland's variational principle to the above functional we get a sequence $\{\zeta_k\}_{k \in \mb N} \subset M$ such that
\begin{enumerate}[label=(\roman*)]
	\item $\max\limits_{t \in [0,1]} \mc G_\la (\zeta_k(t)) <c_0 +\frac{1}{k}$.
	\item $\max\limits_{t \in [0,1]} \mc G_\la (\zeta_k(t)) \leq \max\limits_{t \in [0,1]} \mc G_\la(\zeta(t)) +\frac{1}{k} \max\limits_{t \in [0,1]} \|\zeta(t)-\zeta_k(t)\|$ for all $\zeta \in M$.
\end{enumerate}
Setting $ \Gamma_k =\{t \in [0,1]: \mc G_\la(\zeta_k(t))=\max\limits_{s \in [0,1]} \mc G_\la (\zeta_k(s))\}$ we obtain by arguing as in \cite[Page no. 659]{BT} $t_k \in \Gamma_k$ such that for $w_k= \zeta_k(t_k)$ and $\aleph \in X^+$, we have
\begin{align}\label{e5.29}
	\mc G_\la^0 \left(w_k,\frac{\aleph-w_k}{\max\{1,\|\aleph-w_k\|\}}\right) \geq -\frac{1}{k}
\end{align}
and 
\begin{align}\label{e5.30}
	\mc G_\la(w_k) \ra c_0~\text{as}~k \ra \infty.
\end{align}
From \eqref{e5.30} and using the fact that $F(w_k)\leq 0$, we have
\begin{align}\label{e5.31}
	c_0+o_k(1)\geq&\frac12 \|w_k\|^2- \frac{\la}{2 2_\mu^\ast} \|w_k+v_\la\|_{HL}^{22_\mu^\ast}
	-\la \I\Om\I\Om \frac{v_\la^{2_\mu^\ast}(y)v_\la^{2_\mu^\ast-1}(x)w_k(x)}{|x-y|^\mu}dxdy.
\end{align}
Again substituting $\aleph =2w_k +v_\la$ in \eqref{e5.29}, by Remark \ref{r5.4} we obtain
\begin{align*}
	z_k^{2w_k+v_\la}=\tilde{z}_k \in [\chi_{\{w_k+v_\la<a\}},\chi_{\{w_k+v_\la\leq a\}}]
\end{align*}
such that
\begin{align}\label{e5.32}
	\|w_k+v_\la\|^2 -\la\|w_k +v_\la\|_{HL}^{22_\mu^\ast} -\la \I\Om\tilde{z}_k(w_k+v_\la)^{1-\gamma}\geq -\frac{1}{n}\max\{1,\|w_k+v_\la\|\}.
\end{align}
From \eqref{e5.31} and \eqref{e5.32} we derive
\begin{align*}
	\frac{1}{2} \|w_k\|^2-\frac{1}{2_\mu^\ast}\|w_k\|^2 \leq M_1 +M_2 \|w_k\|,
\end{align*}
where $M_1,~M_2$ are positive constants. This implies that $\{w_k\}_{k\in \mb N}$ is a bounded sequence and hence $w_k \rightharpoonup w_\la$ weakly in $H_0^1(\Om)$. As in the zero altitude case we see that $w_\la$ solves $(\hat{P}_\la)$. Now we claim that $w_k \ra w_\la$ in $H_0^1(\Om)$ and that $w_\la \not\equiv 0$. Without loss of generality we assume $\mc G_\la(w_\la) \ne 0$.\\
As $\|w_k\| \leq M$, from \eqref{e5.30}, for $\aleph \in X^+$ we have $\mc G_\la^0 (w_k,\aleph-w_k) \geq \ds -\frac{M_1}{k}(1+\|\aleph\|)=o_k(1)$. Then as in zero altitude case, we get
\begin{align}\label{e5.33}
	\|w_k-w_\la\| -\la \|w_k-w_\la\|^{22_\mu^\ast} =o_k(1).
\end{align}
	 Also using Brezis-Lieb, we have
	 \begin{align}\label{e5.34}\nonumber
	 	\mc G_\la (w_k) =&\frac{1}{2}\|w_k\|^2 -\la \I\Om F(w_k)dx-\frac{\la}{2 2_\mu^\ast} \|w_k+v_\la\|_{HL}^{22_\mu^\ast}
	 	-\la \I\Om\I\Om \frac{v_\la^{2_\mu^\ast}(y)v_\la^{2_\mu^\ast-1}(x)w_k(x)}{|x-y|^\mu}dxdy\\\nonumber
	 	=& \frac{1}{2}\|w_k-w_\la\|^2+\frac12 \|w_\la\|^2 +\I\Om \na(w_k-w_\la) \na w_\la dx-\la \I\Om F(w_k)dx\\ \nonumber
	 	&-\frac{\la}{2 2_\mu^\ast} \|w_k+v_\la\|_{HL}^{22_\mu^\ast}
	 	-\la \I\Om\I\Om \frac{v_\la^{2_\mu^\ast}(y)v_\la^{2_\mu^\ast-1}(x)w_k(x)}{|x-y|^\mu}dxdy\\ \nonumber
	 	=& \frac12\|w_k -w_\la\|^2-\frac{\la}{2 2_\mu^\ast} \|w_k-v_\la\|_{HL}^{22_\mu^\ast} +\frac12 \|w_\la\|^2-\la \I\Om F(w_k)dx\\ \nonumber
	 	&-\frac{\la}{2 2_\mu^\ast} \|w_\la+v_\la\|_{HL}^{22_\mu^\ast}
	 	-\la \I\Om\I\Om \frac{v_\la^{2_\mu^\ast}(y)v_\la^{2_\mu^\ast-1}(x)w_k(x)}{|x-y|^\mu}dxdy+o_k(1)\\ \nonumber
	 	=& \frac12\|w_k -w_\la\|^2-\frac{\la}{2 2_\mu^\ast} \|w_k-v_\la\|_{HL}^{22_\mu^\ast} +\mc G_\la(w_\la)+\la \I\Om(F(w_\la)-F(w_k))dx +o_k(1)\\
	 	=& \frac12\|w_k -w_\la\|^2-\frac{\la}{2 2_\mu^\ast} \|w_k-v_\la\|_{HL}^{22_\mu^\ast} +\mc G_\la(w_\la) +o_k(1).
	 \end{align}
 Now as $\mc G_\la(w_\la) =0$, using \eqref{5.28}, \eqref{e5.30}, \eqref{e5.33} and \eqref{e5.34}, we get
 \begin{align}\label{e5.35}
 	\|w_k-w_\la\|^2 =\left(\frac{2(2n-\mu)}{n-\mu+2}\right)c_0+o_k(1)<\frac{S_{H,L}^\frac{2n-\mu}{n-\mu+2}}{\la^\frac{n-2}{n-\mu+2}}+o_k(1).
 \end{align}
Also by Hardy-Littlewood-Sobolev inequality, we have
\begin{align}\label{e5.36}
	\|w_k -w_\la\|^2\left(1-\la S_{HL}^{-2_\mu^\ast}\|w_k-w_\la\|^{22_\mu^\ast-2}\right) \leq\|w_k -w_\la\|^2-\la\|w_k -w_\la\|_{HL}^{22_\mu^\ast}=o_k(1).
\end{align}
 Thus combining \eqref{e5.35} and \eqref{e5.36}, we obtain $w_k \ra w_\la$ in $H_0^1(\Om)$. This completes the proof. \QED
\end{proof}
We are now ready to give the \\
\textbf{Proof of Theorem \ref{tmr}}: The existence of the first solution $v_\la$ for all $\la \in (0,\La^a)$ and $\gamma >0$ follows from Lemma \ref{l4.2}. The existence of second solution $w_\la$ for the same range of $\la$ and $0<\gamma<3$ follows from Lemmata \ref{l5.6} and \ref{l5.10} keeping in view Remark \ref{r5.5}. \QED
\section{Regularity Results} \label{s6}
In this section, we will discuss the regularity results. First let us recall an important inequality for nonlocal nonlinearities by Moroz and Van Schaftingen \cite{MS1}.
\begin{Lemma} \label{l6.1}
	Let $n \geq 2$, $\mu \in (0,n)$ and $\theta \in (0,n)$. If $H,~K \in L^\frac{2n}{n-\mu+2}(\mb R^n) + L^\frac{2n}{n-\mu}(\mb R^n)$, $\left(1-\frac{\mu}{n}\right)<\theta<\left( 1+\frac{\mu}{n}\right)$, then for any $\e >0$, there exists $M_{\e,\theta} \in \mb R$ such that for any $ u \in H^1(\mb R^n)$,
	\begin{align*}
	\I{\mb R^n} \left(|x|^{-\mu} \ast (H |u|^\theta)\right)K|u|^{2-\theta} dx \leq \e^2 \I{\mb R^n} |\na u|^2dx +M_{\e,\theta} \I{\mb R^n} |u|^2dx.
	\end{align*}
\end{Lemma}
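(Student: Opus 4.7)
The plan is to combine the Hardy--Littlewood--Sobolev inequality (Proposition \ref{hls}) with Hölder's inequality, controlling $u$ in $L^2(\R^n)$ against the poorly-decaying pieces of $H$ and $K$ and in $L^{2^\ast}(\R^n)$ against the better-decaying pieces via the Sobolev embedding, and then invoking Young's inequality to split the resulting bound into a gradient term and an $L^2$ term.

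First I would write $H=H^{(1)}+H^{(2)}$ with $H^{(1)}\in L^{p_1}(\R^n)$, $H^{(2)}\in L^{p_2}(\R^n)$, where $p_1=2n/(n-\mu+2)$ and $p_2=2n/(n-\mu)$, and similarly $K=K^{(1)}+K^{(2)}$. Given $\e>0$, by density of $C_c^\infty(\R^n)$ in $L^{p_i}(\R^n)$, I further decompose $H^{(i)}=h^{(i)}_\e+\tilde h^{(i)}_\e$ with $\|h^{(i)}_\e\|_{p_i}<\e$ and $\tilde h^{(i)}_\e$ bounded with compact support; analogously for $K$. Expanding bilinearly, the left-hand side of the stated inequality becomes a finite sum of terms of the type $\I{\R^n}(|x|^{-\mu}\ast(F|u|^\theta))G|u|^{2-\theta}\,dx$, with $F$ one of the four pieces of $H$ and $G$ one of the four pieces of $K$.

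For each such term I would apply Proposition \ref{hls} to the pair $(F|u|^\theta,\,G|u|^{2-\theta})$, producing a bound $C\|F|u|^\theta\|_r\|G|u|^{2-\theta}\|_q$ with $1/r+1/q=(2n-\mu)/n$. Hölder then splits each factor using the available $L^{p_i}$ control on $F$ or $G$; the remaining exponents for $|u|$ are forced by the balance. When $F\in L^{p_1}$, the natural choice uses $|u|\in L^{2^\ast}$, yielding $\|F|u|^\theta\|_r\prec\|F\|_{p_1}\|u\|_{2^\ast}^\theta$. When $F\in L^{p_2}$, one uses $|u|\in L^2$, giving $\|F|u|^\theta\|_r\prec\|F\|_{p_2}\|u\|_2^\theta$; admissibility of HLS in this case ($r>1$) requires $(n-\mu)/(2n)+\theta/2<1$, equivalently $\theta<1+\mu/n$, and the symmetric condition on $G$ gives $\theta>1-\mu/n$. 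This is precisely the hypothesis on $\theta$.

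Collecting the sixteen terms, three families appear. The \textit{small--small} family is dominated by $C\e^2\|u\|_{2^\ast}^2\le C\e^2 S_0\|\na u\|_2^2$ (with $S_0$ the Sobolev constant), which is absorbed into $\e^2\|\na u\|_2^2$ after rescaling $\e$. The \textit{bounded--bounded} family is dominated by $C_\e\|u\|_2^2$ and fits directly into $M_{\e,\theta}\|u\|_2^2$. The subtle step is the \textit{mixed} family: each such term admits a bound of the form $C\,\e\,M_\e\,\|u\|_s^2$ with some $s\in[2,2^\ast]$ arising from the mismatched Hölder exponents; interpolation $\|u\|_s^2\le\|u\|_2^{2(1-\al)}\|u\|_{2^\ast}^{2\al}$ followed by Young's inequality converts this into $\tilde\e\|\na u\|_2^2+C_{\tilde\e,\e}\|u\|_2^2$. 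Choosing $\tilde\e$ small first and then $\e$ small enough absorbs all gradient contributions into $\e^2\|\na u\|_2^2$ and gathers the remaining constants into $M_{\e,\theta}$. The main obstacle is ensuring that the smallness driving the $\|\na u\|_2^2$ coefficient comes from $\e$, not from the (uncontrolled) constants $M_\e$, which requires care in the order in which the parameters are chosen.
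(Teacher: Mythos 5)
The paper does not actually prove this lemma: it is recalled verbatim from Moroz--Van Schaftingen \cite{MS1} (it is Lemma 3.2 there, with $\alpha=n-\mu$), and no argument is given in the text. Your proposal is, in substance, a reconstruction of the proof in that reference: decompose $H$ and $K$ in the sum space into a piece of small norm plus a bounded compactly supported piece, apply Proposition \ref{hls} followed by H\"{o}lder to each of the resulting products, pair the $L^{2n/(n-\mu+2)}$ components with $u\in L^{2^\ast}$ and the $L^{2n/(n-\mu)}$ components with $u\in L^{2}$, and absorb the $L^{2^\ast}$-contributions into the gradient term via Sobolev, interpolation and Young. The exponent bookkeeping is right where you spell it out: the admissibility conditions $r,q>1$ for the $L^{2n/(n-\mu)}$--$L^2$ pairings are exactly $1-\mu/n<\theta<1+\mu/n$, and the only term that genuinely forces the decomposition parameter to be small is the one carrying the full power $\|u\|_{2^\ast}^2$, since Young's inequality cannot reduce its gradient coefficient.

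Two points in your sketch deserve more care, though neither is a fatal gap. First, in the mixed terms the two Lebesgue exponents for $u$ cannot in general be taken equal: with $F\in L^{2n/(n-\mu+2)}$ and $G\in L^{2n/(n-\mu)}$ the balance condition reads $\theta/t_1+(2-\theta)/t_2=(n-1)/n$ with $t_1,t_2\in[2,2^\ast]$, and the additional constraints $r,q>1$ rule out the symmetric choice $t_1=t_2=2n/(n-1)$ when $\theta$ is close to the endpoints; one must verify that the feasible region for $(t_1,t_2)$ is nonempty under the full hypothesis on $\theta$ (it is, but this uses the hypothesis at full strength and is not automatic). Second, your ordering of the parameters is backwards as written: one should fix the target $\e$ first and then choose both the smallness threshold of the decomposition and the Young parameter $\tilde\e$ in terms of $\e$; choosing $\tilde\e$ ``first'' and $\e$ ``after'' would leave the gradient coefficient coming from the mixed terms uncontrolled by $\e$. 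With these adjustments the argument is complete and matches the cited source.
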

We have the following Lemma which provides the $L^\infty$ estimates and boundary behaviour for the weak solutions of $(\mc P_\la)$.
\begin{Lemma}\label{l6.2}
	Let $u$ be a nonnegative weak solution of $(\mc P_\la)$. Then $u \in L^\infty(\Om)$. 
\end{Lemma}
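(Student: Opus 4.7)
\textbf{Proof plan for Lemma \ref{l6.2}.} The argument is a Moser-type iteration that sidesteps the discontinuous-singular nonlinearity via the simple observation that $\chi_{\{u<a\}}u^{-\gamma}$ vanishes identically on $\{u \geq m\}$ for any $m \geq a$; hence an $L^\infty$ bound from above is controlled entirely by the critical Choquard term. By Theorem \ref{tmr} we already know $u^\omega \in H^1_0(\Omega)$ for some $\omega \geq 1$, so $u \in L^{2^*}(\Omega)$, which serves as the starting integrability for the iteration.

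The plan is to test the weak formulation of $(\mathcal P_\lambda)$ with the truncated test function $\varphi = u\, u_L^{2(\beta-1)}$, where $u_L := \min(u,L)$, $L > 0$, and $\beta \geq 1$; admissibility is justified by density using $(u-\nu)^+ \in H^1_0(\Omega)$ for every $\nu > 0$ (Definition 2.1) together with the boundedness $u_L \leq L$. After standard algebraic manipulation, the left-hand side is bounded below by $c_\beta\, \bigl|u\, u_L^{\beta-1}\bigr|_{L^{2^*}(\Omega)}^2$ via Sobolev embedding, while the singular piece on the right is controlled by Hardy's inequality and Remark \ref{r2.4} using $u \geq M\phi_\gamma$. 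For the critical Choquard piece, I would split
\begin{align*}
u^{2_\mu^*} \;=\; u^{2_\mu^*}\chi_{\{u \leq K\}} \;+\; u^{2_\mu^*}\chi_{\{u > K\}},
\end{align*}
and apply Hardy--Littlewood--Sobolev (Proposition \ref{hls}) together with H\"older; the first summand yields a bounded convolution potential (since $\Omega$ is bounded) giving a lower-order contribution absorbable by Sobolev, while the second has arbitrarily small $L^{2n/(2n-\mu)}$ norm for $K$ large (by absolute continuity of the integral, using the already-established $u \in L^{2^*}$), so its critical contribution can be absorbed into the left-hand side. Passing $L \to \infty$ then yields $u \in L^{2^*\beta}(\Omega)$; iterating the step $\beta_{k+1} = (2^*/2)\,\beta_k$ produces $u \in L^p(\Omega)$ for every $p < \infty$. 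The right-hand side of $(\mathcal P_\lambda)$ is then in $L^p(\Omega)$ for some $p > n/2$, and classical elliptic regularity (Calder\'on--Zygmund plus Sobolev embedding) delivers $u \in L^\infty(\Omega)$.

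The main difficulty is the criticality of the Hardy--Littlewood--Sobolev exponent $2_\mu^*$: at this exponent no smallness is automatic from Sobolev embedding, so the iteration cannot close unless one first extracts smallness from the Choquard term itself; this is precisely what the splitting trick provides, and is essentially the content of Lemma \ref{l6.1} in a disguised form (the $\varepsilon^2\int|\nabla u|^2$ contribution being the absorbable piece). A secondary technical point is the admissibility of truncated test functions in the weak formulation (Definition 2.1 is formulated only for $C_c^\infty$), which is handled routinely by density together with the standing bound $u \geq M\phi_\gamma$ that keeps the singular nonlinearity integrable against $H^1_0$ test functions.
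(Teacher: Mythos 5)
Your core mechanism for the critical Choquard term is sound and is essentially the paper's: the paper absorbs the nonlocal term $\int_\Om\left(\int_\Om |x-y|^{-\mu}u_\tau^{2_\mu^*}(y)dy\right)u_\tau^{2_\mu^*-1}u_\tau^{q-1}dx$ into the gradient term via the Moroz--Van Schaftingen inequality (Lemma \ref{l6.1}) and handles the overshoot set $\{u>\tau\}$ by Hardy--Littlewood--Sobolev (Proposition \ref{hls}) plus dominated convergence, whereas you extract smallness by splitting $u^{2_\mu^*}$ at a level $K$; as you yourself note, these are two implementations of the same absorption idea, and either one closes the iteration. The paper also stops the bootstrap once $u\in L^{q}(\Om)$ for $q$ up to $2n^2/((n-\mu)(n-2))$, which already makes the Riesz potential $\int_\Om u^{2_\mu^*}(y)|x-y|^{-\mu}dy$ bounded, rather than pushing to all $p<\infty$; that difference is cosmetic.

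The genuine gap is in your treatment of the singular term, at two places. First, with $\varphi=u\,u_L^{2(\beta-1)}$ the singular contribution is $\int_\Om\chi_{\{u<a\}}u^{1-\gamma}u_L^{2(\beta-1)}dx$, and since $u\sim\phi_\gamma\sim\delta^{2/(\gamma+1)}$ for $\gamma>1$ this integral diverges when $\gamma\geq 3$; Remark \ref{r2.4}, which you invoke, is stated (and true) only for $\gamma<3$, while Lemma \ref{l6.2} must cover every $\gamma>0$. Second, and more seriously, the concluding step ``the right-hand side of $(\mc P_\la)$ is in $L^p(\Om)$ for some $p>n/2$, hence elliptic regularity gives $L^\infty$'' fails: near $\partial\Om$ one has $\chi_{\{u<a\}}u^{-\gamma}\sim\delta^{-2\gamma/(\gamma+1)}$, which is not even in $L^1(\Om)$ for $\gamma>1$, and for $0<\gamma<1$ it lies in $L^p(\Om)$ only for $p<1/\gamma$, which exceeds $n/2$ only when $\gamma<2/n$. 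Your own opening observation --- that $\chi_{\{u<a\}}u^{-\gamma}$ vanishes wherever $u\geq\max\{a,1\}$ --- contains the fix, but your test functions never implement it. The paper does: composing with a smooth convex approximation $\Upsilon_\e$ of $t\mapsto(t-1)^+$ yields $-\De(u-1)^+\leq M\left(1+\left(\int_\Om |x-y|^{-\mu}u^{2_\mu^*}(y)dy\right)u^{2_\mu^*-1}\right)$ with no singular term at all, uniformly in $\gamma$, and the whole iteration (and the final elliptic estimate) is then run on $(u-1)^+$ rather than on $u$. Rebuilding your test functions on $(u-1)^+$, i.e.\ running your scheme on this differential inequality, removes both problems and makes the argument complete.
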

\begin{proof}
	Let $u$ be a nonnegative weak solution of $(\mc P_\la)$. Let $\Upsilon:\mb R \ra [0,1]$ be a $C^\infty(\mb R)$ convex increasing function such that $\Upsilon^\prime(t) \leq 1$ for all $t \in [0,1]$ and $\Upsilon^\prime(t)=1$ when $t \geq 1$. Define $\Upsilon_\epsilon(t)= \e \Upsilon(\frac{t}{\e})$. Then using the fact that $\Upsilon_\e$ is smooth, we obtain $\Upsilon_\e \ra (t-1)^+$ uniformly as $\e \ra 0$. It implies
	\begin{align*}
		-\De \Upsilon_\e(u) \leq& \Upsilon_\e^\prime(u)(-\De)u \leq \chi_{\{u>1\}}(-\De)u\\
		 \leq&\chi_{\{u>1\}} \left(\la\chi_{\{u<a\}}u^{-\gamma} + \la \left(\I\Om \frac{u^{2_\mu^\ast}(y)}{|x-y|^\mu}\right) u^{2_\mu^\ast-1}\right)\\
		\leq& M\left(1 +  \left(\I\Om \frac{ u^{2_\mu^\ast}(y)}{|x-y|^\mu}\right)  u^{2_\mu^\ast-1}(x)\right).
	\end{align*}
Hence, as $\e \ra 0$, we deduce that
\begin{align}\label{6.1}
	-\De (u-1)^+ \leq M \left(1 +  \left(\I\Om \frac{ u^{2_\mu^\ast}(y)}{|x-y|^\mu}\right)  u^{2_\mu^\ast-1}(x)\right).
\end{align}
For $\tau >0$, we define $u_\tau =\min\{u,\tau\}$.
Since $w=|u_\tau|^{q-2}u_\tau \in H_0^1(\Om)$ for $q \geq 2$, we can take it as a test function in \eqref{6.1}. Now
\begin{align}\label{6.2} \nonumber
	\frac{4(q-1)}{q^2} \I\Om | \na (u_\tau)^\frac{q}{2}|^2 dx =& (q-1) \I\Om |u_\tau|^{q-2} |\na u_\tau|^2dx\\
	\leq& \I\Om \na u \na w = \I{\{u \geq 1\}} \na (u-1)^+ \na w + \I{\{0\leq u\leq 1\}} \na u\na w.
\end{align}
Note that for any $\tau >1$, 
\begin{align}\label{6.3}
	 \I{\{0\leq u\leq 1\}} \na u\na w = (q-1) \I{\{0\leq u\leq 1\}}|u_\tau|^{q-2} |\na u_\tau|^2 \leq m_1 \|u\|^2 = m_2,
\end{align}
and $m_2$ is independent of $\tau$. Taking into account \eqref{6.1} and \eqref{6.3}, we obtain from \eqref{6.2}
\begin{align*}
		\frac{4(q-1)}{q^2} \I\Om | \na (u_\tau)^\frac{q}{2}|^2 dx \leq M\I\Om \I\Om \frac{u^{2_\mu^\ast}(y)}{|x-y|^\mu}dy u^{2_\mu^\ast-1}(x) u_\tau^{q-1}(x)dx +M \I\Om |u_\tau|^{q-1} dx +m_2.
\end{align*}
If $2 \leq s <\frac{2n}{n-\mu}$, using Lemma \ref{l6.1} with $\theta =\frac{2}{q}$, there exists $M_1 >0$ such that
\begin{align*}
	\I\Om \I\Om \frac{u_\tau^{2_\mu^\ast}(y)}{|x-y|^\mu}dy u_\tau^{2_\mu^\ast-1}(x) u_\tau^{q-1}(x)dx \leq \frac{2(q-1)}{Mq^2} \I \Om | \na (u_\tau)^\frac{q}{2}|^2 dx + M_1 \I \Om | u_\tau ^\frac{q}{2}|^2 dx.
\end{align*}
Since $u_\tau \leq u$, we have
\begin{align*}
		\frac{2(q-1)}{q^2} \I\Om | \na (u_\tau)^\frac{q}{2}|^2 dx \leq& M_1 \I\Om u^q dx + M\I{A_\tau}\I\Om \frac{u^{2_\mu^\ast-1}(y) u^{q-1}(y)}{|x-y|^\mu}dy u^{2_\mu^\ast-1}(x) u^{2_\mu^\ast}(x)dx\\
		&+ M \I\Om u^{q-1}dx +m_2,
\end{align*}
where $A_\tau =\{x \in \Om : u>\tau\}$.\\
Since $2 \leq q <\ds\frac{2n}{n-\mu}$, applying the Hardy-Littlewood-Sobolev inequality again, we have
\begin{align*}
	\I{A_\tau}\I\Om \frac{u^{2_\mu^\ast-1}(y) u^{q-1}(y)}{|x-y|^\mu}dy u^{2_\mu^\ast-1}(x) u^{2_\mu^\ast}(x)dx \leq M_2 \left(\I\Om | u^{2_\mu^\ast-1} u^{q-1}|^rdx\right)^\frac{1}{r} \left(\I{A_\tau} |u^{2_\mu^\ast}|^l\right)^\frac{1}{l},
\end{align*}
where $\ds\frac{1}{r} =1 -\frac{n-\mu}{2n} -\frac{1}{q}$ and $\ds\frac{1}{l} =\frac{n-\mu}{2n} +\frac{1}{q}$. By H\"{o}lder's inequality, if $u \in L^{q} (\Om)$, then $u^{2_\mu^\ast} \in L^l(\Om)$ and $|u|^{2_\mu^\ast-1}|u|^{q-1} \in L^r(\Om)$, whence by Lebesgue's dominated convergence theorem
\begin{align*}
	\lim\limits_{\tau \ra \infty} 	\I{A_\tau}\I\Om \frac{u^{2_\mu^\ast-1}(y) u^{q-1}(y)}{|x-y|^\mu}dy u^{2_\mu^\ast-1}(x) u^{2_\mu^\ast}(x)dx =0.
\end{align*}
Finally by Sobolev embedding theory, we obtain that there exists a constant $\hat{M}$, independent of $\tau$, such that
\begin{align*}
	\left(\I \Om |u_\tau|^\frac{qn}{n-2}\right)^{1-\frac{2}{n}} \leq M_1 \I\Om u^q dx + M \I\Om u^{q-1} +\hat{M}.
	\end{align*}
Letting $\tau \ra \infty$ we conclude that $u \in L^\frac{qn}{n-2}$. By iterating over $q$ a finite number of times we cover the range $q \in [2, \frac{2n}{n-\mu})$. So we get weak solution $u \in L^q (\Om)$ for every $q\in [2, \frac{2n^2}{(n-\mu)(n-2)}]$. Thus, $u^{2_\mu^\ast}\in L^q(\Om)$ for every $q \in [\frac{2(n-2)}{2n-\mu}, \frac{2n^2}{(n-\mu)(2n-\mu)})$. Since $\frac{2(n-2)}{2n-\mu}<\frac{n}{n-\mu}<\frac{2n^2}{(n-\mu)(2n-\mu)}$, we have
\begin{align*}
	\I\Om \frac{u^{2_\mu^\ast}}{|x-y|^\mu}dy \in L^\infty(\Om)
\end{align*}
and so from \cite[Theorem 1.16]{AM}, we have $(u-1)^+ \in L^\infty(\Om)$ which imply that $u \in L^\infty(\Om)$.
 \QED
\end{proof}
 
Finally, we give the proof of\\
\textbf{Proof of Theorem \ref{trr}:} We first prove the boundary behavior. For this, we see that $u$ is a supersolution for $(\mc S_{\la,\e})$ for any $\e$. Then by applying Theorem \ref{wcp}, we get $u_{\la,\e}\leq u$ a.e. in $\Om$. Furthermore, thanks to Lemma \ref{l6.2} we see that $u$ is a subsolution to the following problem
\begin{align*}
		-\De w =\la w^{-\gamma}+ K,~w>0~\text{in}~\Om,~w=0~\text{on}~\partial \Om,
	\end{align*}
 where $K =\la K_1 |u  |_\infty^{22_\mu^\ast-1}$ and $K_1 =\left|\ds \I\Om \frac{dy}{|x-y|^\mu}\right|_\infty$ and thus $u \leq w$ a.e. in $\Om$ i.e., we have $u_{\la,\e} \leq u\leq w$ a.e. in $\Om$. Now since both $u_{\la,\e} \sim \phi_\gamma$ and $w\sim \phi_\gamma$, we have $u \sim \phi_\gamma$. Finally, the proof of H\"{o}lder continuity results follows directly from Lemma \ref{6.2} and \cite[Theorem 1.2]{GL}. \QED
		\textbf{Acknowledgement:} The first author thanks the CSIR(India) for financial support in the form of a Senior Research Fellowship, Grant Number $09/086(1406)/2019$-EMR-I. The second author is partially funded by IFCAM (Indo-French Centre for Applied Mathematics) IRL CNRS 3494.


\begin{thebibliography}{99}
			\bibitem{AG} Adimurthi, J. Giacomoni, {\it Multiplicity of positive solutions for a singular and critical elliptic problem in $\mb R^2$}
		, Commun.
		Contemp. Math. 8(5) (2006), 621–656.
		
%
		
		\bibitem{Alb} C. O. Alves, A. M Bertone, {\it A discontinuous problem involving the $p$-Laplacian operator and
			critical exponent in $\mb R^N$}, Electron. J. Differential Equation (2003) 2003, 1–10.
		
		\bibitem{AM}  A. Ambrosetti, A. Malchiodi, {\it Nonlinear Analysis and Semilinear Elliptic Problems,} Cambridge Stud. Adv. Math., vol. 104,
		Cambridge University Press, Cambridge, 2007.		
		
		%
		\bibitem{BT} M. Badiale and G. Tarantello, {\it Existence and multiplicity for elliptic problems with
			critical growth and discontinuous nonlinearities,} Nonlinear Analysis, Vol. 29, (1997),
		639-677.
		\bibitem{Ch} K. C. Chang, {\it Variational methods for nondifferentiable functionals and their applications to partial differential equations}, J.  Math. Anal. Appl., 80 (1981), 102-129.
		
		\bibitem{Ch1}  K. C. Chang, {\it The obstacle problem and partial differential equations with discontinuous
			nonlinearities}, Commun. Pure Appl. Math., 33 (1980), 117–146.
		\bibitem{Ch2} K. C. Chang, {\it On the multiple solutions of the elliptic differential equations with discontinuous
			nonlinear terms}, Sci. Sin. 21 (1978), 139–158.
		
		\bibitem{Cl} F. H. Clarke, {\it Generalized gradients and applications,} Trans. Amer.
		Math. Soc., 205 (1975), 247-262.
		
		
		
		%
		%
		%
		
		
				\bibitem{CMSS}  A. Canino, L. Montoro, B. Sciunzi, M. Squassina, {\it Nonlocal problems with singular nonlinearity,} Bull. Sci. Math. 141
				(2017), 223-250.
		
		%
		
		
		\bibitem{CRT}M. B. Crandall, P. H. Rabinowitz, L. Tartar, {\it On a Dirichlet problem with a singular
			nonlinearity,} Communications in Partial Differential Equations 2 (1977), 193-222.
		
		
		
		
		\bibitem{DPST} R. Dhanya, S. Prashanth, K. Sreenadh, S. Tiwari, {\it Critical growth elliptic problem in $\mb R^2$ with singular discontinuous
			nonlinearities,} Adv. Differential Equations 19 (2014), 409–440.
		
		\bibitem{DPST1} R. Dhanya, S. Prashanth, K. Sreenadh, S. Tiwari, {\it Elliptic problems in $\mb R^n$ with critical and singular discontinuous nonlinearities}, Complex Var. Elliptic Equ. 61 (12) (2016), 1668-1688.
		
		
		
		%
		
		
		\bibitem{GY} F. Gao, M. Yang, {\it On the Brezis-Niremberg type critical problem for nonlinear Choquard equation}, Sci. China Math. 61 (2018), 1219-1242.
		
		
		
		%
		\bibitem{GR} M. Ghergu, V. R\v{a}dulescu, {\it Singular Elliptic Problems: Bifurcation and Asymptotic Analysis,} Oxford Lecture Series in
		Mathematics and its Applications, vol. 37, The Clarendon Press, Oxford University Press, Oxford, 2008.
		
		%
		\bibitem{GGS1}J. Giacomoni, D. Goel, K. Sreenadh, {\it Singular doubly nonlocal elliptic problems with Choquard type critical growth nonlinearities}, J. Geom. Anal. 31(5) (2021), 4492–4530.
		
		%
		\bibitem{GMS1}  J. Giacomoni, T. Mukherjee, K. Sreenadh, {\it Doubly nonlocal system with Hardy-Littlewood-Sobolev critical nonlinearity,}
		J. Math. Anal. Appl. 467 (2018) 638–672.
		%
		
		\bibitem{GS}D. Goel, K. Sreenadh, {\it Kirchhoff equations with Hardy-Littlewood-Sobolev critical nonlinearity},
		Nonlinear Anal. 186 (2019), 162–186. 
		\bibitem{GL} C. Gui, F. H. Lin, {\it Regularity of an elliptic problem with a singular nonlinearity}, Proc.Roy. Soc. Edinburgh Sect. A 123 (1993), no. 6, 1021-1029.
		
		\bibitem{H} Y. Haitao, {\it Multiplicity and asymptotic behavior of positive solutions for a singular semilinear elliptic
			problem}, J. Differ. Equ. 189 (2003), 487–512.
		
		\bibitem{HaS} Y. Haitao, W. Shaoping, {\it An elliptic problem with discontinuous sublinear nonlinearities in $\mb R^N$},
		Nonlinear Anal. 51 (2002), 921–939.
		
		\bibitem{HM} J. Hern\'{a}ndez, F. J. Mancebo, {\it Singular elliptic and parabolic equations}, Handbook of Differential Equations 3 (2006),
		317-400.
		
		\bibitem{HSS}N. Hirano, C. Saccon, N. Shioji, {\it Brezis–Nirenberg type theorems and multiplicity of positive solutions for a singular
			elliptic problem,} J. Differential Equations 245 (2008), 1997–2037.
		
		\bibitem{KS}D. Kinderlehrer and G. Stampacchia, {\it An introduction to variational inequalities
			and their applications, volume 88 of Pure and Applied Mathematics}, Academic Press,
		Inc. [Harcourt Brace Jovanovich, Publishers], New York-London, 1980.
		
		
		\bibitem{Li} E. H. Lieb, {\it Existence and uniqueness of the minimizing solution of Choquards nonlinear
			equation,} Studies in Applied Mathematics 57(2) (1976/77), 93-105.
		
		
		\bibitem{MS4}V. Moroz, J. Van Schaftingen, {\it A guide to the Choquard equation,} Journal of Fixed
		Point Theory and Applications 19(1) (2017), 773-813.
		
		\bibitem{MS1} V. Moroz, J. Van Schaftingen, {\it Existence of groundstates for a class of nonlinear Choquard equations,} Trans. Amer. Math.
		Soc. 367 (2015) 6557–6579.
		
		%
		
		
		\bibitem{Pe}	S. Pekar, Untersuchung uber\"{d}ie Elektronentheorie der Kristalle, Akademie Verlag,
		Berlin, 1954.
		
		\bibitem{MiS} M. Struwe, {\it Variational Methods : Applications to nonlinear Partial Differential
			Equations and Hamiltonian systems,} Springer Verlag, 1st edition, (1990).
		%
		
		\bibitem{W}  M. Willem, {\it Minimax Theorems,} Birkh\"{a}user, Boston, MA, 1996.
		\end{thebibliography}
	\end{document}